\pgfplotsset{compat=1.15}
\tikzset{individu/.style={draw,thick}}
\theoremstyle{plain}
\newtheorem{theorem}{Theorem}
\newtheorem{corollary}[theorem]{Corollary}
\newtheorem{lemma}[theorem]{Lemma}
\newtheorem{proposition}[theorem]{Proposition}
\theoremstyle{definition}
\newtheorem{definition}[theorem]{Definition}
\theoremstyle{remark}
\newtheorem{remark}[theorem]{Remark}
\newcommand{\Z}{\mathbb{Z}}
\newcommand{\R}{\mathbb{R}}
\newcommand{\D}{\mathbb{D}}
\newcommand{\calL}{\mathcal{L}}
\newcommand{\calB}{\mathcal{B}}
\newcommand{\calW}{\mathcal{W}}
\newcommand{\calM}{\mathcal{M}}
\newcommand{\cW}{\mathcal{W}}
\newcommand{\indset}[1]{\mathbf{1}_{\left\{#1\right\}}}
\newcommand{\ind}[1]{\mathbf{1}_{#1}}
\DeclareMathOperator{\E}{\mathbb{E}}
\renewcommand{\P}{\mathbb{P}}
\DeclareMathOperator{\Walks}{Walks}
\DeclareMathOperator{\Bin}{Bin}
\renewcommand{\d}{\mathrm{d}}
\newcommand{\calP}{\mathcal{P}}
\renewcommand{\tilde}[1]{\widetilde{#1}}
\renewcommand{\epsilon}{\varepsilon}
\newcommand{\Addresses}{{
  \bigskip
  \footnotesize

  \textsc{Université Paris-Saclay, Laboratoire de mathématiques d’Orsay, 91405, Orsay, France.}\par\nopagebreak
  \textit{E-mail address}: \texttt{paul.melotti at universite-paris-saclay.fr}

 \medskip

  \textsc{Université Paris-Saclay, CNRS, CEA, Institut de physique théorique, 91191 Gif-sur-Yvette, France}\par\nopagebreak
  \textit{E-mail address}: \texttt{sanjay.ramassamy at ipht.fr}

 \medskip

  \textsc{Department of Mathematics, Uppsala University, PO Box 480,
SE-751~06 Uppsala, Sweden}\par\nopagebreak
  \textit{E-mail address}: \texttt{paul.thevenin at math.uu.se}

}}
\title{Points and lines configurations for perpendicular bisectors of convex cyclic polygons}
\author{Paul Melotti \and Sanjay Ramassamy \and Paul Th\'evenin}
\date{\today}
\begin{document}

\maketitle

\begin{abstract}
We characterize the topological configurations of points and lines
that may arise when placing $n$ points on a circle and drawing the $n$
perpendicular bisectors of the sides of the corresponding convex cyclic $n$-gon. We
also provide exact and asymptotic formulas describing a random realizable configuration, obtained either by sampling the points uniformly at random on the circle or by sampling a realizable configuration uniformly at random.
\end{abstract}

\section{Introduction}
\label{sec:intro}

Let $n\geq3$ and let $P_1,\ldots,P_n$ be $n$ distinct points on the
unit circle, arranged in positive cyclic order. For all $i$ between $1$ and $n$, denote by $L_i$ the perpendicular bisector of the
segment $[P_i,P_{i+1}]$, with indices taken modulo $n$. These $n$ lines all go through the center of the circle. We assume that
the points are in \emph{generic position}, which implies in particular that these lines are all distinct and that no point lies on a line. Hence, the $n$ lines divide the plane into $2n$ regions. What are the circular arrangements of points in the $2n$ regions that can be
realized?

Our interest for this question comes from our work~\cite{MRT} on the flip property for $s$-embeddings, a geometric integrable system. We were led to consider configurations of three hyperbola branches $B_1$, $B_2$, $B_3$ and three points $P_1$, $P_2$, $P_3$ such that the foci of $B_i$ are $P_{i-1}$ and $P_{i+1}$, with $1\leq i \leq 3$ and indices taken modulo $3$. In the limit when the hyperbola branches degenerate to the perpendicular bisectors of their foci, we recover the problem described in the previous paragraph with $n=3$.

The rest of this introduction consists in the presentation of our results and is organized as follows. In Subsection~\ref{subsec:deterministic} we answer the question from the first paragraph by characterizing the topological configurations of points and lines arising from the $n$ perpendicular bisectors of a convex cyclic $n$-gon. We also enumerate such configurations. A natural question to ask is what a typical realizable configuration looks like. We tackle this question under two angles: configurations coming from $n$ uniform random points on the circle (Subsection~\ref{subsec:uniformpoints}) or configurations chosen uniformly at random among all realizable configurations (Subsection~\ref{subsec:uniformbracelets}). In Subsection~\ref{subsec:discussion} we compare these two approaches and we state some open questions.

\subsection{Deterministic results}
\label{subsec:deterministic}

We first characterize the realizable configurations. We number in counterclockwise order the $2n$ regions defined by the perpendicular bisectors, the first one being the region that contains $1$. For every $1\leq i \leq 2n$, we set $v_i$ to be the number of points inside the $i$th region. It is not hard to see that each region contains at most one point, since two consecutive points are separated by the perpendicular bisector of the segment connecting them. The word $\underline{v}=(v_1,\ldots,v_{2n})\in\{0,1\}^{2n}$ is called the
\emph{occupancy word} of the collection of points $P_1,\ldots,P_n$. In order to characterize the occupancy words that
may arise as one lets the positions of the points vary, we introduce the notion of
\emph{signature} of a word in $\{0,1\}^{2n}$. If $\underline{v}=(v_1,\ldots,v_{2n})\in\{0,1\}^{2n}$ is an arbitrary word, its signature $\underline{\sigma}=(\sigma_1,\ldots,\sigma_{n})\in\{0,1,2\}^{n}$ is defined by $\sigma_i=v_i+v_{i+n}$ for every $1\leq i \leq n$. See an example in Figure~\ref{fig:ex}.

\begin{figure}[h]
  \centering
  
\definecolor{qqzzqq}{rgb}{0.,0.6,0.}
\definecolor{zzttqq}{rgb}{0.6,0.2,0.}
\definecolor{qqccqq}{rgb}{0.,0.8,0.}
\definecolor{ffffff}{rgb}{1.,1.,1.}
\begin{tikzpicture}[line cap=round,line join=round,>=triangle 45,x=1.0cm,y=1.0cm]
\clip(3,-5) rectangle (11,3);
\draw [line width=1.pt] (6.98,-1.22) circle (3.540903839417274cm);
\draw [line width=1.pt,domain=3:11.48] plot(\x,{(--15.038779343315106-2.1609954882560247*\x)/0.036859971075367426});
\draw [line width=1.pt,domain=-4.3:22.48] plot(\x,{(--11.814498946732883-1.987984568659515*\x)/1.6898633955004378});
\draw [line width=1.pt,domain=-4.3:22.48] plot(\x,{(-0.010610122873524475-0.3781423037709781*\x)/2.172166723930288});
\draw [line width=1.pt,domain=-4.3:22.48] plot(\x,{(-5.6971218567365085--0.5610480277447114*\x)/1.4598414943265776});
\draw [line width=1.pt,domain=-4.3:22.48] plot(\x,{(-3.71889073704037--0.43928660684697984*\x)/0.5349755911872531});
\draw [line width=1.pt,domain=-4.3:22.48] plot(\x,{(-7.468600476095798--0.9513811944655446*\x)/0.6786555235461442});
\draw [line width=1.pt,domain=-4.3:22.48] plot(\x,{(-10.58523073838803--1.4540954069151653*\x)/0.3570859000985047});
\draw [line width=1.pt,domain=-4.3:22.48] plot(\x,{(-18.438212479438555--2.876520021523472*\x)/-1.3441780908158032});
\draw [line width=1.pt,domain=-4.3:22.48] plot(\x,{(--19.0653881205248-1.7552088968093553*\x)/-5.58527050884877});

\draw [fill=black] (8.00299060971339,2.1699100596384886) circle (2.5pt);
\draw[color=black] (8.19,2.48) node {$P_1$};
\draw [fill=black] (5.841995121457365,2.133050088563121) circle (2.5pt);
\draw[color=black] (6.03,2.46) node {$P_2$};
\draw [fill=black] (3.85401055279785,0.4431866930626833) circle (2.5pt);
\draw[color=black] (3.75,0.76) node {$P_3$};
\draw [fill=black] (3.4758682490268717,-1.7289800308676047) circle (2.5pt);
\draw[color=black] (3.67,-1.4) node {$P_4$};
\draw [fill=black] (4.036916276771583,-3.1888215251941823) circle (2.5pt);
\draw[color=black] (4.23,-2.86) node {$P_5$};
\draw [fill=black] (4.476202883618563,-3.7237971163814354) circle (2.5pt);
\draw[color=black] (4.67,-3.4) node {$P_6$};
\draw [fill=black] (5.427584078084108,-4.40245263992758) circle (2.5pt);
\draw[color=black] (5.61,-4.08) node {$P_7$};
\draw [fill=black] (6.881679484999273,-4.759538540026084) circle (2.5pt);
\draw[color=black] (6.77,-4.44) node {$P_8$};
\draw [fill=black] (9.758199506522745,-3.415360449210281) circle (2.5pt);
\draw[color=black] (9.59,-3.06) node {$P_9$};
\draw [fill=ffffff] (5.957009390286611,-4.609910059638489) circle (2.5pt);
\draw [fill=ffffff] (8.118004878542635,-4.573050088563122) circle (2.5pt);
\draw [fill=ffffff] (10.105989447202152,-2.883186693062683) circle (2.5pt);
\draw [fill=ffffff] (10.484131750973129,-0.7110199691323953) circle (2.5pt);
\draw [line width = 1pt] (10.504131750973129,-1.1510199691323953) -- (10.604131750973129,-1.1510199691323953) ;
\draw (10.604131750973129,-1.1510199691323953) node [right] {$1$};
\draw [fill=ffffff] (9.923083723228418,0.7488215251941823) circle (2.5pt);
\draw [fill=ffffff] (9.483797116381439,1.2837971163814355) circle (2.5pt);
\draw [fill=ffffff] (8.532415921915893,1.9624526399275797) circle (2.5pt);
\draw [fill=ffffff] (7.078320515000728,2.3195385400260844) circle (2.5pt);
\draw [fill=ffffff] (4.201800493477256,0.9753604492102812) circle (2.5pt);
\end{tikzpicture}

  \caption{An example with $n=9$. The black dots correspond to the points $P_1,\dots,P_9$ and the white dots correspond to the antipodes of the black dots.
    Here $\underline{v} = (0,0,0,0,1,0,1,0,1,1,0,1,1,1,1,0,0,1)$ is the occupancy word (which counts the number of black dots in each region) and its signature is $\underline{\sigma} = (1,0,1,1,2,1,1,0,2)$. The signature counts the number of black and white dots in each region.}
  \label{fig:ex}
\end{figure}

We now introduce a notion of discrete circular interval. Let $N\geq1$ be an integer and let $1\leq i,j\leq N$ be two integers. We define
\[
I_N(i,j)=\begin{cases}
\{i+1,i+2,\ldots,j-1\} &\text{ if } i< j \\
\{1,2,\ldots,j-1\}\cup \{i+1,i+2,\ldots,N\} &\text{ if } i\geq j. \\
\end{cases}
\]
In particular when $i=j$ we have $I_N(i,i)=\{1,\ldots,N\}\setminus\{i\}$.

A word $\underline{s}=(s_1,\ldots,s_{n})\in\{0,1,2\}^{n}$ is called \emph{interlacing} if it satisfies the following two properties:
\begin{enumerate}
 \item there exist two integers $1\leq i,j\leq n$ such that $s_i=0$ and $s_j=2$ ;
 \item for every pair of integers $(i,j)$ with $1\leq i,j\leq n$ such that $s_i=s_j=0$ and $s_k\neq0$ for all $k\in I_{n}(i,j)$, there exists a unique $k_0\in I_{n}(i,j)$ such that $s_{k_0}=2$.
\end{enumerate}

Note that an interlacing word takes the values $0$ and $2$ an equal number of times. We can now characterize the words that may arise as
the occupancy word of some collection of points $P_1,\ldots,P_n$. We call such words \emph{realizable}.

\begin{theorem}
\label{thm:main}
A word $\underline{u}=(u_1,\ldots,u_{2n})\in\{0,1\}^{2n}$ is realizable if and only if its signature is interlacing.
\end{theorem}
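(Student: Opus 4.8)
The plan is to prove the two implications separately, using the reformulation of the signature in terms of the ``white dots'' $\overline{P_i}$ (the antipodes of the $P_i$) already visible in Figure~\ref{fig:ex}: since the whole picture (points, antipodes, and the $n$ lines) is invariant under the central symmetry, region $i$ and region $i+n$ always contain the same number of dots, and this common number equals $v_i+v_{i+n}=\sigma_i$. Thus the signature records, for each of the $2n$ sectors, how many of the $2n$ dots (black $P_i$'s and white $\overline{P_i}$'s) it contains, and the goal becomes to show that this dot-count word has alternating $0$'s and $2$'s exactly when it is realizable.

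For the forward implication, write $P_i$ at angle $\theta_i$ with $\theta_1<\dots<\theta_n<\theta_1+2\pi$, so that $L_i$ is the line through the origin at angle $\alpha_i=(\theta_i+\theta_{i+1})/2$, cutting the circle at the rays $\alpha_i$ and $\alpha_i+\pi$. I set $S=\bigcup_{i=1}^n(\theta_i,\alpha_i)\subseteq\R/2\pi\Z$ (the ``first half'' of each arc between consecutive points) and consider $h(\phi)=\mathbf 1_{S}(\phi)+\mathbf 1_{S+\pi}(\phi)\in\{0,1,2\}$. As $\phi$ increases, $h$ jumps up by $1$ at each dot (black dots are the left endpoints of the arcs of $S$, white dots those of $S+\pi$) and down by $1$ at each ray (primary rays $\alpha_i$ are the right endpoints of arcs of $S$, antipodal rays those of $S+\pi$); hence the number of dots in a sector is the number of up-steps of $h$ between the two bounding rays. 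Reading the value of $h$ just after each ray yields a cyclic word $x\in\{0,1\}^{2n}$, and a sector is empty exactly at a descent $1\to0$ of $x$ and doubly occupied exactly at an ascent $0\to1$. Since ascents and descents of a cyclic binary word alternate, the empty and doubly occupied sectors alternate, which is property~(2) of interlacing. Finally $h$ cannot be constant equal to $1$: that would force $S+\pi=S^{c}$, hence equality of boundary sets $\{\theta_i\}\cup\{\alpha_i\}=\{\theta_i+\pi\}\cup\{\alpha_i+\pi\}$, so some white dot would coincide with a black dot or lie on some line $L_j$, contradicting genericity; this gives property~(1).

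For the converse I would turn the target occupancy word into a feasibility problem for the angles. From $\underline u$ (and the central symmetry) one reads off the desired cyclic sequence of the $2n$ dots and $2n$ rays around the circle, together with which dots are black and which white; the constraints the configuration must satisfy are then linear in the unknowns $\theta_1,\dots,\theta_n$: the ordering $\theta_1<\dots<\theta_n$, the antipodal relations placing each white dot $\theta_i+\pi$ in its prescribed sector, and above all the midpoint conditions $\alpha_i=(\theta_i+\theta_{i+1})/2$ forcing each primary ray into its prescribed arc. The aim is to show that interlacing is exactly what makes this system of linear inequalities feasible. Concretely I plan to argue by induction on $n$: remove a well-chosen point (for instance one sitting in a doubly occupied sector) so that the reduced word of length $2(n-1)$ is again interlacing, realize it by the induction hypothesis, and reinsert the point very close to a neighbour, using the one remaining degree of freedom (the insertion angle) to place the new black dot in the prescribed sector—thereby also realizing, one at a time, the free bit attached to each sector with $\sigma_i=1$.

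The main obstacle is this converse, and within it two points require real care. First, one must check that a point can always be removed so as to keep the signature interlacing, and conversely that reinserting it perturbs the bisectors $L_{i-1},L_i$ so little that no other dot crosses a ray, so that only the intended sector's occupancy changes; this is where the precise local geometry of the arrangement near the inserted point has to be controlled. Second, one must verify that \emph{every} occupancy word sharing a fixed interlacing signature is attainable, i.e. that the $2^{\#\{i:\sigma_i=1\}}$ independent choices (black versus white dot in each singly occupied sector) can all be realized; I expect the insertion step to supply exactly one such binary choice at each stage, but making this independence rigorous is the delicate part of the argument.
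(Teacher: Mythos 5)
Your necessity argument (realizable $\Rightarrow$ interlacing) is mostly sound and takes a genuinely different route from the paper's: where the paper uses the ``looking'' directions $D(q)\in\{L,R\}$ and the pattern Lemmas~\ref{lem:match}, \ref{lem:green}, \ref{lem:red} and~\ref{lem:interlacing}, your step function $h=\mathbf{1}_S+\mathbf{1}_{S+\pi}$ and the alternation of ascents and descents of the cyclic binary word $x$ give property (2) in one stroke, and that part is correct. But your proof of property (1) fails. The negation of property (1) is that \emph{every} sector contains exactly one dot, i.e.\ that $x$ is constant; it is \emph{not} that $h\equiv 1$. Since $h$ jumps up at each of the $2n$ dots and down at each of the $2n$ rays (all distinct by genericity), $h$ is never constant, so ruling out ``$h\equiv 1$'' rules out nothing: if $x\equiv 1$ then $h$ alternates between $1$ and $2$, if $x\equiv 0$ it alternates between $0$ and $1$, and in neither case is $S+\pi=S^{c}$, so your genericity contradiction never applies. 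A short correct fix is a measure count: $|S|=|S+\pi|=\pi$, hence $\int_0^{2\pi}h=2\pi$; but $x\equiv 1$ forces $h\geq 1$ everywhere with $h=2$ on a set of positive measure, giving $\int h>2\pi$, and $x\equiv 0$ symmetrically gives $\int h<2\pi$. Alternatively, argue as in the paper's Lemma~\ref{lem:existgreen}: the black--white pair at minimal circular distance is separated by no bisector, hence lies in a single sector, which is therefore of type $2$; equality of the numbers of ascents and descents in the cyclic word $x$ then supplies a type-$0$ sector.

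The larger gap is the sufficiency direction (interlacing $\Rightarrow$ realizable): what you offer is a plan, not a proof, and you say so yourself. None of the three essential steps is carried out: (a) that one can always delete a point so that the reduced word of length $2(n-1)$ is still interlacing and compatible with the later insertion; (b) that reinserting a point near a neighbour changes only the intended sector's occupancy --- note that an insertion replaces one line by two (hence two rays by four) and adds a dot and its antipode, so the sector membership of every dot has to be re-examined; and (c) that all $2^{\#\{i:\sigma_i=1\}}$ occupancy words sharing a given interlacing signature are attained, i.e.\ that the single angular degree of freedom per insertion realizes each prescribed bit. These points are exactly where the content of the theorem lies. The paper (Proposition~\ref{prop:suff}) avoids induction entirely by one explicit construction: type-$2$ indices are placed at positions $k/s$, type-$0$ regions at the midpoints $(2k-1)/(2s)$, the intermediate points at geometric offsets $\eta(2^{j}-1)$ chosen so that each point ``looks'' toward its nearest type-$2$ position; the positions of all $2n$ bisectors are then verified via Lemma~\ref{lemma:m}, and a final small perturbation restores genericity. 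Until your steps (a)--(c) are actually proved, the converse --- and hence the theorem --- remains unestablished.
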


In order to get rid of the arbitrary choice of the position on the circle at which we start reading the word, as well as the choice of an orientation of the circle, we will consider \emph{bracelets}, which are equivalence classes of words considered up to cyclic shifts and reversal (see e.g. \cite{HP}). We denote by $\calB_n$ (resp.~$\calW_n$) the set of realizable bracelets (resp.~words) of length $2n$.

Theorem~\ref{thm:main} seems to be new even in the case $n=3$. We can state a finer version of this result in that case. If $A$ and $B$ are two points in the plane, we denote by $|AB|$ the Euclidean distance between $A$ and $B$.

\begin{figure}[h]
  \centering
\definecolor{ttzzqq}{rgb}{0.2,0.6,0.}
\definecolor{zzttqq}{rgb}{0.6,0.2,0.}
\definecolor{wqwqwq}{rgb}{0.3764705882352941,0.3764705882352941,0.3764705882352941}
\begin{tikzpicture}[line cap=round,line join=round,>=triangle 45,x=1.0cm,y=1.0cm]
\clip(5.2276601546797,-2.589622005417789) rectangle (10.952017512688182,2.3248045889365323);
\draw [line width=1.pt,color=wqwqwq,domain=5.2276601546797:11.652017512688182] plot(\x,{(-24.2774--3.*\x)/0.02});
\draw [line width=1.pt,color=wqwqwq,domain=4.2276601546797:12.652017512688182] plot(\x,{(--10.6326-1.08*\x)/-3.22});
\draw [line width=1.pt,color=wqwqwq,domain=4.2276601546797:12.652017512688182] plot(\x,{(--13.6448-1.92*\x)/3.2});
\draw [line width=1.pt] (6.58,-1.86)-- (9.58,-1.88);
\draw [line width=1.pt] (9.58,-1.88)-- (8.5,1.34);
\draw [line width=1.pt] (8.5,1.34)-- (6.58,-1.86);

\draw [fill=black] (6.58,-1.86) circle (2.5pt);
\draw[color=black] (6.628183679918175,-1.739352532211187) node [below left] {$A$};
\draw [fill=black] (9.58,-1.88) circle (2.5pt);
\draw[color=black] (9.527229156489566,-1.7586596919316038) node [below right] {$B$};
\draw [fill=black] (8.5,1.34) circle (2.5pt);
\draw[color=black] (8.546028212146233,1.4592002614711828) node [above] {$C$};
\end{tikzpicture}
\definecolor{ttzzqq}{rgb}{0.2,0.6,0.}
\definecolor{zzttqq}{rgb}{0.6,0.2,0.}
\definecolor{wqwqwq}{rgb}{0.3764705882352941,0.3764705882352941,0.3764705882352941}
\begin{tikzpicture}[line cap=round,line join=round,>=triangle 45,x=1.0cm,y=1.0cm]
\clip(5.2276601546797,-2.589622005417789) rectangle (10.952017512688182,2.3248045889365323);
\draw [line width=1.pt,color=wqwqwq,domain=5.2276601546797:11.652017512688182] plot(\x,{(-24.2774--3.*\x)/0.02});
\draw [line width=1.pt,color=wqwqwq,domain=4.2276601546797:12.652017512688182] plot(\x,{(--10.6326-1.08*\x)/-3.22});
\draw [line width=1.pt,color=wqwqwq,domain=4.2276601546797:12.652017512688182] plot(\x,{(--13.6448-1.92*\x)/3.2});

\draw [fill=black] (6.58,-1.86) circle (2.5pt);
\draw[color=black] (6.628183679918175,-1.739352532211187) node [below left] {$A$};
\draw [fill=black] (9.78,-0.98) circle (2.5pt);
\draw[color=black] (9.727229156489566,-0.9586596919316038) node [below right] {$B$};
\draw [fill=black] (7.0,1.34) circle (2.5pt);
\draw[color=black] (7.046028212146233,1.4592002614711828) node [above] {$C$};
\end{tikzpicture}
  \caption{For $n=3$, there is only one realizable bracelet, shown on the left. The topological configuration
    on the right is impossible to achieve with the lines being
    the perpendicular bisectors of the segments.}
  \label{fig:triangle}
\end{figure}

\begin{proposition}
\label{prop:3}
Let $A,B,C$ be three points in the plane with $|AB|<|BC|<|CA|$. Then the three perpendicular bisectors of the triangle $ABC$ divide the plane into six regions satisfying the following properties:
\begin{itemize}
 \item $A$ and $B$ lie in two consecutive regions;
 \item the regions containing $B$ and $C$ are separated by one empty region;
 \item the regions containing $C$ and $A$ are separated by two empty regions.
\end{itemize}
In particular, the equivalence class of $(0,1,0,0,1,1)$ is the only realizable bracelet for $n=3$.
\end{proposition}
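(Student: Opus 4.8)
The plan is to convert the metric hypothesis $AB<BC<CA$ into an ordering of the three arcs that the triangle cuts on its circumcircle, and then to determine the cyclic order of the six rays of the perpendicular bisectors by a direct angular computation.

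First I would observe that $AB<BC<CA$ forces the triangle to be scalene, so no vertex is equidistant from the other two and hence no vertex lies on any of the three bisectors; in particular the configuration is generic in the sense of the introduction, the three bisectors are distinct lines through the circumcenter $O$, and they cut the plane into six sectors. Using that in a triangle the larger side faces the larger angle, the hypothesis is equivalent to $\angle C<\angle A<\angle B$. Denoting by $\alpha,\beta,\gamma$ the measures of the arcs $AB$, $BC$, $CA$ taken in the positive cyclic order $A,B,C$ (so that $\alpha+\beta+\gamma=2\pi$), the inscribed angle theorem gives $\alpha=2\angle C$, $\beta=2\angle A$ and $\gamma=2\angle B$; thus the hypothesis becomes simply $\alpha<\beta<\gamma$.

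Next I would put $O$ at the origin and place $A,B,C$ on the circumcircle at angles $0$, $\alpha$, $\alpha+\beta$. Since the perpendicular bisector of a chord passes through $O$ and the midpoint of the subtended arc, the lines $L_{AB},L_{BC},L_{CA}$ emanate six rays from $O$: the three \emph{near} rays at angles $\alpha/2$, $\alpha+\beta/2$, $2\pi-\gamma/2$, and their three antipodal \emph{far} rays $\pi+\alpha/2$, $\pi+\alpha+\beta/2$, $\pi-\gamma/2$. The six sectors are the gaps between consecutive rays, and two vertices lie in regions separated by exactly $k-1$ empty regions, where $k$ is the number of rays strictly between their angular positions. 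The statement therefore reduces to counting, for each of the three vertex-gaps $(0,\alpha)$, $(\alpha,\alpha+\beta)$ and $(\alpha+\beta,2\pi)$, how many rays it contains; emptiness of the intermediate regions is automatic, the only vertices sitting at $0,\alpha,\alpha+\beta$.

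Finally I would carry out this count, each inclusion test being settled by one of the inequalities $\alpha<\beta<\gamma$. The gap $(0,\alpha)$ contains only $\alpha/2$, giving $k=1$ and hence $A,B$ in consecutive regions. The gap $(\alpha,\alpha+\beta)$ contains the near ray $\alpha+\beta/2$ of $L_{BC}$ and the far ray $\pi-\gamma/2=(\alpha+\beta)/2$ of $L_{CA}$, the latter belonging to it precisely because $\alpha<\beta$; this gives $k=2$, so $B,C$ are separated by one empty region. The gap $(\alpha+\beta,2\pi)$ contains the near ray $2\pi-\gamma/2$ of $L_{CA}$ together with the far rays $\pi+\alpha/2$ of $L_{AB}$ and $\pi+\alpha+\beta/2$ of $L_{BC}$, which land there exactly because $\beta<\gamma$ and $\alpha<\gamma$ respectively; this gives $k=3$, so $C,A$ are separated by two empty regions. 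The only genuinely delicate point — and the sole place where the hypothesis is used — is this bookkeeping of where the three antipodal far rays fall, which is exactly what produces the asymmetric $0$–$1$–$2$ pattern of empty regions asserted by the proposition.
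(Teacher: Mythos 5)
Your proof is correct, and there is in fact no proof in the paper to compare it with: the authors explicitly omit the proof of Proposition~\ref{prop:3}, leaving it as an exercise, so your argument fills that omission. Your route --- translating $AB<BC<CA$ into the arc inequalities $\alpha<\beta<\gamma$ via the side--angle correspondence and the inscribed angle theorem, then locating the six rays of the bisectors at $\alpha/2$, $\alpha+\beta/2$, $2\pi-\gamma/2$ and their antipodes, and counting how many fall in each of the three vertex gaps --- is sound, and I verified the three decisive inclusions: the far ray $\pi-\gamma/2=(\alpha+\beta)/2$ of $L_{CA}$ lies in $(\alpha,\alpha+\beta)$ iff $\alpha<\beta$; the far ray $\pi+\alpha/2$ of $L_{AB}$ lies in $(\alpha+\beta,2\pi)$ iff $\beta<\gamma$; and the far ray $\pi+\alpha+\beta/2$ of $L_{BC}$ lies in $(\alpha+\beta,2\pi)$, rather than wrapping around into $(0,\alpha)$, iff $\alpha<\gamma$. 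Two steps are left implicit but are harmless: since you exhibit $1+2+3=6$ rays in the three disjoint gaps and there are only six rays in total, the counts are automatically exact (this also justifies the assertion that $(0,\alpha)$ contains \emph{only} $\alpha/2$); and you tacitly assume $A,B,C$ are in positive cyclic order, which is achieved by a reflection under which all three conclusions are invariant. It is worth noting how your argument relates to the paper's general machinery: the tools of Section~\ref{sec:charac} (Lemmas~\ref{lem:match}, \ref{lem:green} and~\ref{lem:red}) show that for $n=3$ the only realizable bracelet is the one on the left of Figure~\ref{fig:triangle}, but Theorem~\ref{thm:main} says nothing about \emph{which} vertex sits where; your computation proves the finer statement that the adjacency pattern is dictated by the ordering of the side lengths, which is exactly the added content of Proposition~\ref{prop:3}, and it does so by an elementary self-contained argument of the kind the authors presumably had in mind.
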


The proof is elementary and goes along the following lines: for each triple consisting of two points and a line, determine whether the line separates the points by using the inequalities $|AB|<|BC|<|CA|$. Note that there are only three bracelets composed of three $0$'s and three $1$'s, namely the equivalence classes of $(0,1,0,0,1,1)$, $(0,1,0,1,0,1)$ and $(0,0,0,1,1,1)$. The last bracelet is clearly not realizable, otherwise all three points would lie on one side of one of the perpendicular bisectors. Proposition~\ref{prop:3} implies that the bracelet of $(0,1,0,1,0,1)$ (see Figure~\ref{fig:triangle}, right) is not realizable. Figure~\ref{fig:triangle}, left, depicts a realization of $(0,1,0,0,1,1)$.

We now provide an enumerative result for realizable words and bracelets. It follows from Theorem~\ref{thm:main} and is to be compared with the total number of words of length $2n$ containing $n$ ones and $n$ zeros, which is known to be $\binom{2n}{n} = 4^{n(1+o(1))}$ by Stirling's formula.

\begin{corollary}
  \label{cor:enum}
  The number of realizable words is
  \begin{equation}
    \# \calW_n = 3^n - 2^{n+1}+1.
  \end{equation}
As a consequence, the exponential growth rate of the number of realizable bracelets is equal to $3$, that is, $\# \calB_n= 3^{n(1+o(1))}$.
\end{corollary}

The sequence $(\# \calW_n)_{n\geq1}$ is listed in the Online Encyclopedia of Integer Sequences~\cite{OEIS} (OEIS) under the number A028243 and corresponds to twice the Stirling numbers of second kind.

\begin{table}[htbp]
\centering
\begin{tabular}{|c|c|c|c|c|c|c|c|c|c|c|}
  \hline
  $n$ & 3 & 4 & 5 & 6 & 7 & 8 & 9 & 10 & 11 & 12 \\
  \hline
  $\# \calB_n$  & 1 & 5 & 9 & 30 & 69 & 203 & 519 & 1466 & 3933 & 11025 \\
  \hline
\end{tabular}
\caption{First terms of the sequence $(\# \calB_n)_{n\geq3}$ counting the number of realizable bracelets.}
\label{tab:firstterms}
\end{table}

The sequence $(\# \calB_n)_{n\geq3}$ was absent of the OEIS before our work, so we added it under the number A350280, computing the first few terms using a brute-force algorithm, see Table~\ref{tab:firstterms}. After this addition, OEIS editor A.~Howroyd was able to compute many more terms by finding an explicit formula for $\# \calB_n$, which follows from Theorem~\ref{thm:main}, Corollary~\ref{cor:enum} and Burnside's lemma. For completeness, we state this formula here without proof. Let $\phi$ denote Euler's totient function.

\begin{corollary}[{\cite[Sequence A350280]{OEIS}}]
  \label{cor:howroyd}
For $n \geq 3$, if $n$ is odd,
  \begin{equation}
    \# \calB_n = \frac{1}{4n} \sum_{d | n} \phi\left(\frac{n}{d}\right) \left(3^d - 2^{d+1} + 1 \right),
  \end{equation}
  and if $n$ is even,
  \begin{equation}
    \# \calB_n = \frac{3^{\frac{n}{2}-1} + 1 }{2} + \frac{1}{4n} \sum_{\substack{d | n, \\ \frac{n}{d} \text{ odd}}} \phi\left(\frac{n}{d}\right) \left(3^d - 2^{d+1} - 1 \right).
  \end{equation}
\end{corollary}

Theorem~\ref{thm:main} and Corollary~\ref{cor:enum} are proved in Section~\ref{sec:charac}.

\subsection{Uniformly random points on the circle}
\label{subsec:uniformpoints}

We now turn to the first of two natural models for sampling random realizable words or bracelets. Let $n\geq3$. Select $n$ i.i.d. points uniformly at random on the circle and consider the realizable word $\underline{u}$ obtained from it. For any bracelet $b\in\calB_n$, we denote by $\P(b)$ the probability of achieving $b$ via $n$ i.i.d. uniform random points.

\begin{proposition}
\label{prop:rational}
For every $n\geq3$ and $b\in\calB_n$, $\P(b)$ is a rational number.
\end{proposition}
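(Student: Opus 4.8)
The plan is to realize $\P(b)$ as a finite sum of Lebesgue volumes of rational polytopes in the configuration space, and then to use the elementary fact that a bounded polytope cut out by linear inequalities with rational coefficients has rational volume. First I would parametrize the $n$ i.i.d.\ points by their angular coordinates $\theta_1,\ldots,\theta_n\in[0,1)\cong\R/\Z$, normalized so that the circle has total length $1$; then $\P(b)$ equals the Lebesgue measure of the set of $(\theta_1,\ldots,\theta_n)\in[0,1)^n$ whose associated configuration (points together with the $n$ perpendicular bisectors) produces the bracelet $b$. The key geometric observation is that each bisector $L_i$ passes through the center and meets the circle at the two antipodal angles $\tfrac{\theta_i+\theta_{i+1}}{2}$ and $\tfrac{\theta_i+\theta_{i+1}}{2}+\tfrac12$ (indices mod $n$, angles mod $1$), so that the $2n$ endpoints delimiting the regions on the circle are affine-linear functions of the $\theta_j$ with half-integer coefficients.

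Next I would argue that the whole combinatorial configuration---the cyclic order of the $n$ points together with the $2n$ bisector-crossings, equivalently the occupancy word up to cyclic shift and reflection, i.e.\ the bracelet---is determined by the signs of a fixed finite family of affine forms $\ell_\alpha(\theta_1,\ldots,\theta_n)$, each with rational coefficients: the forms $\theta_i-\theta_j$ encoding the cyclic order of the points, and the forms $\theta_i-\tfrac{\theta_k+\theta_{k+1}}{2}$ and $\theta_i-\tfrac{\theta_k+\theta_{k+1}}{2}-\tfrac12$ encoding whether a point lies before or after a given crossing. The genericity assumption guarantees that the non-generic locus $\bigcup_\alpha\{\ell_\alpha=0\}$ has Lebesgue measure zero, and its complement in $[0,1)^n$ is a finite disjoint union of open cells of the corresponding rational hyperplane arrangement. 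Each such cell is a bounded polytope defined by strict linear inequalities with rational coefficients, and since the sign pattern of all the $\ell_\alpha$ is constant on a cell, so is the cyclic interleaving of the $3n$ relevant angles, hence so is the bracelet.

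Finally I would invoke the fact that a bounded polytope defined by linear inequalities with rational coefficients has rational volume: its vertices are rational by Cramer's rule applied to the defining equalities, and any triangulation into simplices with rational vertices expresses the volume as a finite sum of terms $\tfrac{1}{n!}\lvert\det(\cdot)\rvert$ with rational entries. Summing the volumes of the finitely many cells on which the bracelet equals $b$ (after accounting, by exchangeability, for the $n!$ labelings of the points) then yields $\P(b)\in\mathbb{Q}$.

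The step I expect to require the most care is the second one: writing the crossing angles and the region-containment conditions as genuine affine forms on $[0,1)^n$ forces a finite case analysis to handle the reduction mod $1$, namely the wraparound of the arc joining $P_n$ to $P_1$ and of the antipodal endpoints. This replaces each region by finitely many affine pieces, but on every piece the governing forms remain rational, so the polytopal structure of the cells and the rationality of their volumes survive intact; the remainder of the argument is then routine.
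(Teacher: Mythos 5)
Your proof is correct and takes essentially the same route as the paper's: both decompose the event that the bracelet equals $b$ into finitely many combinatorial cells --- yours as cells of a rational hyperplane arrangement in $[0,1)^n$, the paper's as orderings of the $3n$ quantities $p_1,\ldots,p_n,l_1,\ldots,l_n,l'_1,\ldots,l'_n$ --- and conclude by the rationality of volumes of polytopes cut out by affine inequalities with rational coefficients. The only cosmetic difference is that you work on the full cube and treat the mod-$1$ wraparound by an explicit case analysis, while the paper fixes $p_1=0$ and orders the points from the start.
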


The exact computation of $\P(b)$ for a fixed $b$ is possible but the technique we use requires to compute a number of integrals which is exponential in the number of occurrences of the letter $2$ in the signature of $b$. We provide such a computation in a special case. For every $n\geq3$, define the bracelet $b_n\in\calB_n$ to be the equivalence class of $(1,0,1,\ldots,1,0,\ldots,0)$, which is the word composed of a $1$, followed by a $0$, then $n-1$ $1$'s and finally $n-1$ $0$'s.

\begin{proposition}
\label{prop:nicebracelet}
For every $n\geq3$, we have
\[
\P(b_n)=\frac{n}{3\cdot 2^{2n-6}}.
\]
\end{proposition}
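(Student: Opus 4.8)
The plan is to compute $\P(b_n)$ as a volume on the simplex of gaps, and to evaluate it by decomposing the realizing event according to the relative arrangement of points, antipodes and bisectors. Sampling $n$ i.i.d. uniform points is the same as sampling the cyclic sequence of gaps $(g_1,\dots,g_n)$, $g_i>0$, $\sum_i g_i=2\pi$, uniformly on the simplex $\Delta_n$; the occupancy bracelet is a function of this sequence that is invariant under the dihedral group $D_n$ of cyclic shifts and reversal. Hence $\P(b_n)=\mathrm{Vol}(\mathcal R_n)/\mathrm{Vol}(\Delta_n)$, where $\mathcal R_n\subseteq\Delta_n$ is the ($D_n$-invariant) set of gap sequences realizing $b_n$. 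I would first pin down $\mathcal R_n$ explicitly, then integrate.

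To describe $\mathcal R_n$ I translate the word $b_n$ into the cyclic order of the $2n$ markers (the points $P_i$ and their antipodes $P_i+\pi$) and the $2n$ walls (the arc-midpoints $m_i=\tfrac12(\theta_i+\theta_{i+1})$ and their antipodes $m_i+\pi$). The signature of $b_n$ is $(2,0,1,\dots,1)$: exactly one sector (and its antipode) carries two markers — a point together with the antipode of a nearly diametrically opposite point — exactly one sector (and its antipode) is empty, and every other sector carries a single marker. Because there is a single letter $2$, only boundedly many combinatorial arrangements of the walls are compatible with this occupancy pattern (this is precisely the mechanism behind Proposition~\ref{prop:rational}, where the number of integrals grows like $2$ to the number of $2$'s). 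Each admissible arrangement is a total order on the $4n$ objects, and imposing it yields a system of linear inequalities on the $g_i$. For instance, in the arrangement where the doubly occupied sector is $\{P_1,P_n+\pi\}$ and is immediately followed by the empty sector, a direct check reduces the order to $g_n<\pi$, $\ \pi-g_n<\tfrac{g_1}{2}$, $\ g_2+\dots+g_{n-1}<g_1$ and $g_{n-1}>2\pi-2g_n$, all the intermediate orderings $P_k<m_k<P_{k+1}$ being automatic. The region $\mathcal R_n$ is the union, over the admissible arrangements and over the dihedral orbit, of the corresponding polytopes.

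Finally I would integrate the (constant) uniform density over each polytope — every integral is the volume of a region cut out by linear inequalities, hence rational — and sum them with the correct dihedral multiplicity. The bookkeeping should be organised so that the ``frame'' (the choice of the distinguished gap playing the role of $g_n$, together with the near-antipodal pair and the two empty sectors) contributes the prefactor $\tfrac n3$, anchored by the base case $n=3$, where every triangle is realizable and $\P(b_3)=1$, while each of the remaining $n-3$ points must fall into a sub-arc of relative length $\tfrac14$, contributing $4^{-(n-3)}=2^{-(2n-6)}$. The cleanest way to package this is to prove the recursion $\P(b_n)=\tfrac{n}{4(n-1)}\,\P(b_{n-1})$, which together with $\P(b_3)=1$ gives $\P(b_n)=\dfrac{n}{3\cdot 2^{2n-6}}$.

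The main obstacle is the exact identification and enumeration of the admissible wall arrangements. Since the walls are determined by the points through the antipodal midpoints, several combinatorially distinct total orders give rise to the same occupancy word $b_n$ (already for $n=3$ a single canonical chamber accounts for only a fraction of the realizing set), so one must verify that no arrangement is missed and that the resulting polytope volumes assemble into the claimed product. Checking the symmetry/multiplicity factor that produces the constant $\tfrac n3$ — equivalently, that the contributions telescope into the recursion above — is the delicate point; once the chambers and multiplicities are correct, the evaluation of each individual volume is routine linear algebra.
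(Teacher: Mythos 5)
Your framework is sound as far as it goes---writing $\P(b_n)$ as a sum of volumes of rational polytopes is exactly the mechanism behind Proposition~\ref{prop:rational}, your identification of the signature of $b_n$ as $(2,0,1,\dots,1)$ is correct, and so is the base case $\P(b_3)=1$---but what you have written is a plan rather than a proof, and the two places where you defer the work are precisely where all the content lies. First, you never enumerate the admissible arrangements: you exhibit one sample chamber (``for instance\dots'') and then state that one ``must verify that no arrangement is missed,'' which is the whole difficulty. In the paper this enumeration rests on Lemmas~\ref{lem:match}, \ref{lem:green} and~\ref{lem:red}: the type-$2$/type-$0$ structure of the signature translates into constraints on which dots look left or right, and one finds that, after rotating a distinguished point of the type-$2$ region to the origin, exactly four oriented colored dot configurations realize $b_n$. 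Without an analogue of this step your union of polytopes is not pinned down and no volume can be computed. Second, the recursion $\P(b_n)=\frac{n}{4(n-1)}\P(b_{n-1})$ is reverse-engineered from the target formula rather than derived: you propose no coupling or bijection between the $n$-point and $(n-1)$-point models that would yield it, and the heuristic behind it (``each of the remaining $n-3$ points must fall into a sub-arc of relative length $\tfrac14$'') cannot be invoked as stated, because the relevant arcs depend on the positions of all the other points and the conditional law of a point given the occupancy pattern is not uniform on a fixed arc. Indeed, in the correct computation the constant $\frac{1}{3\cdot 2^{2n-6}}$ does not arise as $\frac13\cdot 4^{-(n-3)}$ in any structural way: it arises as the sum of four unequal chamber probabilities, namely $2\bigl(\tfrac{1}{3\cdot 2^{2n-3}}+\tfrac{1}{2^{2n-3}}\bigr)$, two of which carry a factor $\tfrac13$ and two of which do not.

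The gap could be filled along the lines of the paper: replace the simplex-volume computation by the exponential spacing model of Lemma~\ref{lemma:exp_var}, which removes the constraint that the gaps sum to the circumference and makes the spacings i.i.d.\ $\mathrm{Exp}(1)$ with i.i.d.\ Bernoulli colors; the probability of each of the four realizing configurations then becomes an explicit integral against Erlang densities (for instance $\P(T_1<T_2 \text{ and } T_2+\cdots+T_{n-1}<T_n)$), and these four numbers, summed and multiplied by $n$ for the choice of the rotated point, give $\frac{n}{3\cdot 2^{2n-6}}$. As written, however, your argument establishes neither the chamber list nor the recursion, so it does not prove the proposition.
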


Beyond the probability of individual bracelets, we provide simple exact formulas for the expectations of some statistics. For every $k\in\{0,1,2\}$, a region in a configuration is said to be \emph{of type~$k$} if the total number of points contained in the union of the region with its antipodal region is $k$. In other words, a region of type~$k$ corresponds to a value of $k$ in the signature of the word describing the configuration.

For every $k\in\{0,1,2\}$ and $n\geq3$, denote by $H_{k,n}$ the random variable defined as the number of regions of type~$k$ in a configuration associated with $n$ i.i.d. uniform random points. It follows from the interlacing condition of Theorem~\ref{thm:main} that $H_{0,n}=H_{2,n}$. Since there are $2n$ regions in total we also have that $H_{1,n}=2n-2H_{2,n}$. We prove the following for the expectation of $H_{k,n}$, with $k\in\{0,1,2\}$:

\begin{theorem}
\label{thm:greenexpectation}
For every $n\geq3$, we have
\begin{align}
\E[H_{0,n}]=\E[H_{2,n}]&=\frac{n}{2}\left(1+\frac{1}{3^{n-2}}\right), \\
\E[H_{1,n}]&=n\left(1-\frac{1}{3^{n-2}}\right).
\end{align}
\end{theorem}

We also provide a formula for the expected total lengths of the arcs corresponding to regions of type~$k$. Here we rescale the distances on the circle so that the total length of the circle is $1$.

For every $k\in\{0,1,2\}$ and $n\geq3$, denote by $L_{k,n}$ the random variable defined as the sum of the lengths of all the regions of type~$k$.

\begin{theorem}
\label{thm:greenredlengths}
For every $n\geq3$, we have
\begin{align}
\E[L_{0,n}]&=\frac{3^{n-1}+2n-7}{8\cdot 3^{n-1}}, \\
\E[L_{1,n}]&=\frac{3^{n-1}-n-1}{2\cdot 3^{n-1}}, \\
\E[L_{2,n}]&=\frac{3^n+2n+11}{8\cdot 3^{n-1}}.
\end{align}
\end{theorem}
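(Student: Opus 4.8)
The plan is to reduce all three quantities to two expectations attached to a single tagged point, and then to evaluate those by a spacings computation.

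I would first exploit the geometry set up above. Since every region contains at most one point $P_i$ and at most one antipode, a region is of type $2$ exactly when it holds one point and one antipode, of type $1$ when it holds exactly one of the two, and of type $0$ when it is empty; moreover the antipodal map is a length-preserving involution carrying each ``point, no antipode'' region to an ``antipode, no point'' region. Writing $\ell_1$ for the length of the region containing a tagged point and $\mathbf{1}_1$ for the indicator that this region is of type $2$, exchangeability of the $n$ points together with these remarks gives
\begin{align}
\E[L_{2,n}] &= n\,\E[\ell_1 \mathbf{1}_1], \\
\E[L_{1,n}] &= 2n\bigl(\E[\ell_1] - \E[\ell_1 \mathbf{1}_1]\bigr), \\
\E[L_{0,n}] &= 1 - 2n\,\E[\ell_1] + n\,\E[\ell_1 \mathbf{1}_1],
\end{align}
the last line because the three lengths sum to $1$. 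Thus it suffices to compute the two scalars $\E[\ell_1]$ and $\E[\ell_1\mathbf{1}_1]$, and one can cross-check via $n\E[\ell_1]=\E[L_{2,n}]+\tfrac12\E[L_{1,n}]$, the expected total length of occupied regions.

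Next I would make $\ell_1$ and $\mathbf{1}_1$ explicit. By rotation invariance place the tagged point at $0$, so the remaining $n-1$ points are i.i.d.\ uniform; let $g_L,g_R$ be the arc-distances from $0$ to its two neighbours. On the circle the region boundaries are the arc-midpoints of consecutive points and their antipodes, and the only antipode that can fall in the region of the tagged point is that of the point $P_e$ nearest to the antipodal position $1/2$. Writing $P_e$ at $1/2+\delta$ with adjacent gaps $h_-,h_+$, a short computation gives
\begin{align}
\ell_1 &= \min\!\Bigl(\tfrac{g_L}{2},\,\tfrac{h_-}{2}-\delta\Bigr) + \min\!\Bigl(\tfrac{g_R}{2},\,\tfrac{h_+}{2}+\delta\Bigr), \\
\mathbf{1}_1 &= \mathbf{1}\Bigl\{-\tfrac{g_L}{2} < \delta < \tfrac{g_R}{2}\Bigr\}.
\end{align}
So the whole problem is governed by the joint law of the two clusters of spacings, one at $0$ and one near $1/2$, together with the offset $\delta$.

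I would then determine that joint law. Choosing which of the $n-1$ free points serve as the four neighbours and as $P_e$, and forcing the remaining points to avoid the two emptied neighbourhoods, the density of $(g_L,g_R,h_-,h_+,\delta)$ is proportional to $A^{\,n-6}$ with $A=1-g_L-g_R-h_--h_+$, and is \emph{independent of} $\delta$; hence conditionally $\delta$ is uniform on $(-h_+/2,\,h_-/2)$. Integrating the piecewise-linear expressions above first in $\delta$ (each $\min(a,\text{linear})$ integrates in closed form) and then against the spacing density reduces everything to Beta-type integrals over the simplex of gaps.

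The main obstacle is exactly this final spacings computation. One must handle correctly the degenerate configurations in which the two clusters share a neighbour: this occurs with positive probability for every $n$, and is in fact the generic situation for small $n$ (for $n=3$ the tagged point is itself a neighbour of $P_e$), so the clean ``six distinct points'' density must be supplemented by boundary contributions, and the cases $n\le 5$ likely need separate verification. It is precisely these extreme-spacing (large-gap) boundary terms that generate the powers $3^{-(n-1)}$ and $3^{-(n-2)}$ appearing in the answer, while the bulk integrals supply the rational main terms; assembling and simplifying them yields the three stated formulas, whose sum is manifestly $1$.
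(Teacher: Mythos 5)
Your reduction is sound and genuinely different from the paper's route: the identities $\E[L_{2,n}]=n\,\E[\ell_1\mathbf{1}_1]$, $\E[L_{1,n}]=2n\bigl(\E[\ell_1]-\E[\ell_1\mathbf{1}_1]\bigr)$ and $\E[L_{0,n}]=1-2n\,\E[\ell_1]+n\,\E[\ell_1\mathbf{1}_1]$ do follow from exchangeability, from the fact that antipodal regions have equal lengths, and from $L_{0,n}+L_{1,n}+L_{2,n}=1$; likewise your expressions for $\ell_1$ and $\mathbf{1}_1$ are correct, because the region of the tagged point is the intersection of its Voronoi cell with the antipodal image of the Voronoi cell of $P_e$, so the only antipode that can enter it is that of $P_e$. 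The paper proceeds quite differently: it transfers to an exponential-spacings model (via its transfer lemma), stratifies according to oriented colored-dot patterns (the events $A_\varnothing$, $A_{k,l}$ and $S^1_l,\dots,S^5_{l'}$), and evaluates Erlang integrals; your stratification by coincidence patterns of the six roles (tagged point, its two neighbours, $P_e$, its two neighbours) would keep the number of strata bounded rather than of order $n^2$, which is an attractive feature.

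The genuine gap is that the argument stops exactly where the theorem's content begins: the statement is a set of exact formulas, and nothing in your write-up produces them. Concretely: (a) the degenerate strata (shared neighbour on one side or on both, $P_e$ adjacent to the tagged point, the small cases $n\le 5$) are never enumerated, parametrized or integrated; (b) even on the bulk stratum the description is incomplete, since the density is proportional to $A^{n-6}$ only on the domain where the two cleared neighbourhoods are disjoint, so that $\delta$ is conditionally uniform on $\bigl(\max(-h_+/2,\,g_R+h_--\tfrac12),\ \min(h_-/2,\,\tfrac12-g_L-h_+)\bigr)$ rather than on $(-h_+/2,h_-/2)$ as you claim; and it is exactly these large-gap constraints, together with the degenerate strata, that carry the exponentially small corrections $3^{-(n-1)}$, $3^{-(n-2)}$ in the answer, because the unconstrained bulk integral of a polynomial against $A^{n-6}$ over the simplex yields only rational functions of $n$; and (c) the closing assertion that assembling the boundary terms ``yields the three stated formulas'' is precisely what has to be proved --- there is no verification that the various powers of $\tfrac12$ and $\tfrac13$ produced by the different strata combine into the stated coefficients (by contrast, this bookkeeping is the entire substance of the paper's proof, which devotes a lengthy computation to each of $\E[L_{2,n}]$ and $\E[L_{1,n}]$). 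As it stands, your text is a correct and promising plan with a well-chosen change of viewpoint, but the computational core of the theorem is missing, and it is acknowledged rather than resolved.
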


Each region either contains a single point or is empty. For $n$ points on the circle, there are exactly $n$ occupied regions and $n$ empty regions. Denote by $L_{e,n}$ the sum of the lengths of the empty regions. Since $L_{e,n}=L_{0,n}+\tfrac{L_{1,n}}{2}$, we deduce the following from Theorem~\ref{thm:greenredlengths}:

\begin{corollary}
\label{cor:emptinessproba}
For every $n\geq3$, we have
\[
\E[L_{e,n}]=\frac{3}{8}-\frac{1}{8\cdot 3^{n-3}}.
\]
\end{corollary}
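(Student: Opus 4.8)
The plan is to deduce the corollary from Theorem~\ref{thm:greenredlengths} by pure substitution, using the identity $L_{e,n}=L_{0,n}+\tfrac12 L_{1,n}$ recorded just above the statement. First I would re-justify this identity through the antipodal symmetry of the configuration. A region and its antipodal region are images of one another under the central symmetry, hence subtend arcs of equal length. A type-$0$ region is, by definition, empty (as is its antipode), so the whole of $L_{0,n}$ counts towards empty length; a type-$2$ region and its antipode are both occupied, so they contribute nothing. The type-$1$ regions occur in antipodal pairs of equal length in which exactly one of the two is occupied and the other empty, so precisely half of $L_{1,n}$ counts as empty length. Summing yields $L_{e,n}=L_{0,n}+\tfrac12 L_{1,n}$.

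Second, I would substitute the three closed forms of Theorem~\ref{thm:greenredlengths} and reduce to the common denominator $8\cdot 3^{n-1}$. The numerator becomes $(3^{n-1}+2n-7)+2(3^{n-1}-n-1)=3^{n}-9$, so that $\E[L_{e,n}]=\dfrac{3^{n}-9}{8\cdot 3^{n-1}}$; note in particular that the $n$-dependent terms $2n$ and $-2n$ cancel exactly.

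Finally I would split the fraction and rewrite the power of $3$: one has $\dfrac{3^{n}-9}{8\cdot 3^{n-1}}=\dfrac{3}{8}-\dfrac{9}{8\cdot 3^{n-1}}$, and since $9/3^{n-1}=3^{3-n}=1/3^{n-3}$, this equals $\tfrac38-\tfrac1{8\cdot 3^{n-3}}$, as claimed. There is no genuine obstacle here: the argument is entirely routine, and the only care required is the bookkeeping of exponents and confirming the cancellation of the linear-in-$n$ contributions.
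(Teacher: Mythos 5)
Your proposal is correct and follows exactly the paper's route: the identity $L_{e,n}=L_{0,n}+\tfrac{1}{2}L_{1,n}$ (which the paper states just before the corollary, justified by the same antipodal-pairing observation you give) combined with substitution of the formulas from Theorem~\ref{thm:greenredlengths}. Your arithmetic, including the cancellation of the linear terms and the simplification $9/3^{n-1}=1/3^{n-3}$, checks out.
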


The quantity $\E[L_{e,n}]$ can be interpreted as the probability of landing in an empty region when selecting a location uniformly at random on a circle with $n$ uniformly random points.

From the formulas for fixed $n$, we immediately deduce the first order asymptotic behavior as the number of points $n$ goes to infinity.

\begin{corollary}
\label{cor:asymptotics}
We have the following asymptotic results for $n$ uniform i.i.d. points on the circle in the limit when $n$ tends to infinity:
\begin{enumerate}
 \item the asymptotic fraction of the number of regions of type~$0$, $1$ and $2$ is respectively $\tfrac{1}{4}$, $\tfrac{1}{2}$ and $\tfrac{1}{4}$ ;
 \item the asymptotic fraction of the length covered by regions of type~$0$, $1$ and $2$ is respectively $\tfrac{1}{8}$, $\tfrac{1}{2}$ and $\tfrac{3}{8}$ ;
 \item the asymptotic fraction of the length covered by empty regions is $\tfrac{3}{8}$.
\end{enumerate}
\end{corollary}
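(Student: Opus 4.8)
The plan is to read off all the limits directly from the exact, finite-$n$ formulas established in Theorem~\ref{thm:greenexpectation}, Theorem~\ref{thm:greenredlengths} and Corollary~\ref{cor:emptinessproba}, each of which is already of the shape ``leading constant plus an exponentially small correction''. No new probabilistic input is required: the whole argument consists of dividing by the correct normalization and letting the $3^{-n}$-type terms tend to $0$.

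First, for item~(1) I would recall that there are exactly $2n$ regions in total, and that by the deterministic identities $H_{0,n}=H_{2,n}$ and $H_{1,n}=2n-2H_{2,n}$ recorded before Theorem~\ref{thm:greenexpectation}, it suffices to understand the type-$2$ count. Dividing the formula $\E[H_{2,n}]=\tfrac{n}{2}(1+\tfrac{1}{3^{n-2}})$ by the total $2n$ gives an expected fraction of type-$2$ regions equal to $\tfrac14(1+\tfrac{1}{3^{n-2}})\to\tfrac14$. By $H_{0,n}=H_{2,n}$ the type-$0$ fraction has the same limit $\tfrac14$, and dividing $\E[H_{1,n}]=2n-2\E[H_{2,n}]$ by $2n$ yields $1-\tfrac12(1+\tfrac{1}{3^{n-2}})\to\tfrac12$ for the type-$1$ fraction; the three limits sum to $1$ as they must.

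For item~(2), since the circle is rescaled to total length $1$, the expected length fraction of each type is literally $\E[L_{k,n}]$. Writing each formula of Theorem~\ref{thm:greenredlengths} as a leading constant plus a vanishing term, namely $\E[L_{0,n}]=\tfrac18+\tfrac{2n-7}{8\cdot 3^{n-1}}$, $\E[L_{1,n}]=\tfrac12-\tfrac{n+1}{2\cdot 3^{n-1}}$ and $\E[L_{2,n}]=\tfrac38+\tfrac{2n+11}{8\cdot 3^{n-1}}$, the corrections are of order $n\,3^{-n}$ and hence vanish, giving the limits $\tfrac18$, $\tfrac12$ and $\tfrac38$. As a consistency check I would verify that these three finite-$n$ expressions sum identically to $1$, matching the fact that the total length is $1$ for every $n$. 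Item~(3) is identical: Corollary~\ref{cor:emptinessproba} gives $\E[L_{e,n}]=\tfrac38-\tfrac{1}{8\cdot 3^{n-3}}$, whose limit is $\tfrac38$.

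The only point requiring care is interpretive rather than analytic: one must fix what ``asymptotic fraction'' means (here, the limit of the expected fraction deduced from the finite-$n$ formulas) and use the correct denominator in each case, namely the total number of regions $2n$ for the counting statement and the normalization $1$ for the length statements. Once these normalizations are pinned down there is no genuine obstacle, since every claimed value is simply the leading constant obtained by discarding the $3^{-n}$-type remainders.
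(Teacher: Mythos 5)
Your proposal is correct and coincides with the paper's own (implicit) argument: the paper deduces Corollary~\ref{cor:asymptotics} immediately from the exact formulas of Theorem~\ref{thm:greenexpectation}, Theorem~\ref{thm:greenredlengths} and Corollary~\ref{cor:emptinessproba} by letting the exponentially small corrections vanish, exactly as you do. Your normalizations (dividing counts by $2n$, using total length $1$) and the consistency checks are all accurate.
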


For every $k\in\{0,1,2\}$ we show in addition that, for the model of uniform random points, the regions of type~$k$ are asymptotically equidistributed around the circle, when we consider either their cardinality or their total length. Roughly speaking, the numbers and total lengths of regions of each type in a given portion of the circle are asymptotically proportional to the size of this portion. Specifically, for every $k\in\{0,1,2\}$ and $t \in [0,1]$, we define $h_{k,n}(t)$ and $\ell_{k,n}(t)$ to be respectively the number of regions of type~$k$ and the sum of the lengths of regions of type~$k$ which are entirely contained in the arc from $1$ to $e^{2i\pi t}$. Note that $h_{k,n}(1)=H_{k,n}$ and $\ell_{k,n}(1)=L_{k,n}$.

Then, the following holds in the space $\D([0,1],\R^3)$ of càdlàg functions from $[0,1]$ to $\R^3$, endowed with the $J_1$ topology (we refer to \cite{Kal02} for more background on that topology).

\begin{theorem}
\label{thm:functionalcv}
The following convergences hold in distribution in the space $\D([0,1],\R^3)$:
\begin{itemize}
\item[(i)]
\begin{align*}
\left( \frac{h_{0,n}(t)}{2n}, \frac{h_{1,n}(t)}{2n}, \frac{h_{2,n}(t)}{2n} \right)_{0 \leq t \leq 1} \underset{n \rightarrow \infty}{\overset{(d)}{\longrightarrow}} \left( \frac{t}{4}, \frac{t}{2}, \frac{t}{4} \right)_{0 \leq t \leq 1}
\end{align*}
\item[(ii)]
\begin{align*}
\left( \ell_{0,n}(t), \ell_{1,n}(t), \ell_{2,n}(t) \right)_{0 \leq t \leq 1} \underset{n \rightarrow \infty}{\overset{(d)}{\longrightarrow}} \left( \frac{t}{8}, \frac{t}{2}, \frac{3t}{8} \right)_{0 \leq t \leq 1}.
\end{align*}
\end{itemize}
\end{theorem}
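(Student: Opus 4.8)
The plan is to exploit that both limiting processes are \emph{deterministic} and \emph{continuous}, so that the statement reduces to a law of large numbers made uniform in $t$. Recall that convergence in distribution in $\D([0,1],\R^3)$ for the $J_1$ topology towards a deterministic limit is equivalent to convergence in probability for the $J_1$ metric, and that when the limit is continuous the $J_1$ distance is dominated by the uniform distance. Moreover each of the six coordinate processes $t\mapsto h_{k,n}(t)$ and $t\mapsto \ell_{k,n}(t)$ is nondecreasing in $t$ (enlarging the arc can only add entirely-contained regions), bounded, and its candidate limit is continuous, nondecreasing and deterministic. By a Pólya-type argument (a nondecreasing sequence of functions converging, at every point of a dense set, to a continuous nondecreasing limit converges uniformly; applied pathwise, convergence in probability on a dense set upgrades to uniform convergence in probability), it therefore suffices to prove, for each fixed $t$ in a countable dense subset of $[0,1]$, the scalar convergences of $h_{k,n}(t)/n$ and $\ell_{k,n}(t)$ to their linear limits in probability. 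Since the limits are deterministic, joint convergence of the three type-coordinates, and of parts (i) and (ii) together, is then automatic.

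For the pointwise means I would use the rotational invariance of the i.i.d.\ uniform model. Placing a unit mass at the midpoint of each type-$k$ region produces a random measure on the circle whose law is rotation invariant; hence its intensity is a multiple of the uniform measure, and the expected number of type-$k$ regions whose midpoint falls in an arc of relative length $t$ equals $t\,\E[H_{k,n}]$. The count $h_{k,n}(t)$ of regions \emph{entirely} contained in the arc differs from this by at most the two regions straddling the endpoints, so $\E[h_{k,n}(t)]=t\,\E[H_{k,n}]+O(1)$; dividing by $n$ and invoking Theorem~\ref{thm:greenexpectation} identifies the slope. Applying the same reasoning to the rotation-invariant random measure given by the Lebesgue length carried by type-$k$ regions yields $\E[\ell_{k,n}(t)]=t\,\E[L_{k,n}]+o(1)$, the error being the expected length of the at most two boundary regions, which tends to $0$; Theorem~\ref{thm:greenredlengths} then gives the slopes $\tfrac18,\tfrac12,\tfrac38$ of part (ii).

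The heart of the matter, and the step I expect to be the main obstacle, is the concentration: I would show $\Var(h_{k,n}(t))=o(n^2)$ and $\Var(\ell_{k,n}(t))=o(1)$, after which Chebyshev's inequality upgrades convergence of expectations to convergence in probability. The key structural input is that the type of a region and its length are \emph{locally determined}: the endpoints of a region are perpendicular-bisector directions of cyclically consecutive points, and whether a region together with its antipode carries $0$, $1$ or $2$ points depends only on the points lying in a small angular window around the region and around its antipodal region. With high probability these windows have angular width $O(\log n / n)$, so two regions lying at macroscopic distance on the circle (and whose antipodes are likewise far apart) have asymptotically independent types and lengths. Consequently only $O(n)$ pairs of regions contribute nontrivially to the covariance sum, which yields $\Var(h_{k,n}(t))=o(n^2)$, and the analogous bound $o(1)$ for the length statistic once the $1/n$ scale of individual lengths is taken into account. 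Making the local-determination claim rigorous — in particular taming the rare long-range configurations through the interlacing characterization of Theorem~\ref{thm:main} — is the delicate point; an attractive alternative is to Poissonize the number of points so that disjoint arcs become exactly independent, establish the variance bound in the Poissonized model, and then de-Poissonize.

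Finally I would assemble the pieces. For each fixed $t$, convergence of the mean together with the variance bound gives convergence in probability of $h_{k,n}(t)/n$ and of $\ell_{k,n}(t)$ to the corresponding linear values, consistent with the asymptotic fractions of Corollary~\ref{cor:asymptotics}. Monotonicity in $t$ and continuity of the limits then promote this, via the Pólya-type argument, to uniform convergence in probability on $[0,1]$; and uniform convergence in probability towards a continuous deterministic path is precisely $J_1$ convergence in distribution. Collecting the three type-coordinates yields the functional convergences (i) and (ii).
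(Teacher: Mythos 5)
Your overall architecture is genuinely different from the paper's. The paper restricts to the half-circle, gets tightness essentially for free, and proves finite-dimensional convergence by conditioning on the multinomial counts of dots in blocks $[a_i,a_{i+1}]$, rescaling each block so that Theorems~\ref{thm:greenexpectation} and~\ref{thm:greenredlengths} apply to it, and controlling block interactions by the observation that at most four boundary lines per block are not determined by dots inside that block. You instead reduce the functional statement to pointwise convergence via monotonicity and continuity of the limit (a Dini/P\'olya argument, which is correct and replaces the paper's tightness-plus-marginals reduction), identify the means by rotation invariance of the midpoint measure (cleaner than the paper's block computation), and propose a second-moment bound based on local determination of types. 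The concentration step you single out is indeed where the real work lies: the paper's conditioning on block counts yields exact conditional independence of disjoint blocks, which is structurally simpler than proving a uniform covariance decay for distant regions, so if you pursue your route the Poissonization variant you mention is the one I would recommend.

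There is, however, a concrete gap: your mean computation, carried to the end, does not produce the slopes in the statement of part (i). Theorem~\ref{thm:greenexpectation} gives $\E[H_{2,n}]/n \to \tfrac12$, hence $\E[H_{0,n}]/n \to \tfrac12$ and $\E[H_{1,n}]/n \to 1$, so your identity $\E[h_{k,n}(t)] = t\,\E[H_{k,n}] + O(1)$ yields the limits $\left(\tfrac t2, t, \tfrac t2\right)$, not $\left(\tfrac t4,\tfrac t2,\tfrac t4\right)$; you assert that this ``identifies the slope'' but never check it against the target. The mismatch is not an artifact of your method: since every one of the $2n$ regions is contained in the full circle, $h_{0,n}(1)+h_{1,n}(1)+h_{2,n}(1)=2n$, so any limit of $\bigl(h_{k,n}(1)/n\bigr)_k$ must sum to $2$, whereas the stated limits sum to $1$. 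In other words, part (i) as written is off by a factor of $2$ relative to Theorem~\ref{thm:greenexpectation} (it becomes correct if one normalizes by $2n$, i.e. reads the left side as a fraction of all $2n$ regions); the paper's own proof masks this because its multinomial is written with parameters $(a_i-a_{i-1})$ although the $n$ dots lie in $[0,\tfrac12)$, so the correct parameters are $2(a_i-a_{i-1})$. Part (ii) is internally consistent ($\tfrac t8+\tfrac t2+\tfrac{3t}8=t$), and there your argument gives exactly the stated constants. So, once the variance bound is actually proved, your proposal establishes the corrected version of (i) and the stated version of (ii); as a proof of the literal statement of (i) it cannot succeed, and a careful execution of your own second step would have detected this.
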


Propositions~\ref{prop:rational} and~\ref{prop:nicebracelet} as well as Theorems~\ref{thm:greenexpectation},~\ref{thm:greenredlengths} and~\ref{thm:functionalcv} are proved in Section~\ref{sec:uniform}.

\subsection{Uniformly random realizable configurations}
\label{subsec:uniformbracelets}

Another way to study what a typical realizable configuration looks like is to sample a word or bracelet uniformly at random among all realizable words or bracelets of a given size.

We prove some asymptotic results for the shape of a realizable word or bracelet sampled uniformly at random in $\calW_n$ or $\calB_n$. Let $\underline{w}^{(n)}$ be a random word taken uniformly in the set of
realizable words of length $2n$. For $x \in [0,n]$ a real number and $k \in \{0,1,2 \}$, denote by $F^{k}_x$ the random variable corresponding to the number of occurrences of the letter $k$ in the signature of $\underline{w}^{(n)}$ between positions $0$ and $\lfloor x \rfloor$. Then the following holds:

\begin{theorem}
\label{thm:convergence}
\begin{itemize}
\item[(i)]The following holds in probability:
\begin{align*}
\left( \frac{F^0_n}{n}, \frac{F^1_n}{n}, \frac{F^2_n}{n} \right) \underset{n \rightarrow \infty}{\longrightarrow} \left(\frac{1}{6}, \frac{2}{3}, \frac{1}{6} \right).
\end{align*}

\item[(ii)]
We have the following convergence in distribution :
\begin{align*}
&\frac{2}{\sqrt{n}}\left( F^0_{cn}-\frac{cn}{6}, F^1_{cn}-\frac{2cn}{3}, F^2_{cn}-\frac{cn}{6}\right)_{0 \leq c \leq 1} \\
&\underset{n \rightarrow \infty}{\overset{(d)}{\longrightarrow}} \left( W_c, -2W_c, W_c \right)_{0 \leq c \leq 1}
\end{align*}
where $W$ is a Brownian motion of variance $2/9$.
\end{itemize}
\end{theorem}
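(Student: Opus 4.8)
The plan is to reduce the study of the signature of a uniform realizable word to an i.i.d.\ sequence, and then invoke the weak law of large numbers and Donsker's theorem. By Theorem~\ref{thm:main}, $\underline{w}^{(n)}$ is uniform over the words whose signature is interlacing, and each statistic $F^k_x$ depends only on this signature $\underline{\sigma}$. Given an interlacing signature with exactly $j$ letters equal to $1$, there are exactly $2^{j}$ words with that signature (a $1$ in position $i$ leaves two choices $u_i=1,u_{i+n}=0$ or the reverse, while $0$'s and $2$'s are forced). Hence under the uniform measure on $\calW_n$ the signature has law proportional to $2^{\#\{i:\sigma_i=1\}}$ on interlacing words. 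I claim this coincides with the law of the following object: let $(X_i)_{1\le i\le n}$ be i.i.d.\ $\Bin(1,1/3)$ variables, conditioned on $\sum_i X_i$ being even and at least $2$; set $\sigma_i=1$ when $X_i=0$, and give the positions with $X_i=1$ the letters $0$ and $2$ in the unique alternating pattern fixed by an independent fair coin flip. Indeed, a marked set $S=\{i:X_i=1\}$ of size $2m$ with a given phase has i.i.d.\ probability $\tfrac12\cdot 3^{-n}2^{n-2m}/\P(\sum_i X_i \text{ even}, \ge 2)$, and a direct computation using $\P(\sum_i X_i \text{ even})=(3^n+1)/(2\cdot 3^n)$ and $\P(\sum_i X_i=0)=2^n/3^n$ gives the normalization $(3^n-2^{n+1}+1)/(2\cdot 3^n)=\#\calW_n/(2\cdot 3^n)$, matching Corollary~\ref{cor:enum} and producing exactly the weight $2^{n-2m}$ per signature. (This is the closed-walk description for the transfer matrix $\begin{pmatrix} 2 & 1 \\ 1 & 2 \end{pmatrix}$, whose eigenvalues $3$ and $1$ explain the count $3^n-2^{n+1}+1$.)

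The key feature of this encoding is that the letters $0$ and $2$ strictly alternate along the signature, so for every $x$ one has the deterministic bound $|F^0_x-F^2_x|\le 1$. Writing $S_m=\sum_{i=1}^m X_i$ for the partial sums of the i.i.d.\ indicators, the number of $1$'s among the first $\floor{x}$ positions is $F^1_x=\floor{x}-S_{\floor{x}}$, while the alternation gives $F^0_x=\tfrac12 S_{\floor{x}}+O(1)$ and $F^2_x=\tfrac12 S_{\floor{x}}+O(1)$, uniformly in $x$. Thus all three statistics are affine functions of the single random walk $(S_m)$, up to a uniformly bounded error.

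Part (i) then follows from $S_n/n\to 1/3$ in probability, giving $F^1_n/n\to 2/3$ and, via the bound above, $F^0_n/n,F^2_n/n\to 1/6$. For part (ii), Donsker's theorem yields $\tfrac{1}{\sqrt n}\big(S_{\floor{cn}}-cn/3\big)_{0\le c\le 1}\Rightarrow (W_c)_{0\le c\le 1}$ in $\D([0,1],\R)$ with the $J_1$ topology, where $W$ is a Brownian motion of variance $\Var(X_1)=\tfrac13\cdot\tfrac23=\tfrac29$. Composing with the affine map of the previous paragraph (which is $J_1$-continuous) and absorbing the $O(1)$ and floor corrections, which are $O(1/\sqrt n)$ after rescaling, gives $\tfrac{2}{\sqrt n}\big(F^0_{cn}-cn/6,\,F^1_{cn}-2cn/3,\,F^2_{cn}-cn/6\big)\Rightarrow (W_c,-2W_c,W_c)$, as claimed.

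The one genuine obstacle is that $(X_i)$ is conditioned on $\{\sum_i X_i \text{ even}, \ge 2\}$, while the two limit theorems above concern the \emph{unconditioned} sequence; I plan to show the conditioning affects neither limit. The event $\{\sum_i X_i\ge 2\}$ is harmless, its complement within the even class having probability $(2/3)^n\to 0$. For the parity constraint the law of large numbers is immediate, since $\P(\sum_i X_i \text{ even})\to 1/2$ is bounded away from $0$ and conditioning on an event of probability bounded below preserves convergence in probability. For the functional statement I would show that for every bounded continuous $\Phi$ on $\D([0,1],\R)$ one has $\E[\Phi(\text{rescaled path})\,(-1)^{S_n}]\to 0$: approximating $\Phi$ by a functional of the path up to a time $cn$ with $c<1$ and using that $(-1)^{S_n-S_{\floor{cn}}}$ is independent of the past with mean $(1/3)^{\,n-\floor{cn}}\to 0$ (tightness controlling the remaining interval near $c=1$), one obtains $\E[\Phi\mid \sum_i X_i \text{ even}]=(\E[\Phi]+\E[\Phi\,(-1)^{S_n}])/(1+3^{-n})\to \E[\Phi]$. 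Hence the conditioned and free rescaled walks have the same scaling limit, which completes the proof.
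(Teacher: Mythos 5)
Your proof is correct, and it reaches Theorem~\ref{thm:convergence} by a genuinely lighter route than the paper. The paper deduces the theorem from the finer Theorem~\ref{thm:refinedconvergence}: it folds the word, encodes it bijectively as a two-dimensional walk $(S_i,K_i)$ tracking both $S^{10}_i-S^{01}_i$ and the number of letters in $\{00,11\}$, proves a conditioned invariance principle (Proposition~\ref{prop:convergence}) via a multivariate local limit theorem (Theorem~\ref{thm:multivariatelltart1}), and handles tightness near $c=1$ by time reversal. You observe instead that $F^0,F^1,F^2$ depend only on the signature, that the uniform measure on $\calW_n$ weights an interlacing signature by $2^{\#\{i:\sigma_i=1\}}$, and that this law is exactly that of i.i.d.\ Bernoulli$(1/3)$ marks conditioned on having an even sum at least $2$, together with a fair-coin choice among the two alternating $0/2$ assignments; your normalization check against Corollary~\ref{cor:enum} is right, and your marking process $(S_m)$ is, in law, precisely the paper's second coordinate $(K_m)$. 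The deterministic bound $|F^0_x-F^2_x|\leq 1$ (forced by interlacement) then collapses the problem to one dimension, so ordinary Donsker replaces the local limit theorem, and your removal of the parity conditioning via $\E[\Phi\,(-1)^{S_n}]\to 0$ exploits the same independence-of-increments mechanism as the paper's ratio $\varphi_{n-\lfloor nu\rfloor}(K_{\lfloor nu\rfloor})/\varphi_n(0)\to1$, with $\E\bigl[(-1)^{\Bin(m,1/3)}\bigr]=3^{-m}$ playing the role of the paper's estimate $\P(\Bin_n=0\bmod 2)-\P(\Bin_n=1\bmod 2)=3^{-n}$. What each approach buys: yours is shorter and avoids local limit theory, but it sees only the signature, so it discards exactly the information that Theorem~\ref{thm:refinedconvergence} tracks, namely the distinction between $S^{10}$ and $S^{01}$, which requires the second, independent Brownian coordinate $W^{(1)}$ of variance $2/3$; the paper's heavier two-dimensional setup is what that refinement needs. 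The one step you should write out in full is the freezing argument: to get $\E[\Phi\,(-1)^{S_n}]\to0$ for all bounded continuous $\Phi$, bound it for fixed $c<1$ by $\|\Phi\|_\infty\,3^{-(n-\lfloor cn\rfloor)}+\E\left[|\Phi-\Phi_c|\right]$, where $\Phi_c$ is $\Phi$ applied to the path frozen at time $c$, and then show $\lim_{c\to1}\limsup_{n}\E\left[|\Phi-\Phi_c|\right]=0$ using tightness of the unconditioned rescaled walk and continuity of $\Phi$ at continuous paths; as written this is only gestured at, though it is a standard argument and not a gap in the strategy.
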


The first item of Theorem~\ref{thm:convergence} is a law of large numbers while the second item is a functional central limit theorem. Theorem~\ref{thm:convergence} also holds if we replace a uniformly random realizable word by a uniformly random realizable bracelet (see Remark~\ref{rem:braceletnecklace}).  Theorem~\ref{thm:convergence} is proved in Section~\ref{sec:uniformwords}. As stated, Theorem~\ref{thm:convergence} describes the shape of the signature of uniform random realizable word. In Section~\ref{sec:uniformwords} we actually prove a more refined result, Theorem~\ref{thm:refinedconvergence}, which describes the shape of the word itself.

\subsection{Discussion of the results and open questions}
\label{subsec:discussion}

To conclude this introduction we state a few remarks and open questions.
\begin{itemize}
 \item We have provided computational evidence for most of the results of this introduction. It is contained in two supplementary data files that can be found in the sources. The first file is a code file, written in SageMath (version 9.2) as a Jupyter notebook. The whole notebook takes about one hour to be executed on a standard laptop. The second file is an HTML page that cannot be executed but that allows one to directly visualize the output.
 \item Comparing Corollary~\ref{cor:enum} with Proposition~\ref{prop:nicebracelet}, it is clear that the probability distribution on realizable words or bracelets obtained by sampling $n$ i.i.d. uniform points on the circle differs from the uniform distribution whenever $n$ is large enough. Using the exact initial values of $\#\calB_n$ computed in \cite[Sequence A350280]{OEIS} and some simple inequalities, it is actually possible to show that these two probability distributions differ for every $n\geq4$.
 \item Note that for a large bracelet chosen uniformly at random, the asymptotic fraction of regions of type~$2$ is $\tfrac{1}{6}$ while for a large bracelet constructed from uniform random points on the circle, that fraction is $\tfrac14$.
 \item In the case of uniform random points on the circle, we did not manage to prove a central limit theorem in the vein of
Theorem~\ref{thm:convergence} $(ii)$, although we conjecture that such a statement should hold.
 \item All the formulas of probabilities and expectations in the model of $n$ uniform random points for fixed $n$ (namely those of Proposition~\ref{prop:nicebracelet}, Theorems~\ref{thm:greenexpectation} and~\ref{thm:greenredlengths} and Corollary~\ref{cor:emptinessproba}) are very compact, yet their proofs involve quite lengthy computations. It would be interesting to find shorter and more conceptual proofs of these formulas. 
 \item As a generalization of Theorem~\ref{thm:main}, it would be interesting to characterize the occupancy words arising when we drop the requirement for the points $P_i$ to be arranged in cyclic order. As shown on Figure~\ref{fig:nonconvex}, in that case the occupancy word may have letters greater than $1$.
\end{itemize}

\begin{figure}[htbp]
\centering
\includegraphics[width=1.5in]{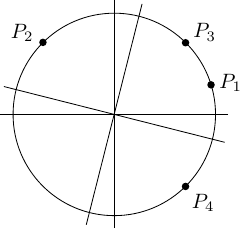}
\caption{An example with $n=4$ where the points $P_i$ are not in cyclic order. Here the occupancy word is $(2,0,1,0,0,0,1,0)$.}
\label{fig:nonconvex}
\end{figure}

\subsection*{Organization of the paper}
In Section~\ref{sec:charac} we prove Theorem~\ref{thm:main} about the characterization of realizable words and Corollary~\ref{cor:enum} about their enumeration. In Section~\ref{sec:uniform} we study the model of uniform random points on the circle and prove the results about this model presented in the introduction.
Finally in Section~\ref{sec:uniformwords} we prove Theorem~\ref{thm:convergence} describing a large realizable word chosen uniformly at random.

\section{Characterization and enumeration of realizable words}
\label{sec:charac}

In Subsection~\ref{subsec:nec}, we prove one direction of Theorem~\ref{thm:main}: the interlacement condition is necessary for a realizable word. The converse is proved in Subsection~\ref{subsec:suff}, using an explicit procedure to construct points from a given word with interlacing signature. Finally in Subsection~\ref{subsec:enumeration} we prove Corollary~\ref{cor:enum} about exact and asymptotic enumeration results.

\subsection{Necessary condition for a realizable word}
\label{subsec:nec}

The unit circle may be identified to the half-open interval $(0,1]$ via the inverse of the map $x\mapsto e^{2i\pi x}$. Denote by $p_1,\ldots,p_n$ the $n$ elements of $(0,1]$ corresponding to the $n$ points $P_1,\ldots,P_n$. For every $1\leq i \leq n$ define the midpoints
\[
l_i=\begin{cases}
\frac{p_i+p_{i+1}}{2} &\text{ if } p_i<p_{i+1} \\
\frac{1+p_i+p_{i+1}}{2} \mod 1 &\text{ if } p_i>p_{i+1} \\
\end{cases}
\]
where the representative modulo $1$ is taken to be in $(0,1]$ and the indices are considered modulo $n$.

 Up to applying a rotation of the circle, one may assume that $l_n=1$. Then we have
\[
0<p_1<l_1<p_2<l_2<\cdots<l_{n-1}<p_n<l_n=1.
\]
The inequalities are strict because of the genericity assumption.

Define also for every $1\leq i \leq n$, $p_i'=p_i+\tfrac{1}{2} \mod 1$ and $l_i'=l_i+\tfrac{1}{2} \mod 1$, the representatives being taken in $(0,1]$. Write $\calP=\{p_1,\ldots,p_n\}$, $\calP'=\{p'_1,\ldots,p'_n\}$, $\calL=\{l_1,\ldots,l_n\}$ and $\calL'=\{l'_1,\ldots,l'_n\}$.
Let $(m_i)_{1\leq i \leq 2n}$ be the reordering of the $l_i$ and $l_i'$, that is,
\[
\{m_i\}_{1\leq i \leq 2n}=\calL \cup \calL'
\]
and
\[
0<m_1<m_2<\cdots<m_{2n-1}<m_{2n}=1.
\]
Here again the inequalities are strict by the genericity assumption. We also set $m_0=0$. Similarly, let $(q_i)_{1\leq i \leq 2n}$ be the reordering of $\calP\cup\calP'$. For any $1\leq i \leq 2n$, we have
\[
v_i=\# [m_{i-1},m_i]  \cap \calP.
\]
Thus for any $1\leq i \leq n$, the signature $\underline{\sigma}$ of the occupancy word associated to $\calP$ satisfies
\[
\sigma_i=\# \left([m_{i-1},m_i] \cup [m_{i+n-1},m_{i+n}] \right) \cap \calP.
\]

Note that for every $1\leq i \leq n$,
\[
\sigma_i=\# [m_{i-1},m_i]\cap\left(\calP\cup\calP'\right).
\]

For any $(a,b)\in(0,1]^2$ define the circular distance
\[
d(a,b)=\min(|b-a|,1-|b-a|)
\]
to be the distance between $a$ and $b$ measured on the circle obtained
by identifying the two endpoints of the interval $[0,1]$.
We also introduce a notion of circular interval defined as follows. Let $a$ and $b$ be two elements of $(0,1]^2$ and define the circular interval
\begin{equation}
\label{eq:cyclic}
I(a,b)=\begin{cases}
(a,b) &\text{ if } a\leq b \\
(0,b)\cup (a,1] &\text{ if } a> b. \\
\end{cases}
\end{equation}
We similarly define $I[a,b)$ to be the circular counterpart for the half-open interval $[a,b)$.

In the remainder of this section, the indices of $q$ (resp.~$\sigma$) will be considered modulo $2n$ (resp.~$n$) and the real numbers of the form $q-\tfrac{1}{2}$ and $q+\tfrac{1}{2}$ should be understood as the representative in $(0,1]$ of an equivalence class modulo $1$.

Let $p\in\calP$. We define $C(p)$ to be the element $x'\in\calP'$
which minimizes $d(p,x')$. By the genericity assumption, $C(p)$ is uniquely defined. Similarly for any $p'\in\calP'$, we define $C(p')$ to be the element $x\in\calP$ which minimizes $d(p',x)$. For any $q \in \calP \cup \calP'$, when $C(q)$ belongs to $I(q,q+\tfrac{1}{2})$ (resp.~$I(q-\tfrac{1}{2},q)$), we say that $q$ \emph{looks} to its right (resp.~left) and we write it $D(q)=R$ (resp.~$D(q)=L$).

\begin{figure}[h]
  \centering
  \def\svgwidth{12cm}
  
  \begingroup%
  \makeatletter%
  \providecommand\color[2][]{%
    \errmessage{(Inkscape) Color is used for the text in Inkscape, but the package 'color.sty' is not loaded}%
    \renewcommand\color[2][]{}%
  }%
  \providecommand\transparent[1]{%
    \errmessage{(Inkscape) Transparency is used (non-zero) for the text in Inkscape, but the package 'transparent.sty' is not loaded}%
    \renewcommand\transparent[1]{}%
  }%
  \providecommand\rotatebox[2]{#2}%
  \newcommand*\fsize{\dimexpr\f@size pt\relax}%
  \newcommand*\lineheight[1]{\fontsize{\fsize}{#1\fsize}\selectfont}%
  \ifx\svgwidth\undefined%
    \setlength{\unitlength}{203.88858104bp}%
    \ifx\svgscale\undefined%
      \relax%
    \else%
      \setlength{\unitlength}{\unitlength * \real{\svgscale}}%
    \fi%
  \else%
    \setlength{\unitlength}{\svgwidth}%
  \fi%
  \global\let\svgwidth\undefined%
  \global\let\svgscale\undefined%
  \makeatother%
  \begin{picture}(1,0.22401233)%
    \lineheight{1}%
    \setlength\tabcolsep{0pt}%
    \put(0,0){\includegraphics[width=\unitlength,page=1]{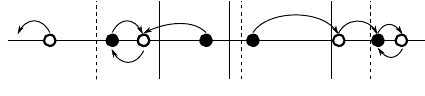}}%
    \put(0.11290264,0.19529557){\color[rgb]{0,0,0}\makebox(0,0)[t]{\lineheight{1.25}\smash{\begin{tabular}[t]{c}$L$\end{tabular}}}}%
    \put(0,0){\includegraphics[width=\unitlength,page=2]{neighbours.pdf}}%
    \put(0.26004181,0.19613066){\color[rgb]{0,0,0}\makebox(0,0)[t]{\lineheight{1.25}\smash{\begin{tabular}[t]{c}$R$\end{tabular}}}}%
    \put(0.33277633,0.19594589){\color[rgb]{0,0,0}\makebox(0,0)[t]{\lineheight{1.25}\smash{\begin{tabular}[t]{c}$L$\end{tabular}}}}%
    \put(0.48075064,0.19529557){\color[rgb]{0,0,0}\makebox(0,0)[t]{\lineheight{1.25}\smash{\begin{tabular}[t]{c}$L$\end{tabular}}}}%
    \put(0.59110503,0.19529557){\color[rgb]{0,0,0}\makebox(0,0)[t]{\lineheight{1.25}\smash{\begin{tabular}[t]{c}$R$\end{tabular}}}}%
    \put(0.79342135,0.19529557){\color[rgb]{0,0,0}\makebox(0,0)[t]{\lineheight{1.25}\smash{\begin{tabular}[t]{c}$R$\end{tabular}}}}%
    \put(0.88603359,0.19529557){\color[rgb]{0,0,0}\makebox(0,0)[t]{\lineheight{1.25}\smash{\begin{tabular}[t]{c}$R$\end{tabular}}}}%
    \put(0.94056057,0.19529557){\color[rgb]{0,0,0}\makebox(0,0)[t]{\lineheight{1.25}\smash{\begin{tabular}[t]{c}$L$\end{tabular}}}}%
    \put(0,0){\includegraphics[width=\unitlength,page=3]{neighbours.pdf}}%
    \put(0.66651386,0.19529567){\color[rgb]{0,0,0}\makebox(0,0)[t]{\lineheight{1.25}\smash{\begin{tabular}[t]{c}$R$\end{tabular}}}}%
    \put(0,0){\includegraphics[width=\unitlength,page=4]{neighbours.pdf}}%
    \put(0.03933303,0.19529557){\color[rgb]{0,0,0}\makebox(0,0)[t]{\lineheight{1.25}\smash{\begin{tabular}[t]{c}$L$\end{tabular}}}}%
    \put(0,0){\includegraphics[width=\unitlength,page=5]{neighbours.pdf}}%
    \put(0.11014379,0.0058535){\color[rgb]{0,0,0}\makebox(0,0)[t]{\lineheight{1.25}\smash{\begin{tabular}[t]{c}$1$\end{tabular}}}}%
    \put(0.03197612,0.00539368){\color[rgb]{0,0,0}\makebox(0,0)[t]{\lineheight{1.25}\smash{\begin{tabular}[t]{c}$1$\end{tabular}}}}%
    \put(0.1906105,0.00539368){\color[rgb]{0,0,0}\makebox(0,0)[t]{\lineheight{1.25}\smash{\begin{tabular}[t]{c}$0$\end{tabular}}}}%
    \put(0.29912568,0.0058535){\color[rgb]{0,0,0}\makebox(0,0)[t]{\lineheight{1.25}\smash{\begin{tabular}[t]{c}$2$\end{tabular}}}}%
    \put(0.46511697,0.00631331){\color[rgb]{0,0,0}\makebox(0,0)[t]{\lineheight{1.25}\smash{\begin{tabular}[t]{c}$1$\end{tabular}}}}%
    \put(0.55478002,0.0058535){\color[rgb]{0,0,0}\makebox(0,0)[t]{\lineheight{1.25}\smash{\begin{tabular}[t]{c}$0$\end{tabular}}}}%
    \put(0.60030116,0.00539368){\color[rgb]{0,0,0}\makebox(0,0)[t]{\lineheight{1.25}\smash{\begin{tabular}[t]{c}$1$\end{tabular}}}}%
    \put(0.70145932,0.00539357){\color[rgb]{0,0,0}\makebox(0,0)[t]{\lineheight{1.25}\smash{\begin{tabular}[t]{c}$1$\end{tabular}}}}%
    \put(0.82330892,0.0058535){\color[rgb]{0,0,0}\makebox(0,0)[t]{\lineheight{1.25}\smash{\begin{tabular}[t]{c}$1$\end{tabular}}}}%
    \put(0.9221681,0.00539368){\color[rgb]{0,0,0}\makebox(0,0)[t]{\lineheight{1.25}\smash{\begin{tabular}[t]{c}$2$\end{tabular}}}}%
  \end{picture}%
\endgroup%

  \caption{A configuration on a portion of $(0,1]$. The black
    (resp.~white) dots represent elements of $\calP$ (resp.~$\calP'$),
    and the vertical solid (resp.~dashed) lines represent elements of
    $\calL$ (resp.~$\calL'$). From each dot $q$, the arrow is directed
    towards
    $C(q)$, and above it, the value of $D(q)$ is written. Below each
    region, the corresponding letter in $\underline{\sigma}$ is
    indicated.}
  \label{fig:ex_conf}
\end{figure}

Our aim in this subsection is to prove the following.
\begin{proposition}
  \label{prop:nec}
Let $\underline{u}$ be the occupancy word associated to a collection of points in cyclic order. Then the signature $\underline{\sigma}$ of $\underline{u}$ is interlacing.
\end{proposition}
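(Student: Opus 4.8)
The plan is to reduce the whole statement to the cyclic word of looking directions $D(q_1),\dots,D(q_{2n})$, where $q_1,\dots,q_{2n}$ is the cyclic enumeration of $\calP\cup\calP'$. Everything rests on one counting lemma: \emph{for two consecutive points $q,q'$ of $\calP\cup\calP'$, with $q$ immediately to the left of $q'$, the number of bisectors (elements of $\calL\cup\calL'$) lying in $I(q,q')$ equals $\mathbf{1}[D(q)=L]+\mathbf{1}[D(q')=R]$.} To prove it I would introduce the two phase functions $\phi(x)=\#(\calP\cap I(0,x])-\#(\calL\cap I(0,x])$ and its primed analogue $\phi'$, built from $\calP',\calL'$. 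Since the points interlace as $p_1<l_1<p_2<\cdots$, $\phi$ takes values in $\{0,1\}$, equals $1$ exactly on the arcs $I(p_i,l_i)$, and jumps by $+1$ at each element of $\calP$ and by $-1$ at each element of $\calL$ (likewise for $\phi'$). As there is no point of $\calP\cup\calP'$ strictly between $q$ and $q'$, the number of bisectors in $I(q,q')$ equals $\Psi(q^+)-\Psi(q'^-)$ with $\Psi=\phi+\phi'$.

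The key point — and the only place the midpoint hypothesis genuinely enters — is that $\calL$ (resp.\ $\calL'$) is exactly the arc-distance Voronoi diagram of $\calP$ (resp.\ $\calP'$): the cell boundary between $p'_i$ and $p'_{i+1}$ is their midpoint $l'_i$. Consequently $\phi'(x)=1$ iff the nearest point of $\calP'$ to $x$ lies to its left, which is by definition $\mathbf{1}[D(x)=L]$ for $x\in\calP$, and symmetrically $\phi(x)=\mathbf{1}[D(x)=L]$ for $x\in\calP'$. Using moreover $\phi(p^+)=1$, $\phi(p^-)=0$ (and the primed versions), one evaluates $\Psi(q^+)=1+\mathbf{1}[D(q)=L]$ and $\Psi(q'^-)=\mathbf{1}[D(q')=L]$ for any point, which yields the lemma after substituting $\mathbf{1}[D(q')=R]=1-\mathbf{1}[D(q')=L]$.

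Granting the lemma, the region types are read off directly from the direction word: writing $d_k=\mathbf{1}[D(q_k)=L]$, the $k$-th gap carries $1+d_k-d_{k+1}$ bisectors, hence it creates a region of type $2$ exactly when $(D(q_k),D(q_{k+1}))=(R,L)$, a pair of bisectors enclosing an empty (type~$0$) region exactly when $(D(q_k),D(q_{k+1}))=(L,R)$, and only type-$1$ regions otherwise. Thus, along the full circle of $2n$ regions, the $0$'s and $2$'s sit precisely at the descents and ascents of the cyclic binary word $(d_k)$, which alternate by a trivial property of cyclic binary words; this is condition~(2) of interlacing. For condition~(1) I would take a globally closest pair $q\in\calP$, $q'\in\calP'$: it is mutually nearest, so $C(q)=q'$ and $C(q')=q$, forcing $D(q)\neq D(q')$ and hence $(d_k)$ to be non-constant, which produces at least one type-$0$ and one type-$2$ region. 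Finally, central symmetry gives $D(q+\tfrac12)=D(q)$, so the full-circle type sequence is $\underline{\sigma}$ repeated twice; alternation and non-constancy then descend to $\underline{\sigma}$, which is the assertion of Proposition~\ref{prop:nec}.

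The main obstacle is the counting lemma, specifically the identification of $\calL,\calL'$ with Voronoi diagrams: this is where the genuine geometric content (the $l_i$ being midpoints, not merely separating points) is used, and it must be handled with care, because for arbitrary separating points the lemma — and indeed the conclusion itself — fails, as one can then build configurations that are entirely of type~$1$. I would therefore state the equivalence ``$\phi'(x)=1 \Leftrightarrow$ nearest $\calP'$-point on the left $\Leftrightarrow D(x)=L$'' explicitly as a consequence of the Voronoi description, and verify the boundary conventions (the values of $\phi$ at $p^\pm$, and the half-open conventions for $I(\cdot,\cdot)$ and for $q\pm\tfrac12$) so that the evaluation of $\Psi$ at $q^+$ and $q'^-$ is exact.
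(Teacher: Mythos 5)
Your proof is correct, and it reaches the paper's stated ``key observation'' (occurrences of $2$, resp.\ $0$, in the signature correspond to $RL$, resp.\ $LR$, patterns in the direction word) by a genuinely different and more unified route. The paper establishes this correspondence through three separate case-analysis lemmas (Lemmas~\ref{lem:match},~\ref{lem:green} and~\ref{lem:red}), whereas your single counting identity --- the gap between $q_k$ and $q_{k+1}$ contains exactly $1+\mathbf{1}[D(q_k)=L]-\mathbf{1}[D(q_{k+1})=L]$ bisectors --- delivers everything those lemmas are used for here, and its proof via the two-valued phase functions and the Voronoi description of $\calL,\calL'$ is sound (a pleasant consistency check: telescoping over the $2n$ gaps gives exactly $2n$ bisectors, and the bound $\leq 2$ per gap shows no region can hold three points). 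The derivation of interlacing also differs: the paper proves that consecutive $0$'s enclose \emph{at least} one $2$ (Lemma~\ref{lem:interlacing}) and then upgrades this to \emph{exactly} one via the global count $s_0=s_2$ coming from $\sum_i\sigma_i=n$, while you get existence-and-uniqueness in one stroke from the alternation of ascents and descents in a cyclic binary word; both arguments then use the identical globally-closest-pair device for non-vacuity (the paper's Lemma~\ref{lem:existgreen}), and your reduction from the doubled word to $\underline{\sigma}$ via $D(q+\tfrac12)=D(q)$ is valid. Your approach buys conceptual economy and makes transparent exactly where the midpoint (Voronoi) property enters --- which, as you note, is the genuine geometric content, since with arbitrary separating lines the all-$1$ signature becomes achievable. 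What the paper's longer route buys is reusability: Lemmas~\ref{lem:match}--\ref{lem:red} are invoked again in Section~\ref{sec:uniform} to show that an OCDC determines the bracelet, so the case analysis is not wasted there. The items you flag as needing care --- normalizing $\phi,\phi'$ to $\{0,1\}$ (or defining them intrinsically as indicators that the nearest point of $\calP$, resp.\ $\calP'$, lies to the left) and using genericity to keep bisectors off the endpoints $q_k$ when evaluating $\Psi(q^+)-\Psi(q'^-)$ --- are real but routine, and do not constitute a gap.
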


A key observation in this article is that the occurrences of $2$ (resp.~$0$) in
$\underline{\sigma}$ exactly correspond to the occurrences of the pattern $RL$ (resp.~$LR$) in the successive
values of $\left( D(q_i) \right)_{i=1,\dots,2n}$, observation which is made explicit in Lemmas~\ref{lem:match} and~\ref{lem:green} (resp.~in Lemma~\ref{lem:red}).

\begin{lemma}
\label{lem:match}
Let $1\leq i\leq 2n$ and assume that $D(q_i)=R$ and $D(q_{i+1}) = L$. Then exactly one of $q_i$ and $q_{i+1}$ belongs to $\calP$, and we have $C(q_i)=q_{i+1}$ and $C(q_{i+1})=q_i$.
\end{lemma}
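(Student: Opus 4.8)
The plan is to first record a purely local description of $C(q)$ in terms of $D(q)$, and then to use the two hypotheses $D(q_i)=R$ and $D(q_{i+1})=L$ \emph{simultaneously} to force $q_i$ and $q_{i+1}$ to be of different types; the identities $C(q_i)=q_{i+1}$ and $C(q_{i+1})=q_i$ then follow for free.

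\emph{Step 1 (local description of $C$).} I would first show that if $D(q)=R$ then $C(q)$ is the first point of the opposite type (of $\calP'$ if $q\in\calP$, and of $\calP$ if $q\in\calP'$) encountered when turning counterclockwise from $q$; symmetrically, $D(q)=L$ means $C(q)$ is the first opposite-type point met turning clockwise. This is immediate from the definitions: $C(q)$ is by construction the globally nearest opposite-type point, and $D(q)=R$ places it in the half-circle $I(q,q+\tfrac12)$, on which $d(q,\cdot)$ is an increasing function of the counterclockwise arc. Being both globally nearest and inside this half-circle, $C(q)$ must coincide with the first opposite-type point met counterclockwise, uniqueness coming from genericity.

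\emph{Step 2 (the two points are of opposite types).} Since $q_i$ and $q_{i+1}$ are consecutive in $\calP\cup\calP'$, no point of $\calP\cup\calP'$ lies strictly between them. Suppose for contradiction that they have the same type; applying the antipodal rotation $x\mapsto x+\tfrac12$ if needed (it exchanges $\calP$ and $\calP'$, preserves the cyclic order, and leaves every value $D(q)$ unchanged), I may assume $q_i,q_{i+1}\in\calP$. By Step 1, $b:=C(q_i)$ is the first $\calP'$-point counterclockwise from $q_i$, and since no $\calP'$-point separates $q_i$ from $q_{i+1}$ it lies strictly after $q_{i+1}$; likewise $a:=C(q_{i+1})$ is the first $\calP'$-point clockwise from $q_{i+1}$, lying strictly before $q_i$. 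Writing arc-coordinates with $q_i$ at $0$, $q_{i+1}$ at $\delta>0$, $b$ at $\beta$ and $a$ at $-\alpha$, the hypotheses $D(q_i)=R$ and $D(q_{i+1})=L$ force $\beta<\tfrac12$ and $\alpha+\delta<\tfrac12$, so all four arcs entering the distance comparisons are shorter than a half-circle and the circular distances equal the corresponding arc-lengths. Then $D(q_i)=R$ (that $b$, not $a$, is nearest to $q_i$) reads $\beta<\alpha$, while $D(q_{i+1})=L$ (that $a$, not $b$, is nearest to $q_{i+1}$) reads $\delta+\alpha<\beta-\delta$. Together these give $\beta<\alpha<\beta-2\delta$, whence $\delta<0$, a contradiction. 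Hence exactly one of $q_i,q_{i+1}$ belongs to $\calP$.

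\emph{Step 3 (conclusion).} Knowing now that $q_i$ and $q_{i+1}$ are of opposite types, Step 1 finishes everything: the first opposite-type point counterclockwise from $q_i$ is its immediate neighbour $q_{i+1}$, so $C(q_i)=q_{i+1}$, and symmetrically the first opposite-type point clockwise from $q_{i+1}$ is $q_i$, so $C(q_{i+1})=q_i$. The main obstacle is the bookkeeping inside Step 2: one must certify that every arc appearing in the distance comparisons is shorter than a half-circle, so that the minima defining $d$ are resolved and the two ``looking'' conditions linearise into $\beta<\alpha$ and $\alpha+2\delta<\beta$. This is precisely where both hypotheses are needed at once; dropping either one leaves a genuinely nonlinear comparison of circular distances and the contradiction disappears.
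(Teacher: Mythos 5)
Your proof is correct, and it shares the paper's overall skeleton: assume for contradiction that $q_i$ and $q_{i+1}$ have the same type, reduce by the antipodal symmetry to both lying in $\calP$, exploit the incompatibility of $D(q_i)=R$ with $D(q_{i+1})=L$, and finally deduce $C(q_i)=q_{i+1}$ and $C(q_{i+1})=q_i$ from the emptiness of $I(q_i,q_{i+1})\cap(\calP\cup\calP')$. The mechanism you use to extract the contradiction, however, is genuinely different. The paper stays coordinate-free: from $I(q_i,q_{i+1})\cap\calP'=\varnothing$ and $D(q_i)=R$ it places $C(q_i)$ in $I(q_{i+1},q_{i+1}+\tfrac12)$, argues that the two points would then have to share their nearest $\calP'$-point, i.e.\ $C(q_{i+1})=C(q_i)$, and concludes because $C(q_{i+1})$ would then lie in $I(q_{i+1}-\tfrac12,q_{i+1})\cap I(q_{i+1},q_{i+1}+\tfrac12)=\varnothing$. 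You instead first isolate a local characterization (your Step 1: $C(q)$ is the first opposite-type point met in the looking direction), a statement the paper never makes explicit although it implicitly underlies both this proof and that of Lemma~\ref{lem:red}; you then introduce arc coordinates, certify that the four relevant arcs are shorter than a half-circle, and linearize the two nearness comparisons into $\beta<\alpha$ and $\alpha+2\delta<\beta$, whence $\delta<0$. What your route buys is transparency: every use of $d$ reduces to an arc length, and the contradiction is a one-line arithmetic step; it also makes Step 3 immediate, since Step 1 directly identifies $C(q_i)$ and $C(q_{i+1})$ as the adjacent points. What the paper's route buys is brevity and a structural reading of the obstruction (two consecutive same-type points looking toward each other would share a nearest opposite point, which cannot lie on both sides of $q_{i+1}$), though its pivotal sentence ``the element closest to $q_i$ is also the closest to $q_{i+1}$'' silently compresses exactly the monotonicity-of-distance bookkeeping that you spell out.
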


\begin{proof}
Up to performing a rotation of the circle, one may assume that $q_i<q_{i+1}$  (this is needed to take into account the case $i=2n$). Reason by contradiction and assume that both $q_i$ and $q_{i+1}$ are in $\calP$. Since $q_{i+1}\in\calP$, we cannot have $C(q_i)=q_{i+1}$, and it follows from the fact that $I(q_i,q_{i+1})\cap\calP'=\varnothing$  that $C(q_i)\in I(q_{i+1},q_{i+1}+\tfrac{1}{2})$. The element of $\calP'$ in $I(q_{i+1},q_{i+1}+\tfrac{1}{2})$ which is closest to $q_i$ is also the closest to $q_{i+1}$, hence $C(q_{i+1})=C(q_i)$, so that $C(q_{i+1})$ belongs to $ I(q_{i+1}-\tfrac{1}{2},q_{i+1}) \cap I(q_{i+1},q_{i+1}+\tfrac{1}{2})= \varnothing$, which leads to a contradiction. Similarly, $q_i$ and $q_{i+1}$ cannot both be in $\calP'$. The last two statements of the lemma follow from the fact that $I(q_i,q_{i+1})\cap\left(\calP\cup\calP'\right)=\varnothing$.
\end{proof}

\begin{lemma}
\label{lem:green}
Let $p\in\calP$ and $x'\in\calP'$ be such that $x'=C(p)$ and $p=C(x')$. Let also $1\leq i \leq 2n$ be such that $p\in[m_{i-1},m_i]$. Then $\sigma_i=2$.
Conversely, let $1\leq i \leq 2n$ be such that $\sigma_i=2$ and denote by $p\in\calP$ and $x'\in\calP'$ the two elements of $[m_{i-1},m_i]\cap\left(\calP\cup\calP'\right)$. Then $x'=C(p)$ and $p=C(x')$.
\end{lemma}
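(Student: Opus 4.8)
The plan is to reduce both implications to a single equivalence about a consecutive pair. Recall that the regions are exactly the maximal sub-arcs of $(0,1]$ containing no element of $\calL\cup\calL'$, so two points of $\calP\cup\calP'$ lie in a common region if and only if no element of $\calL\cup\calL'$ lies strictly between them. Moreover two consecutive elements of $\calP$ are separated by their midpoint, which lies in $\calL$, and likewise for $\calP'$ and $\calL'$; hence every region contains at most one element of $\calP$ and at most one of $\calP'$, so $\sigma_i\in\{0,1,2\}$ and $\sigma_i=2$ precisely when region $i$ contains one point of $\calP$ and one of $\calP'$. The lemma is therefore equivalent to the following assertion: if $p\in\calP$ and $p'\in\calP'$ are consecutive in $\calP\cup\calP'$, then they lie in a common region if and only if $p'=C(p)$ and $p=C(p')$.

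The heart of the argument is a midpoint computation. Assume (after a rotation, and using reversal symmetry for the opposite orientation) that $p<p'$ with $p'-p=d(p,p')<\tfrac12$; the strict inequality holds because the antipode of $p$ always lies in $\calP'$ at distance exactly $\tfrac12$ while circle-distances never exceed $\tfrac12$, so $C(p)$ is never the antipode of $p$. Write $p=p_k$ and let $p_{k+1}$ be the next point of $\calP$; since $(p,p')$ contains no point of $\calP$ we have $p_{k+1}>p'$ and $(p,p')\subseteq(p_k,p_{k+1})$, so the only element of $\calL$ that can lie in $(p,p')$ is $l_k=\tfrac12(p_k+p_{k+1})$. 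Now $l_k<p'$ is equivalent to $d(p',p_{k+1})<d(p',p)$; since the two $\calP$-neighbours of $p'$ are exactly $p$ (on the left) and $p_{k+1}$ (on the right), this says precisely that $C(p')\neq p$. Symmetrically, writing $p'=p'_m$ and letting $p'_{m-1}<p$ be the previous point of $\calP'$, the only element of $\calL'$ that can lie in $(p,p')$ is $l'_{m-1}=\tfrac12(p'_{m-1}+p'_m)$, and $l'_{m-1}>p$ is equivalent to $d(p,p'_{m-1})<d(p,p')$, i.e.\ to $C(p)\neq p'$. Combining the two, the arc $(p,p')$ is free of $\calL\cup\calL'$ if and only if both $C(p')=p$ and $C(p)=p'$, which is exactly the mutual-neighbour condition. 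This proves the displayed assertion, and in particular the converse half of the lemma.

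For the direct half it remains to check that the mutual-neighbour hypothesis forces $p$ and $p'$ to be consecutive, so that the assertion above applies. This is immediate: a point of $\calP$ strictly between $p$ and $p'$ would be closer to $p'$ than $p$ is, contradicting $p=C(p')$, and a point of $\calP'$ strictly between them would be closer to $p$ than $p'$ is, contradicting $p'=C(p)$. Hence $p$ and $p'$ are consecutive, the assertion yields that they share the region $[m_{i-1},m_i]$ containing $p$, and therefore $\sigma_i=2$.

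I expect the only delicate point to be the bookkeeping of the second paragraph: pinning down that the unique candidate elements of $\calL$ and of $\calL'$ inside $(p,p')$ are $l_k$ and $l'_{m-1}$, and translating the two midpoint inequalities into the comparisons defining $C(p')$ and $C(p)$. Everything else is genericity bookkeeping ensuring that all inequalities are strict, together with the elementary fact that the nearest point on a circle is always an immediate neighbour on one side.
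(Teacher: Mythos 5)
Your overall strategy is sound and is organized genuinely differently from the paper's proof. The paper handles the two implications by separate one-directional arguments: for the direct half it observes that $p=C(p')$ forces $p'$ to lie in the arc around $p$ bounded by the two elements of $\calL$ adjacent to $p$ (a Voronoi-cell statement), and symmetrically for $\calL'$; for the converse half it argues that $C(p')\neq p$ would yield a perpendicular bisector separating $p'$ from $p$. You instead prove a single two-sided criterion---for a consecutive pair $p\in\calP$, $p'\in\calP'$, the empty arc $(p,p')$ is free of $\calL\cup\calL'$ if and only if $p'=C(p)$ and $p=C(p')$---by explicitly identifying the unique candidate element of $\calL$ and the unique candidate element of $\calL'$ that could fall in that arc, and translating their positions into nearest-neighbour comparisons. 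This is more computational but also more self-contained: it makes explicit the ``unique candidate bisector'' structure that the paper only develops later (in Lemma~\ref{lemma:m}), and it sidesteps the delicate point in the paper's converse, where the invoked bisector of $C(p')$ and $p$ is in general not one of the $n$ arrangement lines, so that the claimed ``separation'' itself needs a Voronoi-type justification.

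There is, however, one genuine (though easily repaired) gap, precisely in the normalization $p'-p=d(p,p')<\tfrac12$ on which your whole midpoint computation rests. Your justification for it---that $C(p)$ is never the antipode of $p$---presupposes $p'=C(p)$, which you may assume in the direct half but is exactly the conclusion in the converse half. In the converse half you only know that $p$ and $p'$ are the two points of a type-$2$ region, so the arc between them inside the region is free of $\calP\cup\calP'$; you must still rule out that this empty arc is the \emph{long} arc, since otherwise the translation ``$l_k<p'$ iff $d(p',p_{k+1})<d(p',p)$'' is no longer valid. The fix is one line, and it uses the fact you already isolated but aimed at a different conclusion: the antipode $p+\tfrac12$ of $p$ belongs to $\calP'$, hence cannot lie in the empty arc starting at $p$, so that arc has length at most $\tfrac12$; equality would force $p'=p+\tfrac12$ and then, by antipodal symmetry of $\calP\cup\calP'$, the whole set $\calP\cup\calP'$ would be contained in $\{p,p'\}$, impossible for $n\geq3$. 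With that sentence inserted, the reduction of the converse half to your normalized equivalence is complete and the proof is correct.
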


\begin{proof}
Assume that $x'=C(p)$, $p=C(x')$ and take $i$ such that $p\in[m_{i-1},m_i]$. The point $p$ is the element of $\calP$ closest to $x'$, hence no element of $\calL$ separates $x'$ from $p$. Similarly, since $x'$ is the element of $\calP'$ closest to $p$, no element of $\calL'$ separates $p$ from $x'$. Thus $x'\in[m_{i-1},m_i]$ and $\sigma_i=2$.

Conversely, let $1\leq i \leq 2n$ be such that $\sigma_i=2$ and denote by $p\in\calP$ and $x'\in\calP'$ the two elements of $[m_{i-1},m_{i}]\cap\left(\calP\cup\calP'\right)$. If we had $C(x')\neq p$, then the perpendicular bisector of $C(x')$ and $p$
would separate $x'$ from $p$, which is not the case. So $C(x')=p$ and similarly $C(p)=x'$.
\end{proof}

\begin{lemma}
  \label{lem:red}
  Let $1\leq i \leq 2n$ be such that $D(q_i)=L$ and $D(q_{i+1})=R$. Then
  there exists a unique $1\leq j\leq 2n$ such that $m_j$ and $m_{j+1}$ are both in
  $I(q_i,q_{i+1})$, and this $j$ satisfies
  $\sigma_{j+1}=0$.
  Conversely, assume that $1\leq j\leq 2n$ is such that
  $\sigma_{j+1}=0$. Denote by $q_i$ the largest element of $\calP \cup \calP'$
  smaller than $m_j$. Then $D(q_i)=L$ and
  $D(q_{i+1})=R$.
\end{lemma}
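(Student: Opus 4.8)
The plan is to prove a single structural statement that subsumes both directions, namely a count of how many elements of $\calL\cup\calL'$ fall between two cyclically consecutive elements of $\calP\cup\calP'$. The main claim is: if $q_i,q_{i+1}$ are consecutive in $\calP\cup\calP'$ and $g$ denotes the number of elements of $\calL\cup\calL'$ in $I(q_i,q_{i+1})$, then $g=0$ exactly when $(D(q_i),D(q_{i+1}))=(R,L)$, $g=2$ exactly when $(D(q_i),D(q_{i+1}))=(L,R)$, and $g=1$ otherwise. Lemma~\ref{lem:red} then reads off immediately: an $LR$ pattern produces exactly two $m$'s in the gap, which are necessarily consecutive, say $m_j$ and $m_{j+1}$, with an empty region $[m_j,m_{j+1}]$ between them, hence $\sigma_{j+1}=0$; uniqueness of $j$ is clear since there are only two $m$'s in the gap. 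Conversely, $\sigma_{j+1}=0$ means $[m_j,m_{j+1}]$ contains no element of $\calP\cup\calP'$, so the consecutive $q_i,q_{i+1}$ surrounding the pair satisfy $m_j,m_{j+1}\in I(q_i,q_{i+1})$, forcing $g\geq 2$, hence $g=2$ and the $LR$ pattern.

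To reformulate $C$ and $D$, I would first record that $\calP\cup\calL$ strictly alternate around the circle, as do $\calP'\cup\calL'$ (this is exactly the chain $0<p_1<l_1<\cdots<p_n<l_n=1$ and its antipodal shift). Consequently any gap $I(q_i,q_{i+1})$, which contains no element of $\calP\cup\calP'$, contains at most one element of $\calL$ (two would sandwich a point of $\calP$) and at most one of $\calL'$; in particular $g\leq 2$, which already gives the easy bound. The second ingredient is a Voronoi reading of $C$: for $p\in\calP$, since the midpoints of consecutive elements of $\calP'$ are exactly the elements of $\calL'$, the nearest antipode $C(p)$ is the element $p'$ of $\calP'$ whose arc between two consecutive elements of $\calL'$ contains $p$, and $D(p)=R$ or $L$ according to whether $p$ lies to the left or to the right of $p'$ inside that arc; the symmetric statement holds for $p'\in\calP'$ with the roles of $\calL,\calP$ exchanged. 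Genericity guarantees all these nearest elements are unique and all inequalities strict.

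With these in hand I would prove the count by splitting on the types of the two endpoints. When $q_i,q_{i+1}$ are of the same type, say both in $\calP$, they are consecutive points $p_k,p_{k+1}$, the midpoint $l_k$ always lies in the gap, and a short analysis of whether the gap also contains the $\calL'$-boundary separating the cells of $C(q_i)$ and $C(q_{i+1})$ shows that this boundary is present precisely when $(D(q_i),D(q_{i+1}))=(L,R)$ and absent when the two directions agree, so $g=2$ or $g=1$ accordingly and the pattern $RL$ cannot occur; the case of two elements of $\calP'$ is symmetric. When $q_i=p\in\calP$ and $q_{i+1}=p'\in\calP'$ (or vice versa), the Voronoi reading gives directly that the gap contains an element of $\calL'$ iff $D(p)=L$ and an element of $\calL$ iff $D(p')=R$, so $g=\mathbf{1}_{\{D(q_i)=L\}}+\mathbf{1}_{\{D(q_{i+1})=R\}}$, which matches the claimed dictionary (and recovers the content of Lemma~\ref{lem:match} as the case $g=0$, consistently with Lemma~\ref{lem:green}).

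The main obstacle, and the only genuinely delicate point, is the bookkeeping in this case analysis: one must pin down exactly which midpoints of $\calL$ and $\calL'$ land inside the gap, keeping track of whether the matched cell-centers $C(q_i),C(q_{i+1})$ lie inside or outside it, and check in the same-type case that the relevant boundary midpoint is not pushed out of the gap by the second endpoint. As in the proof of Lemma~\ref{lem:match}, the wraparound case $i=2n$ is handled at the outset by rotating the circle so that $q_i<q_{i+1}$, which also lets me write $I(q_i,q_{i+1})=(q_i,q_{i+1})$ throughout.
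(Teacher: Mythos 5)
Your proposal is correct; every claim in it checks out against the paper's definitions. The forward half of your dictionary is proved essentially as in the paper: the same split into mixed-type and same-type pairs, the same use of the strict alternation of $\calP$ with $\calL$ (and of $\calP'$ with $\calL'$), and the same midpoints playing the role of the two elements of $\calL\cup\calL'$ in the gap; your Voronoi-cell reading of $C$ and $D$ is a clean repackaging of the paper's recurring argument that no element of $\calL'$ separates a point of $\calP$ from its nearest element of $\calP'$. Where you genuinely differ is the converse. The paper proves it directly: given $\sigma_{j+1}=0$, alternation forces one of $m_j,m_{j+1}$ to lie in $\calL$ and the other in $\calL'$, and a short bisector argument (the $\calL'$-element among them bisects two points of $\calP'$ lying outside the gap, with $q_i$ to its left) yields $D(q_i)=L$ and $D(q_{i+1})=R$. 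You instead complete the classification — treating also the patterns $LL$ and $RR$, and showing $RL$ can occur only for mixed-type pairs, where it gives $g=0$ — and then obtain the converse by pure exhaustion: $\sigma_{j+1}=0$ forces $g\geq 2$, hence $g=2$, hence the pattern $LR$. Your trichotomy is a stronger, more unified statement: it subsumes Lemma~\ref{lem:match} as the case $g=0$ and makes both "only if" directions automatic, at the price of verifying all four patterns; the paper's route is shorter for this single lemma because its converse argument bypasses the same-type analysis entirely. One detail to spell out when you do the bookkeeping: the mixed-case equivalence "the gap contains an element of $\calL'$ iff $C(p)\neq p'$" needs, for its "only if" part, the observation that when $p$ and $p'$ lie in a common $\calL'$-cell the gap must be the within-cell arc — the complementary arc from $p$ to $p'$ contains at least two elements of $\calL'$, whereas the gap, by alternation, can contain at most one.
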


\begin{proof}
  Let $1\leq i \leq 2n$ be such that $D(q_i)=L$ and $D(q_{i+1})=R$. We distinguish three cases.

  The first case is when $q_i$ and $q_{i+1}$ are of different types, that is, one belongs to $\calP$ and the other to $\calP'$. By symmetry we may assume that $q_i\in\calP'$ and $q_{i+1}\in\calP$. An example can be seen around the leftmost empty region in Figure~\ref{fig:ex_conf}. Since $D(q_i)=L$, we have that $C(q_i)$ and $q_{i+1}$ are two consecutive elements in $\calP$. Hence
  $M :=\tfrac{C(q_i)+q_{i+1}}{2}\in\calL$. Since $q_i$ is closer to
  $C(q_i)$ than to $q_{i+1}$, we have that $M\in I(q_i,q_{i+1})$ and
  $M$ is the only element of $\calL$ in $I(q_i,q_{i+1})$. A similar
  argument shows that $M' :=\tfrac{q_i+C(q_{i+1})}{2}$ is the only
  element of $\calL'$ in $I(q_i,q_{i+1})$. Hence $I(q_i,q_{i+1})$
  contains exactly two elements of $\calL\cup\calL'$. Denoting them by
  $m_j$ and $m_{j+1}$, we conclude that $\sigma_{j+1}=0$.

 The second case is when $q_i$ and $q_{i+1}$ both belong to $\calP$ (see for example the configuration around the second $0$ in Figure~\ref{fig:ex_conf}). Then $\tfrac{q_i+q_{i+1}}{2}$ is the only element of $I(q_i,q_{i+1})\cap \calL$. Furthermore, $C(q_i)$ and $C(q_{i+1})$ are consecutive elements in $\calP'$, so $M'' :=\tfrac{C(q_i)+C(q_{i+1})}{2}\in\calL'$. Since $q_i$ is closer to $C(q_i)$ than to $C(q_{i+1})$, we have that $M''\in I(q_i, q_i+\tfrac{1}{2})$. Since $q_{i+1}$ is closer to $C(q_{i+1})$ than to $C(q_i)$, we have that $M''\in I(q_{i+1}-\tfrac{1}{2},q_{i+1})$. So $M''$ is the only element of $I(q_i,q_{i+1})\cap \calL'$. The conclusion follows as in the first case.

  The third case, when $q_i$ and $q_{i+1}$ both belong to $\calP'$, is treated like the second case.

  Conversely, assume $1\leq j\leq 2n$ is such that $\sigma_{j+1}=0$. Since two consecutive elements of $\calL$ (resp.~of $\calL'$) must be separated by an element of $\calP$ (resp.~of $\calP'$), we deduce that among $m_j$ and $m_{j+1}$, one belongs to $\calL$ and the other to $\calL'$. Denote by $q_i$ the largest element of $\calP \cup \calP'$ smaller than $m_j$. Then $q_{i+1}$ is bigger than $m_{j+1}$. If $q_i\in \calP$, consider the unique element of $\calL'\cap \{m_j,m_{j+1}\}$. It is the bisector of two points of $\calP'$ and these points cannot be in $I(q_i,q_{i+1})$. Moreover, $q_i$ is to the left of the bisector. This implies that $D(q_i)=L$. This works also in the case $q_i\in\calP'$. Similarly, one shows that $D(q_{i+1})=R$.
\end{proof}

\begin{remark}
\label{rem:signaturearrows}
In particular, Lemmas~\ref{lem:match},~\ref{lem:green} and~\ref{lem:red} imply that the signature of a realizable word uniquely determines the sequence $D(q_i)_{1\leq i\leq 2n}$.
\end{remark}

We now prove that at least one region of a realizable word is of type~$2$.

\begin{lemma}
\label{lem:existgreen}
There exists $1\leq i \leq n$ such that $\sigma_i=2$.
\end{lemma}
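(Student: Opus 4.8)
The plan is to produce a region of type $2$ by exhibiting a pair of mutually closest points of opposite types and then invoking the direct implication of Lemma~\ref{lem:green}. Since $\calP$ and $\calP'$ are finite, I would let $(p,p')\in\calP\times\calP'$ be a pair minimizing the circle distance $d(p,p')$ among all cross pairs; such a minimizer exists, and it is unique by the genericity assumption that all pairwise distances are distinct. The key claim is that this pair consists of mutual nearest neighbors, namely $C(p)=p'$ and $C(p')=p$.

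To verify the claim: by definition $C(p)$ is the element of $\calP'$ closest to $p$, so restricting the global minimization to the pairs whose first coordinate is $p$ shows that $p'$ realizes $\min_{x'\in\calP'} d(p,x')$, whence $C(p)=p'$. The symmetric argument, fixing the $\calP'$-coordinate to $p'$ and minimizing over $\calP$, gives $C(p')=p$. With both relations in hand, I would let $i$ be the index with $p\in[m_{i-1},m_i]$; the first (direct) part of Lemma~\ref{lem:green} then yields $\sigma_i=2$, which is exactly the desired conclusion.

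Conceptually, this fits the $RL\leftrightarrow 2$ dictionary highlighted before Lemma~\ref{lem:match}: the two members of the closest cross pair look at each other, and since no third point of $\calP\cup\calP'$ can lie on the short arc between them (any such point would form a strictly closer cross pair), they are consecutive in the ordering $(q_i)$ and produce the pattern $RL$. An equivalent route would therefore be to observe that the cyclic word $\left(D(q_i)\right)_{1\le i\le 2n}$ cannot be constant---the closest pair forces both an $L$ and an $R$---and that any nonconstant cyclic word over $\{L,R\}$ necessarily contains at least one occurrence of $RL$.

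I do not expect a genuine obstacle here. The only points that require care are that the cross-type minimum genuinely forces \emph{both} $C$-relations rather than just one, and that the circle/antipodal wraparound is handled correctly; the latter is precisely what Lemma~\ref{lem:green} already absorbs. The proof is therefore short once the closest-pair observation is isolated.
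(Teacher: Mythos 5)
Your proof is correct and is essentially identical to the paper's: both take a pair $(p,p')$ achieving the global minimum of $d(q,q')$ over $\calP\times\calP'$, observe that this forces $C(p)=p'$ and $C(p')=p$, and conclude via the direct implication of Lemma~\ref{lem:green}. Your write-up merely makes explicit the (easy) verification of the mutual-nearest-neighbor relations, which the paper states without comment, and your indexing $p\in[m_{i-1},m_i]$ is in fact the one consistent with the statement of Lemma~\ref{lem:green}.
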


\begin{proof}
Consider a pair $(p,x')$ achieving the minimum
\[
\min_{\substack{q\in\calP \\ y'\in\calP'}} d(q,y').
\]

Let $1\leq i \leq 2n$ be such that $p\in[m_i,m_{i+1}]$. Since $C(p)=x'$ and $C(x')=p$, we deduce from Lemma~\ref{lem:green} that $\sigma_i=2$.
\end{proof}

The next lemma finally shows that, between two regions of type~$0$, there is always a region of type~$2$.

\begin{lemma}
\label{lem:interlacing}
Assume that $\sigma_1=0$ and that there exists $2\leq i \leq n$ such that $\sigma_i=0$. Then there exists $2\leq j \leq i-1$ such that $\sigma_j=2$.
\end{lemma}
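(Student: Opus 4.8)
The plan is to read the types of the $2n$ regions directly off the cyclic word $\underline{D}=(D(q_1),\dots,D(q_{2n}))$ over the two-letter alphabet $\{L,R\}$, and then to exploit the elementary fact that, in any non-constant cyclic binary word, the occurrences of the pattern $RL$ and the occurrences of the pattern $LR$ must alternate. First I would record the dictionary furnished by the preceding lemmas. By Lemmas~\ref{lem:match} and~\ref{lem:green}, an index $i$ with $D(q_i)=R$ and $D(q_{i+1})=L$ produces a unique region of type $2$, namely the common region $[m_{k-1},m_k]$ containing both $q_i$ and $q_{i+1}$; conversely, any region of type $2$ contains two consecutive elements of $\calP\cup\calP'$ which, being mutually closest, realize the pattern $RL$. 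By Lemma~\ref{lem:red}, an index with $D(q_i)=L$ and $D(q_{i+1})=R$ produces a unique region of type $0$ strictly between $q_i$ and $q_{i+1}$, and conversely. Thus, reading the successive occurrences of $RL$ and $LR$ along the cycle lists exactly the regions of type $2$ and of type $0$, in the cyclic order in which they appear among the $2n$ regions.

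Next I would invoke the alternation principle: in a cyclic word over $\{L,R\}$ that is not constant, the maximal runs of $L$'s and of $R$'s alternate, so the transitions $R\to L$ and $L\to R$ alternate around the cycle. Here the word is indeed not constant: the hypothesis $\sigma_1=0$ forces at least one $LR$ (a region of type $0$), and Lemma~\ref{lem:existgreen} forces at least one $RL$ (a region of type $2$). Combining this alternation with the dictionary above, the regions of type $0$ and of type $2$ alternate in the cyclic order of the $2n$ regions: between two consecutive regions of type $0$ there lies exactly one region of type $2$, and vice versa.

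Finally I would conclude. The hypotheses say that regions $1$ and $i$, with $2\le i\le n<2n$, are both of type $0$. Since the value $\sigma_i=0$ guarantees that a region of type $0$ or $2$ is met by index $i$, scanning the region indices $1,2,\dots,i$ in increasing order, let $j$ be the first index after $1$ carrying a value in $\{0,2\}$; then $j\le i$. By the alternation of types, $j$ cannot carry another $0$ (two type-$0$ regions are never consecutive among the type-$0$ and type-$2$ regions), so $\sigma_j=2$, and since $\sigma_i=0$ we have $j\ne i$, hence $2\le j\le i-1$. This is exactly the desired index.

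The main obstacle I anticipate is purely bookkeeping: making the correspondence between a transition of $\underline{D}$ at position $i$ and a well-defined region index rigorous and \emph{monotone}, so that the cyclic order of transitions genuinely matches the cyclic order of regions, and being careful with the wrap-around at $m_0=0$ and region $1$. Once this order-preserving correspondence is set up cleanly, the alternation of patterns in the binary word transfers verbatim to the alternation of region types, and the conclusion is immediate.
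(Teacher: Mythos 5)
Your proof is correct, and it runs on the same engine as the paper's --- the dictionary of Lemmas~\ref{lem:match}, \ref{lem:green} and~\ref{lem:red} identifying occurrences of $RL$ (resp.\ $LR$) in $(D(q_i))_i$ with regions of type $2$ (resp.\ type $0$) --- but you deploy it globally where the paper argues locally. The paper applies the converse part of Lemma~\ref{lem:red} at both hypotheses: $\sigma_1=0$ gives $D(q_1)=R$, and $\sigma_i=0$ gives $D(q_r)=L$ for $q_r$ the last element of $\calP\cup\calP'$ before $m_{i-1}$; it then takes the \emph{smallest} $k$ with $D(q_k)=L$, so that $(D(q_{k-1}),D(q_k))=(R,L)$, and Lemmas~\ref{lem:match} and~\ref{lem:green} place $q_k$ in a type-$2$ region whose index $j$ is checked directly to satisfy $2\le j\le i-1$, using $m_1<q_k<m_{i-1}$. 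This local argument sidesteps the one obligation your route leaves open, namely that the transition-to-region correspondence is cyclic-order-preserving, so that the alternation of $RL$ and $LR$ patterns genuinely transfers to an alternation of type-$2$ and type-$0$ regions. That step, which you flag as bookkeeping, is real but routine: each region attached to a transition at position $i$ meets the arc $[q_i,q_{i+1}]$ (for $RL$ it contains $q_i$ and $q_{i+1}$ and nothing else of $\calP\cup\calP'$; for $LR$ it sits inside $I(q_i,q_{i+1})$), and since distinct regions are disjoint arcs, their cyclic order follows that of the transitions. In exchange for this extra work, your alternation statement is stronger than the lemma: it delivers at once existence \emph{and} uniqueness of the type-$2$ region between consecutive type-$0$ regions, i.e.\ the full interlacing condition, whereas the paper obtains uniqueness separately in the proof of Proposition~\ref{prop:nec} from the counting identity $s_0=s_2$. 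One small simplification: your appeal to Lemma~\ref{lem:existgreen} is superfluous, since in a cyclic binary word the numbers of $LR$ and $RL$ occurrences are equal, so the $LR$ forced by $\sigma_1=0$ via Lemma~\ref{lem:red} already guarantees an $RL$.
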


\begin{proof}
  Since $\sigma_1=0$, we have $q_1>m_1$ and by Lemma~\ref{lem:red} we
  have that $D(q_1)=R$. Denote by $q_r$ the largest element of $\calP \cup \calP'$
  smaller than $m_{i-1}$. By Lemma~\ref{lem:red} we have that
  $D(q_r)=L$. Denote by $k$ the smallest integer such that
  $D(q_k)=L$. We have that $2 \leq k \leq r$. Furthermore,
  $D(q_{k-1})=R$, hence by Lemma~\ref{lem:match} we have that
  $C(q_{k-1})=q_k$ and $C(q_k)=q_{k-1}$. Let $j$ be such that
  $q_k\in[m_{j-1},m_j]$. Clearly $2\leq j \leq i-1$ and by Lemma
 ~\ref{lem:green} we have that $\sigma_j=2$.
\end{proof}

We now have all the tools to prove Proposition \ref{prop:nec}:

\begin{proof}[Proof of Proposition~\ref{prop:nec}]
  Let $P_1,\ldots,P_n$ be $n$ points in cyclic order on the circle and let $\underline{\sigma} := (\sigma_1,\ldots,\sigma_{n})\in\{0,1,2\}^{n}$ be the signature of their occupancy word.
Define $s_0,s_1$ and $s_2$ to be respectively the number of occurrences of the values $0$, $1$ and $2$ in the signature. Then $n=s_0+s_1+s_2$ and since there are $n$ points, we also have
\[
n=\sum_{i=1}^{n} \sigma_i=s_1+2s_2.
\]
Combining these two equations we obtain that $s_0=s_2$.  From Lemma~\ref{lem:existgreen} we get that $s_2\geq1$. Therefore $s_0\geq 1$. Assume that $1\leq i<j \leq n$ are such
that $\sigma_i=\sigma_j=0$ and $\sigma_k>0$ for all $i < k < j$. Up to applying a translation, one may assume that $i=1$. By
Lemma~\ref{lem:interlacing} we deduce the existence of some $k$ such that $1 < k < j$ and $\sigma_k=2$. Given that $s_0=s_2$, such a $k$ is necessarily unique. Hence we conclude that $\underline{\sigma}$ is interlacing.
\end{proof}

\subsection{Realizing a word with interlacing signature}
\label{subsec:suff}

In this subsection we construct an explicit configuration of points
from a word whose signature is interlacing.
\begin{proposition}
  \label{prop:suff}
  Let $n\geq3$ and let $\underline{v}=(v_1,\ldots,v_{2n})\in\{0,1\}^{2n}$ be such that its signature $\underline{\sigma}=(\sigma_1,\ldots,\sigma_{n})\in\{0,1,2\}^{n}$ is interlacing. Then there exists a configuration of points on the circle having $\underline{v}$ as an occupancy word.
\end{proposition}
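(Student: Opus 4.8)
The plan is to run the analysis of Subsection~\ref{subsec:nec} backwards. There we saw, through Lemmas~\ref{lem:match},~\ref{lem:green} and~\ref{lem:red}, that the signature is entirely read off from the cyclic sequence of looking directions $(D(q_i))_{1\le i\le 2n}$: an occurrence of $2$ corresponds to a pattern $RL$ (a matched pair $p,C(p)$ sitting inside one region), an occurrence of $0$ to a pattern $LR$ (two consecutive bisectors with nothing between them), and an occurrence of $1$ to a repeated letter $RR$ or $LL$ (a single marker bounded by one bisector on each side). Since the whole configuration is invariant under the antipodal map $x\mapsto x+\tfrac12$, the data are $n$-periodic: $q_{i+n}=q_i+\tfrac12$ and $D(q_{i+n})=D(q_i)$. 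It therefore suffices to exhibit a generic point set whose direction word $(D(q_1),\dots,D(q_n))$, repeated twice, produces the prescribed interlacing signature, and for which the black point of each region lands on the side dictated by $\underline{v}$.

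First I would build the combinatorial skeleton. Reading $\underline{\sigma}$ cyclically and deleting the letters $1$, the interlacing hypothesis says precisely that the surviving letters alternate $2,0,2,0,\dots$ with equally many $0$'s and $2$'s. I use this to write down the target cyclic arrangement of the $4n$ markers $\calP\sqcup\calP'\sqcup\calL\sqcup\calL'$: each $2$ becomes a region holding one element of $\calP$ and one of $\calP'$ close together; each $0$ a region bounded by two adjacent bisectors, one in $\calL$ and one in $\calL'$, with nothing inside; each $1$ a region with a single marker, whose type ($\calP$ versus $\calP'$) I choose according to whether $\underline{v}$ places the black point in region $i$ or in region $i+n$ (for the type-$2$ regions both choices are forced since $v_i=v_{i+n}=1$). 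The role of the interlacing condition is exactly to guarantee that this arrangement closes up consistently, namely that the markers of $\calP$ and $\calL$ strictly alternate (as they must for genuine perpendicular bisectors), that the same holds for $\calP'$ and $\calL'$, and that the whole word is $n$-periodic under the antipodal symmetry.

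The metric realization is where the real work lies, and it is the main obstacle. The only genuine freedom is the choice of the angles $p_1<\dots<p_n$: once they are fixed, each $l_i$ is forced to be the arc-midpoint of $p_i$ and $p_{i+1}$, and $\calP',\calL'$ are their half-turn images, so the bisectors cannot be positioned independently of the points. One must therefore select the $p_i$ so that the induced cyclic order of the $4n$ numbers $\{p_i,\,p_i+\tfrac12,\,l_i,\,l_i+\tfrac12\}$ agrees with the target skeleton, equivalently so that every nearest-opposite-type relation $C(\cdot)$ comes out as designed. I would achieve this by a clustering construction controlled by a small parameter $\varepsilon>0$: place the two markers of each type-$2$ region as a tight pair, make the two bisectors bounding each type-$0$ region nearly coincide, and separate successive clusters on the scale $1$; a small generic perturbation then ensures the genericity hypothesis. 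One finally checks that for all sufficiently small $\varepsilon$ the forced midpoints fall in the intended regions and every $C$-relation is the prescribed one, so that by Lemmas~\ref{lem:green} and~\ref{lem:red} the resulting signature is $\underline{\sigma}$, and by the type choices above the occupancy word is exactly $\underline{v}$. Verifying these midpoint and neighbour relations simultaneously, rather than each in isolation, is the delicate point, precisely because the bisectors are slaved to the chosen points.
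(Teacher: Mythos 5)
Your combinatorial skeleton is sound and matches what the paper does implicitly (choosing the type $\calP$ versus $\calP'$ of the single marker in each type-$1$ region according to $\underline{v}$, forcing tight $\calP$--$\calP'$ pairs at the $2$'s and adjacent $\calL$--$\calL'$ bisectors at the $0$'s). But the part you defer as ``where the real work lies'' is in fact the entire mathematical content of the proposition, and your sketch of it does not contain the idea that makes it work. The issue is precisely the one you flag: within a maximal run of $1$'s between a $2$ and a $0$, the markers alternate arbitrarily between $\calP$ and $\calP'$ (as dictated by $\underline{v}$), so the nearest same-type predecessor of a marker $r_h$ can sit far back in the run, possibly at the anchoring type-$2$ cluster itself. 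The bisector of $r_h$ and that far-back point is then forced to lie roughly halfway back along the run. With the ``tight cluster of scale $\varepsilon$'' placement you propose (markers in a run at essentially equal tiny spacing), this bisector lands near the middle of the run rather than in the region immediately preceding $r_h$, and the occupancy word of the constructed configuration is not $\underline{v}$. So ``one finally checks that the forced midpoints fall in the intended regions'' is not a routine verification for your placement: it is false for the most natural choice of it.

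The paper's proof (Subsection~\ref{subsec:suff}) resolves exactly this with a quantitative device absent from your proposal: inside each ascending or descending component the gaps from the anchor are taken geometrically increasing, $r_h = r_{i_k} \pm \eta\left(2^{|h-i_k|}-1\right)$, i.e.\ successive gaps $\eta, 2\eta, 4\eta, \dots$. The doubling guarantees that for \emph{every} possible position of the nearest same-type predecessor of $r_h$ within the run, the resulting bisector still falls strictly between $r_{h-1}$ and $r_h$ (this is the content of Lemma~\ref{lemma:m}, via the inequality $\tfrac{r_{i_k}+r_h}{2} > r_{h-1}$), and a global bound $\eta(2^n-1) < \tfrac{1}{4s}$ keeps distinct clusters from interfering. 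Your second difficulty --- ``making the two bisectors bounding each type-$0$ region nearly coincide'', which indeed cannot be imposed directly since bisectors are slaved to the points --- is also resolved by this same construction: placing each $0$ at the exact midpoint of its two neighbouring anchors makes both bounding bisectors land within $\tfrac{1}{8s}$ of that midpoint automatically. Finally, the closing perturbation step ($\tilde r_k = r_k + k\epsilon$) you invoke does appear in the paper, but only after the relative positions of all $4n$ markers have been pinned down by the explicit construction; it cannot repair a placement whose midpoints fall in the wrong regions. In short: right strategy, but the proposal stops exactly at the step that constitutes the proof.
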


\begin{proof}
We fix $n\geq 3$ and such a word $\underline{v}$. Up to applying a rotation one may assume that $\sigma_1=0$.

Denote by $T$ (resp.~$Z$) the subset of all $1\leq i \leq 2n$ such that $\sigma_i=2$ (resp.~$\sigma_i=0$). Here again the indices of $\sigma$ are considered modulo $n$. $T$ and $Z$ are respectively the locations of twos and zeros. The set $\{1,\ldots,2n\}\setminus(T\cup Z)$ is composed of several connected components, which are the intervals of integers between two consecutive elements of $T\cup Z$ (note that some of these intervals may be empty). We call such a connected component an \emph{ascending
  component} (resp.~a \emph{descending component}) if it is of the form $I_{2n}(i,j)$ with $i\in Z$ and $j\in T$ (resp.~$i\in T$ and
$j\in Z$). In particular, for all $k\in I_{2n}(i,j)$ we have $\sigma_k=1$. Defining $s := \# T = \# Z$, we let $i_1<\cdots<i_s$ be the ordering of $T$, and $j_1<\cdots<j_s$ be the ordering of $Z$. In particular, $j_1=1$.

To each $1\leq i \leq 2n$, we associate a position $r_i$ in $(0,1]$, which in the end of the process will be the position of a point (resp.~the antipode of a point) if $v_i=1$ (resp.~if $v_i=0$). The idea is to guarantee that for every $h$ in a descending component $I_{2n}(i_k,j_{k+1})$ (resp.~in an ascending component $I_{2n}(j_k,i_k)$), the position $r_h$ is closer to $r_{i_k}$ than to its closest neighbor on the right (resp.~left). Using the terminology of Subsection~\ref{subsec:nec}, we make sure that every point or antipode of a point in a descending (resp.~an ascending) component looks to the left (resp.~right). We will then check explicitly that the configuration of points thus constructed has occupancy word $\underline{v}$.

First, for all $1\leq k \leq s$, we set
\begin{equation*}
  \begin{split}
    r_{i_k} & = \frac{k}{s},\\
    r_{j_k} & = \frac{r_{i_{k-1}}+r_{i_k}}{2} = \frac{2k-1}{2s}.
  \end{split}
\end{equation*}
In the definition of $r_{j_1}$ we used the notational convention $r_{i_0}=0$. Since $s$ is even, the $2k$ points constructed arise in antipodal pairs, but this absence of genericity will not be an issue. In the last paragraph of the proof we will explain how one can perturb the positions to make them generic without changing the occupancy word.

Let $\eta>0$ be small enough ($\eta<\tfrac{1}{s2^{n+2}}$ will suffice
for our purposes). For $1\leq k \leq s$, consider the $k$th descending component, that is, $I_{2n}(i_k,j_{k+1})$. For every
$h\in I_{2n}(i_k,j_{k+1})$, we set
\begin{equation*}
  r_h = r_{i_k} + \eta \left( 2^{h-i_k} -1 \right) = \frac{k}{s} +  \eta \left( 2^{h-i_k} -1 \right) .
\end{equation*}
Similarly, for $1\leq k \leq s$, consider the $k$th ascending component, that is, $I_{2n}(j_k,i_{k})$. For every $h\in I_{2n}(j_k,i_{k})$, we set
\begin{equation*}
  r_h = r_{i_k} - \eta \left( 2^{i_k-h} -1 \right) = \frac{k}{s} -
  \eta \left( 2^{i_k-h} -1 \right).
\end{equation*}

\begin{figure}[h]
  \centering
  \def\svgwidth{10cm}
  
  \begingroup%
  \makeatletter%
  \providecommand\color[2][]{%
    \errmessage{(Inkscape) Color is used for the text in Inkscape, but the package 'color.sty' is not loaded}%
    \renewcommand\color[2][]{}%
  }%
  \providecommand\transparent[1]{%
    \errmessage{(Inkscape) Transparency is used (non-zero) for the text in Inkscape, but the package 'transparent.sty' is not loaded}%
    \renewcommand\transparent[1]{}%
  }%
  \providecommand\rotatebox[2]{#2}%
  \newcommand*\fsize{\dimexpr\f@size pt\relax}%
  \newcommand*\lineheight[1]{\fontsize{\fsize}{#1\fsize}\selectfont}%
  \ifx\svgwidth\undefined%
    \setlength{\unitlength}{182.96721691bp}%
    \ifx\svgscale\undefined%
      \relax%
    \else%
      \setlength{\unitlength}{\unitlength * \real{\svgscale}}%
    \fi%
  \else%
    \setlength{\unitlength}{\svgwidth}%
  \fi%
  \global\let\svgwidth\undefined%
  \global\let\svgscale\undefined%
  \makeatother%
  \begin{picture}(1,0.1883016)%
    \lineheight{1}%
    \setlength\tabcolsep{0pt}%
    \put(0,0){\includegraphics[width=\unitlength,page=1]{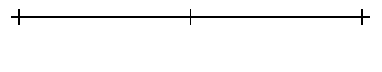}}%
    \put(0.04151218,0.17729336){\color[rgb]{0,0,0}\makebox(0,0)[t]{\lineheight{1.25}\smash{\begin{tabular}[t]{c}$r_{i_{k-1}}$\end{tabular}}}}%
    \put(0.96428466,0.17946716){\color[rgb]{0,0,0}\makebox(0,0)[t]{\lineheight{1.25}\smash{\begin{tabular}[t]{c}$r_{i_{k}}$\end{tabular}}}}%
    \put(0.50006211,0.1787426){\color[rgb]{0,0,0}\makebox(0,0)[t]{\lineheight{1.25}\smash{\begin{tabular}[t]{c}$r_{j_{k}}$\end{tabular}}}}%
    \put(0,0){\includegraphics[width=\unitlength,page=2]{construction.pdf}}%
    \put(0.05695666,0.09086436){\color[rgb]{0,0,0}\makebox(0,0)[t]{\lineheight{1.25}\smash{\begin{tabular}[t]{c}$\eta$\end{tabular}}}}%
    \put(0.09087335,0.09090585){\color[rgb]{0,0,0}\makebox(0,0)[t]{\lineheight{1.25}\smash{\begin{tabular}[t]{c}$2\eta$\end{tabular}}}}%
    \put(0.14816659,0.09089958){\color[rgb]{0,0,0}\makebox(0,0)[t]{\lineheight{1.25}\smash{\begin{tabular}[t]{c}$4\eta$\end{tabular}}}}%
    \put(0.90580677,0.0924948){\color[rgb]{0,0,0}\makebox(0,0)[t]{\lineheight{1.25}\smash{\begin{tabular}[t]{c}$2\eta$\end{tabular}}}}%
    \put(0.93972328,0.09253629){\color[rgb]{0,0,0}\makebox(0,0)[t]{\lineheight{1.25}\smash{\begin{tabular}[t]{c}$\eta$\end{tabular}}}}%
    \put(0,0){\includegraphics[width=\unitlength,page=3]{construction.pdf}}%
    \put(0.05695666,0.09086436){\color[rgb]{0,0,0}\makebox(0,0)[t]{\lineheight{1.25}\smash{\begin{tabular}[t]{c}$\eta$\end{tabular}}}}%
    \put(0.09087335,0.09090585){\color[rgb]{0,0,0}\makebox(0,0)[t]{\lineheight{1.25}\smash{\begin{tabular}[t]{c}$2\eta$\end{tabular}}}}%
    \put(0.49812175,0.00170897){\color[rgb]{0,0,0}\makebox(0,0)[t]{\lineheight{1.25}\smash{\begin{tabular}[t]{c}$1/s$\end{tabular}}}}%
  \end{picture}%
\endgroup%

  \caption{Schematic construction of the values of $r_h$, on a descending component $I_{2n}(i_{k-1},j_k)$ and an ascending component $I_{2n}(j_k,i_k)$. The multiples of $\eta$ written below are distances.}
\end{figure}

By the symmetry of the word, there cannot be more than $n$ points in a component. Therefore, in both of those cases,
\begin{equation}
  \label{eq:distmax}
  d(r_h,r_{i_k}) < \eta (2^n-1) <\frac{1}{4s}.
\end{equation}
Hence the constructed positions of ascending and descending components lie in disjoint intervals.

Now let $\calP$ be the subset of positions $\left\{ r_i \mid 1\leq i
  \leq 2n, v_i=1\right\}$. We claim that this configuration of points has
$\underline{v}$ as its occupancy word. As in the previous subsection, we set
$\calP' = \{p+\tfrac12, p \in \calP\}$, and $\calL$ (resp.~$\calL'$)
the positions of the bisectors of $\calP$ (resp.~$\calP'$). We also
set $\calM$ to be the collection of $\calL \cup \calL'$, possibly with
repetitions, since the configuration constructed may not yet satisfy the genericity assumptions.

We now characterize the positions of these bisectors.

\begin{lemma}
  \label{lemma:m}
  For every $h$ in a descending (resp.~ascending) component, there is a unique element of
  $\calM$ in $I(r_{h-1},r_h)$ (resp.~in $I(r_h,r_{h+1})$). For every $1\leq k \leq s$, there are exactly two elements of~$\calM$ in the set $I\left( \frac{2k-1}{2s} - \frac{1}{8s}, \frac{2k-1}{2s} + \frac{1}{8s} \right)$. Moreover, these are all the
  $2n$ elements of $\calM$.
\end{lemma}

\begin{proof}[Proof of Lemma~\ref{lemma:m}]
  Let $1\leq k \leq s$. Let $h$ be in the descending component
  $I_{2n}(i_k,j_{k+1})$. We distinguish two cases, depending on the
  value of $v_h$.

  If $v_h=1$, then $r_h\in\calP$. Moreover, $v_{i_k}=1$ since $i_k\in
  T$, hence $r_{i_k}\in \calP$. Therefore, the rightmost element of
  $\calP$ smaller than $r_h$ belongs to the set $\{r_{i_k},r_{i_k
    +1},\dots,r_{h-1}\}$. Hence the position $m_h$ of the bisector of this point and
  $r_{h}$ satisfies
  \begin{equation*}
    \frac{r_{i_k}+r_h}{2} \leq m_h \leq \frac{r_{h-1}+r_h}{2} < r_h.
  \end{equation*}
  Now notice that the left-hand side is $r_{i_k} + \eta
  (2^{h-i_k-1}-\tfrac12)$, which is strictly bigger than
  $r_{h-1}$. Hence $m_h \in I(r_{h-1},r_h)$.

  If $v_h=0$, then as $\sigma_h=1$, we have $v_{h+n}=1$. Hence there
  is an element of $\calP$ at position $r_{h+n}$, and by the
  invariance under translation of the word by $n$, we have
  $r_{h+n}=r_h+\tfrac12$. Therefore, $r_h\in \calP'$. Similarly, as
  $\sigma_{i_k}=2$, we have $v_{i_k+n}=1$ so that $r_{i_k+n}\in \calP$ and
  $r_{i_k}\in \calP'$. From there, we conclude as in the previous case.

  For $h$ in an ascending component, the proof is identical.

  For the second point of the lemma, consider the index $j_k \in Z$.

 As both $r_{i_{k-1}}$ and $r_{i_k}$ belong to $\calP$, the elements of $\calP$ directly to the left and right of $r_{j_k}$ belong, respectively, to $\{r_{i_{k-1}}, \dots, r_{j_k -1}\}$ and to $\{r_{j_{k}+1}, \dots,
  r_{i_k}\}$. Hence the position of their bisector $m_{j_k}\in \calL$ satisfies
  \begin{equation*}
    \frac{r_{i_{k-1}}+r_{j_{k}+1}}{2} \leq m_{j_k} \leq \frac{r_{j_k-1}+r_{i_k}}{2}.
  \end{equation*}
  As $j_k-1$ belongs to the descending component
  $I_{2n}(i_{k-1},j_k)$, by~\eqref{eq:distmax} we have $r_{j_{k}-1} < r_{i_{k-1}} + \frac{1}{4s}$, hence the right-hand side is smaller than $\frac{r_{i_{k-1}} + r_{i_{k}}}{2} + \frac{1}{8s}$, which is the expected bound. The left-hand side is treated similarly. Then, an identical proof shows that there is an element of $\calL'$ in the same interval.

  Clearly the elements of $\calM$ coming from ascending and descending components are disjoint. Those coming from the second point are also disjoint among themselves, as even if two may share the same position, only one of them will belong to $\calL$, and the other to
  $\calL'$. The fact that these two families are disjoint is an easy consequence of~\eqref{eq:distmax}. Hence we have constructed two elements of $\calM$ for each element of $Z$, and one for each element of $(T\cup Z)^c$, which is in total $2 (\# Z) + 2n - (\# T + \# Z) = 2n$.
\end{proof}

Let $\underline{w}$ be the occupancy word of $\calP$. We now have all
the tools to prove that $\underline{w}=\underline{v}$. For any $1\leq
k \leq s$, consider the interval $I[r_{i_{k-1}},r_{i_k})$. In that part,
the positions and order of the elements of $\calM$ are given by
Lemma~\ref{lemma:m}: for every $h$ in the descending component
$I_{2n}(i_{k-1},j_k)$ there is one $m_h \in I(r_{h-1},r_h) \cap \calM$; then there
are two distinct elements in $m_{j_k},m_{j_k}' \in I(r_{j_k-1},r_{j_k+1}) \cap \calM$; then for every $h$ in
the ascending component $I_{2n}(j_k,i_k)$ there is one $m_h \in
I(r_h,r_{h+1})$. Thus the part of $\underline{w}$ corresponding to
this interval can be described as: first a $1$ (for the region
containing $r_{i_{k-1}}$), then for every $h\in I_{2n}(i_{k-1},j_k)$,
either a $1$ or
a $0$ according to the value of $v_h$ (as by definition these are the
positions where a point of $\calP$ has been put), then a $0$ (for the
region corresponding to $m_{j_k},m_{j_k}'$), then for every $h \in
I_{2n}(j_k,i_k)$ either a $0$ or a $1$ according to the value of
$v_h$. This is clearly the same as $\underline{v}$ at those
indices. This being true for any $k$, by concatenation we get that $\underline{w}=\underline{v}$.

One issue that may arise is that the configuration constructed above
is not generic, in the sense that two lines may coincide, which occurs
for example when a descending component and the following ascending
component are empty. In order to avoid such issues, we slightly perturb the
configuration by fixing $\epsilon>0$ and defining for every $1\leq k\leq 2n$, the position $\tilde{r}_k=r_k+k\epsilon$. For
$\epsilon$ small enough ($\epsilon < \frac{\eta}{2n}$ suffices), the relative position of the perturbed points and lines is the same as the unperturbed one, while two lines can no longer coincide.
\end{proof}

\subsection{Enumerating realizable words and bracelets}
\label{subsec:enumeration}

We end this section by computing the cardinality of the set $\# \calW_n$.

\begin{proof}[Proof of Corollary~\ref{cor:enum}]
  To choose a realizable word $\underline{v}$, one may first choose its interlacing signature $\underline{\sigma}$. This amounts to choosing an integer $1\leq \ell\leq \lfloor \frac{n}{2} \rfloor$ such that $\underline{\sigma}$ will have $2\ell$ letters $0$ or $2$ and choose whether the first one is a $0$ or a $2$. We have $2\binom{n}{2\ell}$ choices for the positions of these letters and the value of the first one. Then for every $1\leq i\leq 2n$ such that $\sigma_i = 1$ (here the index $i$ is considered modulo $2n$), one has to choose if $v_i$ is $1$ or $0$, under the condition that $v_{i+n}\neq v_i$. This gives $2^{n-2\ell}$ choices. Hence the number of realizable words of size $2n$ is
  \begin{align}
      \# \calW_n & = 2 \sum_{\ell=1}^{\lfloor \frac{n}{2} \rfloor} \binom{n}{2\ell}
                2^{n-2\ell} \\
          & = 2 \sum_{\substack{2\leq k \leq n \\ k \text{ even}}} \binom{n}{k}
                2^{n-k}.
  \end{align}
  Introducing
  \[
  X = 2 \sum_{\substack{1\leq k \leq n \\ k \text{ odd}}}
  \binom{n}{k} 2^{n-k},
  \]
  one easily gets $2^{n+1}+\# \calW_n+X = 2 \times
  3^n$ and $2^{n+1} + \# \calW_n - X = 2$, and the result follows.

Regarding realizable bracelets, we have
\[
\frac{\# \calW_n}{4n}\leq \# \calB_n \leq \# \calW_n,
\]
which implies the
  announced result.
\end{proof}

\section{Bracelets for uniformly random points}
\label{sec:uniform}

In this section, we study the following model: fix $n\geq3$ and draw at random $n$ points independently and uniformly distributed on the unit circle. Most of our proofs here rely on a model of black and white dots with exponential spacings, which can be coupled to our original model of uniform points on the circle. This new model is presented in Subsection~\ref{ssec:bw}. We prove Proposition~\ref{prop:rational} and Proposition~\ref{prop:nicebracelet} concerning the probability of individual bracelets to occur in Subsection~\ref{ssec:individual}, then Theorem~\ref{thm:greenexpectation} about the expected number of regions of each type in Subsection~\ref{ssec:greenexpectation},  Theorem~\ref{thm:greenredlengths} about the expected total length of regions of type~$k$ for every $k\in\{0,1,2\}$ in Subsection~\ref{ssec:regionlengths} and finally Theorem~\ref{thm:functionalcv} about the equidistribution of the regions of type~$k$ in Subsection~\ref{ssec:equidistributed}.

\subsection{Black and white dots with exponential spacing}
\label{ssec:bw}

In this section we identify the unit circle with the half-open interval $[0,1)$, by the inverse of the map $t \mapsto e^{2i\pi t}$. Note that this differs from the convention of Section~\ref{sec:charac} where the circle was identified to $(0,1]$; this discrepancy is due to convenience of notation in both cases.

Let
$p_1,\ldots,p_n$ be $n$ points in general position in $[0,1)$. We apply a global rotation to the $n$ points so that
$p_1=0$. Now the region with label $1$ is defined to be the region containing $e^{i\epsilon}$
for all $\epsilon>0$ small enough. Recall that $p_i'=p_i+\tfrac{1}{2} \mod 1$ for every $1\leq i \leq
n$. Denote by $\calP$ (resp.~$\calP'$) the set of all $p_i$ (resp.~of all $p_i'$) and note that the interval $[0,\tfrac{1}{2})$ contains exactly $n$ elements of $\calP\cup\calP'$. For every $0\leq i\leq n-1$, we define the variables $X_i$ and $\Gamma_i$ such that the following two conditions are satisfied:
\begin{itemize}
 \item $0=X_0<X_1<\cdots<X_{n-1}<\frac{1}{2}$ is an ordering of the intersection of $[0,\tfrac{1}{2})$ with the set $\calP\cup\calP'$;
 \item for every $0\leq i\leq n-1$, $\Gamma_i=\indset{X_i\in\calP}$.
\end{itemize}

Each $X_i$ is called a \emph{black dot} (resp.~\emph{white dot}) if $\Gamma_i=1$ (resp.~$\Gamma_i=0$). It is clear that from the position of the black and white dots we recover the set $\calP$ up to a global rotation. Furthermore, taking the $p_i$ to be i.i.d. uniform on $[0,1)$ induces the probability distribution on dots described as follows:
\begin{itemize}
 \item $X_0=0$ is a black dot;
 \item $(X_1,\ldots,X_{n-1})$ are the ordering statistics of $n-1$ i.i.d. uniform random variables in $[0,\tfrac{1}{2})$;
 \item $(\Gamma_1,\ldots,\Gamma_{n-1})$ are i.i.d. Bernoulli variables of parameter $\tfrac{1}{2}$, independent of the $X_i$.
\end{itemize}
We also adopt the convention that $X_n=\tfrac{1}{2}$ and $\Gamma_n=0$. The $\Gamma_i$'s are called colors.

For every $1\leq i \leq n$ we define $S_i = X_i-X_{i-1}$ to be the spacing between two consecutive dots. The main tool to understand the joint behaviour of the $S_i$'s is the following lemma (see e.g. \cite[Section~$4.1$]{Pyk65} for a proof), which allows us to get rid of the condition that the sum of the variables $S_i$ should be equal to $\tfrac{1}{2}$.

\begin{lemma}[\cite{Pyk65}]
\label{lemma:exp_var}
Fix $n \geq 1$. If $T_1,\ldots,T_n$ are i.i.d. exponential variables of parameter $1$ and $Y_n := \sum_{i=1}^n T_i$, then $(T_1/2Y_n, \ldots, T_n/2Y_n)$ is independent of $Y_n$ and distributed as $(S_1, \ldots, S_n)$.
\end{lemma}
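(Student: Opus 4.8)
The plan is to follow Pyke's classical argument and to prove the two assertions—independence from $Y_n$ and the equality in distribution with $(S_1,\dots,S_n)$—simultaneously, by performing a single explicit change of variables on the exponential side and matching the resulting law with the law of the uniform spacings. Concretely, I will show that both $(T_1/Y_n,\dots,T_n/Y_n)$ and $(S_1,\dots,S_n)$ are distributed according to the uniform (Dirichlet) law on the corresponding simplex, and that the radial variable $Y_n$ splits off as an independent factor on the exponential side.

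First I would treat the exponential side. Consider the map $\phi\colon (t_1,\dots,t_n)\mapsto (r_1,\dots,r_{n-1},y)$ defined by $r_i=t_i/y$ for $1\le i\le n-1$ and $y=\sum_{j=1}^n t_j$, which is a smooth bijection from the positive orthant onto $\{r_i>0,\ \sum_{i=1}^{n-1}r_i<1\}\times(0,\infty)$. Its inverse is $t_i=r_i y$ for $i<n$ and $t_n=\bigl(1-\sum_{i=1}^{n-1}r_i\bigr)y$, and a direct computation gives Jacobian determinant $y^{n-1}$. Since the joint density of $(T_1,\dots,T_n)$ is $e^{-\sum_i t_i}=e^{-y}$ on the positive orthant, pushing forward shows that $(R_1,\dots,R_{n-1},Y_n)$ has density proportional to $y^{n-1}e^{-y}$ on $\{r_i>0,\ \sum_i r_i<1\}\times(0,\infty)$. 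This density factorizes as the constant density $(n-1)!$ in the $r$-variables times the $\mathrm{Gamma}(n,1)$ density $\tfrac{1}{(n-1)!}y^{n-1}e^{-y}$ in $y$. The factorization yields at once that $(R_1,\dots,R_{n-1})$ is independent of $Y_n$ and is uniform on the open standard simplex; equivalently $(T_1/Y_n,\dots,T_n/Y_n)$ is Dirichlet$(1,\dots,1)$, and since its last coordinate $1-\sum_{i<n}R_i$ is a function of $(R_1,\dots,R_{n-1})$, the whole vector is independent of $Y_n$.

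Next I would treat the spacing side. By construction $(X_1,\dots,X_{n-1})$ are the order statistics of $n-1$ i.i.d. uniform points, so their joint density is constant on the ordered region; the linear map $(x_1,\dots,x_{n-1})\mapsto(s_1,\dots,s_{n-1})$ has unit Jacobian, and together with the endpoint conventions $X_0=0$ and $X_n$ this exhibits $(S_1,\dots,S_n)$ as uniformly distributed on its simplex, which coincides with the law obtained on the exponential side. Hence $(T_1/Y_n,\dots,T_n/Y_n)\egaldistr(S_1,\dots,S_n)$, and combined with the independence statement already established this proves the lemma.

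The main obstacle is the change-of-variables computation of the exponential side—getting the Jacobian $y^{n-1}$ right and checking that the image region is exactly the product of the simplex with the half-line—since it is this one computation that simultaneously delivers both the independence from $Y_n$ and the Dirichlet description of the spacing law. Once it is in place, the identification with the uniform spacings is a routine density comparison, requiring only care with the boundary conventions $X_0=0$, $X_n$ and with the terminal spacing $S_n$.
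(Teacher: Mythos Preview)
The paper does not actually prove this lemma but simply defers to Pyke~\cite[Section~4.1]{Pyk65}; your change-of-variables argument (Jacobian $y^{n-1}$, factorization into a Gamma$(n,1)$ radial part and a Dirichlet$(1,\dots,1)$ angular part, then identification with uniform spacings) is precisely the classical proof one finds there, and it is correct. One tiny caveat tied to the paper's conventions rather than your reasoning: since here $X_n=\tfrac12$, the spacings $(S_1,\dots,S_n)$ sum to $\tfrac12$, so the identification with $(T_1/Y_n,\dots,T_n/Y_n)$ is really up to the global scale factor $2$---which is exactly how the paper uses the lemma in the transfer Lemma~\ref{lem:fundamental}.
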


In the remainder of this section we will frequently use the \emph{exponential spacing model}, defined as follows. Let $T_1,\ldots,T_n$ be $n$ i.i.d. $Exp(1)$ variables and, for every $0\leq k\leq n$, define the dot $Y_k=\sum_{i=1}^k T_i$. Define also $\Gamma_1,\ldots,\Gamma_{n-1}$ to be $n-1$ i.i.d. Bernoulli variables of parameter $\tfrac{1}{2}$ independent of the $T_i$, and set in addition $\Gamma_0=1$ and $\Gamma_n=0$. It will also be useful to extend the definition of the $Y_i$ and $\Gamma_i$ to all $-n \leq i \leq 2n-1$ by setting for every $0\leq i\leq n-1$
\[
(Y_{i\pm n},\Gamma_{i\pm n}):=(Y_i \pm Y_n,1-\Gamma_i).
\]
With this extension, every dot $X_i$ with $0\leq i\leq n-1$ has at least one dot of the opposite color to its left and to its right.

By Lemma~\ref{lemma:exp_var}, up to global scale, the variables $Y_i$
are distributed like the variables $X_i$. Hence, replacing the $X_i$ by the $Y_i$ does not change the probability of each bracelet to occur. Thus, we can use the exponential spacing model to prove results about probabilities of bracelets (Proposition~\ref{prop:nicebracelet} and Theorem~\ref{thm:greenexpectation}). In order to prove Theorem~\ref{thm:greenredlengths} about expected lengths of intervals, we need to overcome the problem that the global scale to go between the $X_i$ and the $Y_i$ is random: it is the total length of the interval $Y_n$. This is done via Lemma~\ref{lem:fundamental}. If $\underline{t}=(t_1,\ldots,t_n)\in\R_+^n$ and $\underline{\gamma}=(\gamma_0,\ldots,\gamma_{n-1})\in\{0,1\}^n$ such that $\gamma_0=1$, we define for every $k\in\{0,1,2\}$  $L_k(\underline{t},\underline{\gamma})$ to be the sum of the lengths of the regions of type~$k$ in a bracelet constructed from $n$ points whose spacings (resp.~colors) are given by $\underline{t}$ (resp.~$\underline{\gamma}$).

\begin{lemma}[Transfer lemma for lengths]
\label{lem:fundamental}
Let $n\geq3$ and let $\underline{T}$ and $\underline{\Gamma}$ be the $n$-tuples of spacings and colors in the exponential spacing model. Set also $Y_n=T_1+\cdots+T_n$. For every $k\in\{0,1,2\}$, we have
\begin{equation}
\label{eq:fundamental}
\E \left[ L_k\left(\frac{1}{2Y_n}\underline{T},\underline{\Gamma}\right) \right] = \frac{\E[L_k(\underline{T},\underline{\Gamma})]}{\E[2Y_n]}.
\end{equation}
In particular,
\begin{equation}
\label{eq:transfer}
\E[L_{k,n}]= \frac{\E[L_k(\underline{T},\underline{\Gamma})]}{2n}.
\end{equation}

\end{lemma}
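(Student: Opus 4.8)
The plan is to reduce both identities to two structural facts: that $L_k(\,\cdot\,,\underline{\gamma})$ is \emph{homogeneous of degree one} in the spacing argument, and that the normalized spacings together with the colors are \emph{independent} of the total length $Y_n$. Once these are in hand, \eqref{eq:fundamental} becomes a one-line factorization of expectations.

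First I would record the homogeneity relation $L_k(\lambda\underline{t},\underline{\gamma})=\lambda\,L_k(\underline{t},\underline{\gamma})$ for every $\lambda>0$. Multiplying all spacings by $\lambda$ amounts to rescaling the circumference of the circle by $\lambda$, i.e.\ applying the dilation $x\mapsto\lambda x$ to all arc-coordinates. Since the perpendicular bisector of two points at arc-positions $a,b$ sits at the midpoint $\tfrac{a+b}{2}$, the bisectors are dilated by the same factor $\lambda$; hence the cyclic order of the points and bisectors, the occupancy word, and therefore the type of every region are all unchanged, while each individual arc-length is multiplied by $\lambda$. Summing the lengths of the (unchanged) collection of type-$k$ regions yields the claimed homogeneity.

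Next I would establish that $\bigl(\underline{T}/Y_n,\underline{\Gamma}\bigr)$ is independent of $Y_n$. By Lemma~\ref{lemma:exp_var}, $\underline{T}/Y_n$ is independent of $Y_n$; and since the colors $\underline{\Gamma}$ are by construction independent of the spacings $\underline{T}$, they are independent of the whole vector $(\underline{T}/Y_n,Y_n)$. Combining these, $\underline{T}/Y_n$, $\underline{\Gamma}$ and $Y_n$ are mutually independent, so any measurable function of $(\underline{T}/Y_n,\underline{\Gamma})$ is independent of $Y_n$; in particular $L_k(\underline{T}/Y_n,\underline{\Gamma})$ is. Note that $L_k(\underline{T}/Y_n,\underline{\Gamma})$ is bounded (by the deterministic total circumference $2$ of the normalized model) and $\E[Y_n]=n<\infty$, so all expectations below are finite.

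To prove \eqref{eq:fundamental}, I apply homogeneity with $\lambda=\tfrac{1}{2Y_n}$ to obtain $L_k\!\bigl(\tfrac{1}{2Y_n}\underline{T},\underline{\Gamma}\bigr)=\tfrac12 L_k(\underline{T}/Y_n,\underline{\Gamma})$, and with $\lambda=Y_n$ (writing $\underline{T}=Y_n\cdot\underline{T}/Y_n$) to obtain $L_k(\underline{T},\underline{\Gamma})=Y_n\,L_k(\underline{T}/Y_n,\underline{\Gamma})$. Taking expectations, the left-hand side of \eqref{eq:fundamental} equals $\tfrac12\,\E\bigl[L_k(\underline{T}/Y_n,\underline{\Gamma})\bigr]$, while the independence from the previous step lets me factorize $\E[L_k(\underline{T},\underline{\Gamma})]=\E[Y_n]\,\E\bigl[L_k(\underline{T}/Y_n,\underline{\Gamma})\bigr]$, so that the right-hand side equals $\tfrac{\E[Y_n]}{2\E[Y_n]}\,\E\bigl[L_k(\underline{T}/Y_n,\underline{\Gamma})\bigr]=\tfrac12\,\E\bigl[L_k(\underline{T}/Y_n,\underline{\Gamma})\bigr]$; the two sides agree. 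For \eqref{eq:transfer}, I use $\E[2Y_n]=2\E[Y_n]=2n$ (each $T_i$ has mean $1$), together with the coupling: the rescaled configuration $\tfrac{1}{2Y_n}\underline{T}$ lives on a circle of circumference $1$ and, by Lemma~\ref{lemma:exp_var}, has exactly the law of the configuration produced by $n$ i.i.d.\ uniform points, so $\E[L_{k,n}]=\E\bigl[L_k(\tfrac{1}{2Y_n}\underline{T},\underline{\Gamma})\bigr]$; substituting into \eqref{eq:fundamental} gives \eqref{eq:transfer}. The only genuinely delicate point is the independence factorization: it is essential that it is the \emph{normalized} vector $\underline{T}/Y_n$, not $\underline{T}$ itself, that is independent of $Y_n$, which is precisely why the factor $Y_n$ produced by homogeneity can be extracted as $\E[Y_n]$ and cancels against $\E[2Y_n]=2\E[Y_n]$.
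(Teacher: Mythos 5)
Your proof is correct and follows essentially the same route as the paper: both rest on the degree-one homogeneity of $L_k$ in the spacing argument together with Lemma~\ref{lemma:exp_var}, which decouples the normalized spacings (and, jointly, the colors) from $Y_n$. The paper packages the factorization as a conditional expectation given $Y_n$ (pulling out the $Y_n$-measurable factor $2Y_n$ and noting that the remaining conditional expectation is independent of $Y_n$, hence constant), whereas you factorize the product $Y_n\,L_k(\underline{T}/Y_n,\underline{\Gamma})$ directly by independence; these are the same argument, and your explicit justification of homogeneity, of the colors' independence, and of integrability only makes the write-up more careful.
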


Note that in Equation~\eqref{eq:transfer}, the expectation on the left-hand side refers to a model on a circle of unit length while the expectation on the right-hand side refers to the exponential spacing model.

\begin{proof}[Proof of Lemma~\ref{lem:fundamental}]
The idea behind the proof is simply that the quantity $\E[L_k(\underline{T},\underline{\Gamma})]$ behaves linearly in $Y_n$. More precisely, observe that
\begin{equation}
\E\left[ L_k(\underline{T},\underline{\Gamma}) | Y_n \right] = 2Y_n \E\left[ L_k\left(\frac{1}{2Y_n}\underline{T},\underline{\Gamma}\right) \bigg| Y_n \right].
\end{equation}
By Lemma~\ref{lemma:exp_var}, $\E[L_k(\tfrac{1}{2Y_n}\underline{T},\underline{\Gamma}) | Y_n] $ is a random variable independent of $Y_n$. Taking the expectation on both sides yields Equation~\eqref{eq:fundamental}. The second statement is a consequence of Lemma~\ref{lemma:exp_var} and the fact that $\E[Y_n]=n$.
\end{proof}

In order to reconstruct the bracelet from the colored dots, we partition the configuration space according to the oriented colored dot configuration realized by the colored dots, which we define below.

\begin{definition}
Let $0<x_1<\cdots<x_{n-1}<x_n$ and let $(\gamma_0,\ldots,\gamma_{n-1})\in\{0,1\}^n$ with $\gamma_0=1$. Extend the $x_i$ and $\gamma_i$ to all $-n \leq i \leq 2n-1$ by setting for every $0\leq i\leq n-1$
\[
(x_{i\pm n},\gamma_{i\pm n}):=(x_i \pm x_n,1-\gamma_i).
\]
For every $0\leq i\leq n-1$, we set $d_i=L$ (resp.~$d_i=R$) if the dot of color $1-\gamma_i$ closest to $x_i$ lies to the left (resp.~right) of $x_i$; in Section~\ref{sec:charac} this was termed as $x_i$ looks to the left (resp.~right). The \emph{oriented colored dot configuration} (OCDC) associated with the $x_i$ and $\gamma_i$ is the $n$-tuple
\[
(o_0,\ldots,o_{n-1})\in\left\{B_L,B_R,W_L,W_R\right\}^n
\]
where $o_i=B_{d_i}$ (resp.~$o_i=W_{d_i}$) if $\gamma_i=1$ (resp.~$\gamma_i=0$).
\end{definition}

It follows from Lemmas~\ref{lem:match},~\ref{lem:green} and~\ref{lem:red} that the OCDC determines the bracelet. However, each bracelet may be realized by several OCDCs.

A useful tool to shorten computations in the remainder of this section is the notion of Erlang random variables (see e.g. \cite[Chapter 1]{cox}).

\begin{definition}
Let $\lambda>0$ be a real number and $k\geq1$ be an integer. The real random variable $U$ is said to follow the \emph{Erlang distribution} of parameters $(k,\lambda)$ if its density with respect to the Lebesgue measure on $\R$ is given by
\[
f(x,k,\lambda)=\frac{\lambda^k x^{k-1} e^{-\lambda x}}{(k-1)!}\indset{x>0}.
\]
We write this as $U\sim$ Erlang$(k,\lambda)$.
\end{definition}

An Erlang$(k,\lambda)$ variable is distributed like the sum of $k$ i.i.d. Exp($\lambda$) variables.

\begin{remark}
\label{rem:computations}
The rest of this section is very computational. We explain here our method to perform the exact computations of probabilities and expectations. We first express them using independent Erlang variables that should verify certain inequalities. This allows us to write them as multiple integrals of the product of the densities of these Erlang variables on the domain defined by the inequalities.  The computation of the multiple integrals is then straightforward  using that a primitive of $f(x,k,\lambda)$ is given for $x>0$ by:
\[
F(x,k,\lambda)=-\sum_{j=0}^{k-1}\frac{(\lambda x)^{j} e^{-\lambda x}}{j!}.
\]
We computed all these integrals by hand. Each computation is elementary, but may take a page or two when there is a sum of two or three terms in the integrand, as is the case for example in Subsection~\ref{ssec:regionlengths}.
\end{remark}

\subsection{Probabilities of individual bracelets}
\label{ssec:individual}

We first prove that the probability of each bracelet is a rational number. This proof does not use the exponential spacing model.

\begin{proof}[Proof of Proposition~\ref{prop:rational}]
We denote by $0=p_1<p_2<\cdots<p_n<1$ the positions of the $n$ points. For every $1\leq i \leq n$ recall the definitions of $l_i=\tfrac{p_i+p_{i+1}}{2}$ and $l'_i=l_i+\tfrac{1}{2}$, where we take the representative modulo $1$ in $[0,1)$. Let $b\in\calB_n$ be a bracelet. Writing that $b$ is achieved is a logical statement that can be written as a disjunction (using the operator ``or'') of disjoint clauses, where each clause corresponds to an ordering of the $3n$ elements
\[
p_1,\ldots,p_n,l_1,\ldots,l_n,l'_1,\ldots,l'_n.
\]
Such an ordering can itself be expressed as a disjunction of disjoint literals, where each literal is a conjunction (using the operator ``and'') of inequalities of the following form: some linear combination with rational coefficients of the $p_i$'s is greater than some rational number. Hence each literal defines a convex polytope contained inside $[0,1]^n$ and whose faces are hyperplanes defined by Cartesian equations involving only rational coefficients. Such a polytope has vertices with rational coordinates, hence its volume is rational. Finally, the probability of $b$ can be written as the sum of the volumes of such convex polytopes, hence is rational.
\end{proof}

We now turn to the computation of the probability of the bracelet $b_n$, which is the equivalence class of $(1,0,1,\ldots,1,0,\ldots,0)$. The computation is much easier for this bracelet than for other bracelets, since it can only be realized by a small number of OCDCs.

\begin{proof}[Proof of Proposition~\ref{prop:nicebracelet}]
The bracelet $b_n$ has exactly two antipodal regions of type~$2$, one with an element of $\calP$ to the left of an element of $\calP'$, the other with an element of $\calP$ to the right of an element of $\calP'$. We rotate the circle so that the element of $\calP$ which is on the left in a region of type~$2$ lies at $0$. Thus, by Lemmas~\ref{lem:match} and~\ref{lem:green}, any OCDC $(o_0,\ldots,o_{n-1})$ realizing $b_n$ must satisfy $o_0=B_R$ and $o_1=W_L$. There are only four words in the equivalence class of $b_n$ such that the first letter corresponds to a region of type~$2$, these are
\begin{align}
\underline{w}_1&=(1,\ldots,1,0,1,0,\ldots,0) \\
\underline{w}_2&=(1,0,1,\ldots,1,0,\ldots,0) \\
\underline{w}_3&=(1,0,\ldots,0,1,\ldots,1,0) \\
\underline{w}_4&=(1,0,\ldots,0,1,0,1,\ldots,1).
\end{align}
Here the number of missing $0$'s and $1$'s represented by the dots is entirely determined by the fact that each word has $n$ letters of each type. By Remark~\ref{rem:signaturearrows}, the sequence $(d_i)_{1\leq i\leq n}$ of $R$'s and $L$'s associated to each of these words is entirely determined by their signatures. For general realizable words, the only remaining ambiguity to entirely determine the OCDC is to know the color of the dot on the left in each region of type~$2$. Here this indeterminacy is lifted by the condition which we imposed, that we have a black dot at position $0$ which is the left dot in a region of type~$2$. The four OCDCs thus obtained from the four $\underline{w}_i$ can be computed explicitly, a case-by-case analysis shows that they all satisfy $o_0=B_R$, $o_1=W_L$ and $o_2=\cdots=o_{n-1}$. We denote these four OCDCs by $\Omega_{B_L}$, $\Omega_{B_R}$, $\Omega_{W_L}$ and $\Omega_{W_R}$, where the index of $\Omega$ corresponds to the value of $o_2$. They correspond respectively to $\underline{w}_1,\underline{w}_2,\underline{w}_3$ and $\underline{w}_4$.

We compute the probabilities of these OCDCs using the exponential spacing model with spacings $T_1,\ldots,T_n$ and colors $\Gamma_0,\ldots,\Gamma_{n-1}$. There is a probability $1/2^{n-1}$ for the variables $\Gamma_1,\ldots,\Gamma_{n-1}$ to achieve the colors imposed by a given OCDC. We recall that in the exponential spacing model $\Gamma_0$ is always fixed to be black.

We first explain the computation of $\P(\Omega_{B_L})$. The condition $o_1=W_L$ translates to $T_1<T_2$. The condition $o_{n-1}=B_L$ translates to $T_2+\cdots+T_{n-1}<T_n$. The condition $o_0=B_R$ translates to $T_1<T_2+\cdots+T_n$ which is implied by the earlier condition that $T_1<T_2$. Finally each condition $o_i=B_L$ for $2\leq i\leq n-2$ translates to $T_2+\cdots+T_i<T_n$, which is implied by the earlier condition that $T_2+\cdots+T_{n-1}<T_n$. Hence the two conditions $T_1<T_2$ and $T_2+\cdots+T_{n-1}<T_n$ together with the conditions on the colors $\Gamma_1,\ldots,\Gamma_{n-1}$ are equivalent to the realization of $\Omega_{B_L}$. Since the $T_i$'s are independent of the $\Gamma_i$'s, we obtain the following product of probabilities:
\[
\P(\Omega_{B_L})=\frac{1}{2^{n-1}}\P(T_1<T_2 \text{ and } T_2+\cdots+T_{n-1}<T_n).
\]

A similar case-by-case reasoning yields
\begin{align}
\P(\Omega_{B_R})&=\frac{1}{2^{n-1}}\P(T_1<T_n \text{ and } T_3+\cdots+T_n<T_2) \\
\P(\Omega_{W_L})&=\frac{1}{2^{n-1}}\P(T_2+\cdots+T_{n-1}<T_n) \\
\P(\Omega_{W_R})&=\frac{1}{2^{n-1}}\P(T_3+\cdots+T_n<T_2).
\end{align}
Since the $T_i$ are i.i.d. we have that $\P(\Omega_{B_L})=\P(\Omega_{B_R})$ and $\P(\Omega_{W_L})=\P(\Omega_{W_R})$.

Set $U=T_3+\cdots+T_{n-1}$, then $U\sim$ Erlang$(n-3,1)$. Write also $X=T_1$, $Y=T_2$ and $Z=T_n$. By the independence of $X,Y,Z$ and $U$ and using the method outlined in Remark~\ref{rem:computations}, we have
\begin{align}
2^{n-1}\P(\Omega_{B_L})&= \P(X<Y \text{ and } Y+U<Z) \\
&=\int_{u=0}^{\infty}\frac{u^{n-4}}{(n-4)!}e^{-u}\int_{y=0}^{\infty}  e^{-y}\int_{z=y+u}^{\infty} e^{-z}\int_{x=0}^y  e^{-x}\d x \, \d z \, \d y \, \d u \\
&= \frac{1}{3\cdot 2^{n-2}},
\end{align}
thus $\P(\Omega_{B_L})=\frac{1}{3\cdot 2^{2n-3}}$.

Set $V=T_2+\cdots+T_{n-1}$, then $V\sim$ Erlang$(n-2,1)$. Write also $Z=T_n$. Then we have
\begin{align}
2^{n-1}\P(\Omega_{W_L})&= \P(Z>V) \\
&=\int_{z=0}^{\infty} e^{-z} \int_{v=0}^z \frac{v^{n-3}}{(n-3)!}e^{-v}\d v \, \d z \\
&= \frac{1}{2^{n-2}},
\end{align}
thus $\P(\Omega_{W_L})=\frac{1}{2^{2n-3}}$.

Since there are $n$ possible choices for the point to place in $0$, we have
\[
\P(b_n)=n\left(\P(\Omega_{B_L})+\P(\Omega_{B_R})+\P(\Omega_{W_L})+\P(\Omega_{W_R})\right),
\]
which yields the desired quantity.
\end{proof}

\subsection{Expected number of regions of each type}
\label{ssec:greenexpectation}

In this subsection we prove Theorem~\ref{thm:greenexpectation} about the expected number of regions of each type.

\begin{proof}[Proof of Theorem~\ref{thm:greenexpectation}]
As described in the introduction, it suffices to study $H_{2,n}$. Fix a point $p\in\calP$. Denote by $f_n$ the probability that $p$ lies in a region of type~$2$ and lies to the left of the element of $\calP'$ which is also in this region. Then we have $\E[H_{2,n}]=2nf_n$. To compute $f_n$, we use the exponential spacing model with the random variables $T_i$, $Y_i$ and $\Gamma_i$ constructed as above and denote by $\underline{O}=(O_0,\ldots,O_{n-1})$ the random OCDC obtained from these random variables. Denote by $Q$ the event that $O_0=B_R$ and $O_1=W_L$. Then by Lemmas~\ref{lem:match} and~\ref{lem:green} we have $\P(Q)=f_n$.

For any OCDC $\underline{o}=(o_0,o_1,\ldots,o_{n-1})$, we define
\[
\alpha(\underline{o})=\left\{ 1\leq i\leq n-1 | o_i\in \{B_L,B_R \} \right\}.
\]
If $\alpha(\underline{o})\neq\varnothing$ set $\alpha^-(\underline{o})=\min(\alpha(\underline{o}))$ and $\alpha^+(\underline{o})=\max(\alpha(\underline{o}))$.
Define the events
\[
A_\varnothing=\{\alpha(\underline{O})=\varnothing\}
\]
and for every $k,l\geq1$ such that $k+l\leq n-1$,
\[
A_{k,l}=\{O_0=B_R, O_1=W_L,\alpha^-(\underline{O})=k+1 \text{ and }\alpha^+(\underline{O})=n-l\}.
\]
These events are illustrated in Figure~\ref{fig:length2}.

Observing that $\alpha(\underline{O})=\varnothing$ automatically implies that $O_0=B_R$ and $O_1=W_L$, we have the following partition for $Q$:
\begin{equation}
\label{eq:qpartition}
Q=A_\varnothing \sqcup \bigsqcup_{\substack{k,l\geq1, \\
        k+l\leq n-1}} A_{k,l}.
\end{equation}

Clearly $\P(A_\varnothing)=2^{-(n-1)}$. If $k,l\geq1$ such that $k+l\leq n-1$, then
\[
\P(A_{k,l})=\frac{1}{2^{\min(1+k+l,n-1)}} \ p_{k,l},
\]
where
\[
p_{k,l}=\P\left( T_1 < T_2+\dots+T_{k+1} \text{ and } T_1 <
      T_{n+1-l}+\dots+T_{n} \right).
\]

Set $X=T_1$, $U=T_2+\cdots+T_{k+1}$ and $V=T_{n+1-l}+\cdots+T_{n}$. Then $U\sim$ Erlang$(k,1)$ and $V\sim$ Erlang$(l,1)$ and $X,U$ and $V$ are independent, thus

  \begin{equation}
    \begin{split}
      p_{k,l} &= \P(X<U \text{ and } X<V) \\
      &= \int_{x=0}^{\infty} e^{-x} \int_{u=x}^{\infty} \frac{u^{k-1}}{(k-1)!}e^{-u} \int_{v=x}^{\infty} \frac{v^{l-1}}{(l-1)!}e^{-v}\d v \, \d u \, \d x\\
      & =  \sum_{i=0}^{k-1} \sum _{j=0}^{l-1} \binom{i+j}{j} \frac{1}{3^{i+j+1}}.
    \end{split}
  \end{equation}

For every $0<x<\tfrac{1}{2}$ we define
   \begin{equation}
   \label{eq:defphi}
          \varphi(x) := \sum_{\substack{k,l\geq1 \\
        k+l\leq n-1}}\sum _{\substack{0
        \leq i \leq k-1 \\
0\leq j \leq l-1}}\frac{1}{2^{\min(1+k+l,n-1)}}  \binom{i+j}{j}x^{i+j+1}.
  \end{equation}

Then we have
\[
f_n=\frac{1}{2^{n-1}}+\varphi\left(\frac13\right)
\]

Lemma~\ref{lem:phix} (stated and proved below) implies that
\begin{equation}
\label{eq:phi13}
\varphi(\tfrac13)=\tfrac14-\tfrac1{2^{n-1}}+\tfrac1{4\cdot 3^{n-2}}
\end{equation}
thus $f_n=\tfrac14+\tfrac1{4\cdot 3^{n-2}}$ and finally
\[
\E[H_{2,n}]=\frac{n}{2}\left( 1+\frac{1}{3^{n-2}} \right) .
\]
\end{proof}

\begin{lemma}
\label{lem:phix}
For every $0<x<\tfrac{1}{2}$ we have
  \begin{equation}
    \varphi(x)= \frac{x}{2} \frac{1-x^{n-2}}{1-x} -
          \frac{x}{2^{n-1}} \frac{1-(2x)^{n-2}}{1-2x}.
  \end{equation}
\end{lemma}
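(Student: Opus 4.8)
The plan is to evaluate the double sum defining $\phi(x)$ in~\eqref{eq:defphi} by first performing the inner sum over $i,j$ in closed form, then handling the outer sum over $k,l$ by carefully separating the two regimes dictated by the $\min(1+k+l,n-1)$ in the exponent. The key preliminary identity is that, for fixed $k,l\geq1$,
\[
\sum_{\substack{0\leq i\leq k-1\\0\leq j\leq l-1}}\binom{i+j}{j}x^{i+j+1}
\]
is the partial sum of a negative binomial series and can be reduced to something tractable. The cleanest route is to sum over the \emph{total degree} $d=i+j+1$: collecting terms, one finds that the inner sum depends on $k+l$ in a simple way. I would therefore reindex by $s:=k+l$ and $m:=\min(1+s,n-1)$, writing $\phi(x)=\sum_{s=2}^{n-1}2^{-m}\,c_s(x)$, where $c_s(x):=\sum_{k+l=s,\,k,l\geq1}\sum_{i,j}\binom{i+j}{j}x^{i+j+1}$.

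First I would compute $c_s(x)$. Using the standard identity $\sum_{i+j=t}\binom{i+j}{j}=2^{t}$ together with the constraints $0\leq i\leq k-1$, $0\leq j\leq l-1$, and summing over the pairs $(k,l)$ with $k+l=s$, I expect the combinatorial bookkeeping to collapse to $c_s(x)=\sum_{t=0}^{s-2}(\text{something})\,x^{t+1}$ with a clean coefficient; in fact the natural guess, consistent with the target formula, is that $c_s(x)=2^{s}x - (\text{lower order})$, or more precisely that after summation the generating function in $s$ factors as a product of two geometric-type series. The main obstacle will be getting this inner double sum exactly right, since the interaction between the binomial coefficient $\binom{i+j}{j}$ and the rectangular truncation $0\leq i\leq k-1$, $0\leq j\leq l-1$ does not factor na\"ively; I anticipate needing the hockey-stick identity or an Abel-type summation to push the truncated binomial sums into closed form.

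Once $c_s(x)$ is known, the outer sum splits at the threshold $s=n-2$: for $2\leq s\leq n-2$ the exponent is $m=1+s$, and for $s=n-1$ it is $m=n-1$. I would then write
\[
\phi(x)=\sum_{s=2}^{n-2}\frac{c_s(x)}{2^{s+1}}+\frac{c_{n-1}(x)}{2^{n-1}},
\]
and recognize each piece as a finite geometric sum. The factor $2^{-(s+1)}$ paired with a $2^{s}$ growth in $c_s(x)$ produces the first term $\tfrac{x}{2}\cdot\tfrac{1-x^{n-2}}{1-x}$ (a geometric series in $x$ with $n-2$ terms), while the correction terms, carrying an extra factor accounting for the truncation, assemble into the second geometric series $\tfrac{x}{2^{n-1}}\cdot\tfrac{1-(2x)^{n-2}}{1-2x}$ in the variable $2x$.

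The strategy is thus: reduce to a single sum over $s=k+l$, evaluate $c_s(x)$ explicitly, then sum two finite geometric progressions and collect. I expect the verification that the boundary term at $s=n-1$ fits seamlessly with the generic terms to require a little care, since the $\min$ in the exponent changes the prefactor exactly there; this, together with the closed-form evaluation of the truncated binomial sum, is where essentially all the work lies, the remainder being elementary geometric-series algebra. As a sanity check I would confirm the claimed value $\phi(\tfrac13)=\tfrac14-\tfrac1{2^{n-1}}+\tfrac1{4\cdot 3^{n-2}}$ in~\eqref{eq:phi13} by substituting $x=\tfrac13$ (so $2x=\tfrac23$) into the proposed formula.
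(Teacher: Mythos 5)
Your overall skeleton is workable: reindexing by $s=k+l$, splitting the outer sum at the threshold where $\min(1+s,n-1)$ changes regime, and finishing with geometric series does lead to the stated formula (carrying it out does recover the claimed expression, including the cancellation between the generic terms and the boundary slice $s=n-1$). But the proposal stops exactly where the lemma's content lies: you never compute $c_s(x)$, and the guess you offer for it is wrong. For fixed $i+j=t\leq s-2$, the number of pairs $(k,l)$ with $k+l=s$, $k\geq i+1$, $l\geq j+1$ equals $s-t-1$, which depends only on $t$; hence
\begin{equation}
c_s(x)=\sum_{t=0}^{s-2}(s-1-t)\,x^{t+1}\sum_{j=0}^{t}\binom{t}{j}
=\sum_{t=0}^{s-2}(s-1-t)\,2^{t}x^{t+1}.
\end{equation}
Since $2x<1$, this quantity grows \emph{linearly} in $s$ (leading behaviour $(s-1)x/(1-2x)$), not like your guessed $2^{s}x$. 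Moreover, the obstacle you anticipate --- that the rectangular truncation forces a hockey-stick or Abel-type summation --- is a misdiagnosis: the count $s-t-1$ above is the whole trick, and $\sum_{j=0}^{t}\binom{t}{j}=2^{t}$ finishes each slice with no further combinatorial identity. What remains is the standard sum $\sum_{r=1}^{M}r\,2^{-r}=2-(M+2)2^{-M}$ to evaluate $\sum_{s}2^{-(s+1)}c_s(x)$, and the verification that the $s=n-1$ term cancels the resulting defect; none of this appears in your write-up, so as it stands this is a plan with the decisive computation missing and mis-guessed, not a proof.

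For comparison, the paper orders the triple sum the other way: for fixed $(i,j)$ it first sums $2^{-\min(1+k+l,n-1)}$ over the admissible $(k,l)$ (again via $s=k+l$), obtaining the clean closed form $2^{-(i+j+1)}-2^{-(n-1)}$, and only then sums over $(i,j)$, where $\sum_{j=0}^{u}\binom{u}{j}=2^{u}$ collapses everything into the two geometric series of the statement. That ordering is slightly more economical than yours because the $(k,l)$-sum is evaluated once and for all (with the boundary case absorbed at the start) rather than slice by slice, but the two routes are reorganizations of the same elementary computation; the gap is not in your choice of route but in the fact that its central step was left as an incorrect guess.
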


\begin{proof}
  By summing first on $(i,j)$ in formula~\eqref{eq:defphi}, we get
  \begin{equation}
    \varphi(x) =\sum_{\substack{i,j\geq0 \\ i+j \leq n-3}} \binom{i+j}{j}  x^{i+j+1} \left(
      \sum_{\substack{i+1\leq k, \ j+1\leq l, \\
          k+l\leq n-1}} \\\frac{1}{2^{\min(1+k+l,n-1)}} \right).
  \end{equation}
  The inner sum can be computed by changing one variable to
  $s=k+l$. For any $s$ satisfying $i+j+2\leq s \leq n-1$, there are $s-(i+j+1)$ terms
  $(k,l)$ contributing, hence the inner sum is (treating the case
  $s=n-1$ apart):
  \begin{equation}
    \frac{n-1-(i+j+1)}{2^{n-1}} + \sum_{s=i+j+2}^{n-2}\frac{s-(i+j+1)}{2^{1+s}} .
  \end{equation}
  This computation is standard, and the result is $\tfrac{1}{2^{i+j+1}}
  - \tfrac{1}{2^{n-1}}$. Thus
  \begin{equation}
 \varphi(x) =\sum_{\substack{i,j\geq0 \\ i+j \leq n-3}} \binom{i+j}{j}  \left(\frac{x}{2}\right)^{i+j+1}-\frac{1}{2^{n-1}}\sum_{\substack{i,j\geq0 \\ i+j \leq n-3}} \binom{i+j}{j} x^{i+j+1}.
  \end{equation}
  Summing instead on the variables $u=i+j$ and $j$, this becomes
\[
      \varphi(x) =  \sum_{u=0}^{n-3} \left(\frac x2 \right)^{u+1}
      \sum_{j=0}^u \binom{u}{j} \ - \
      \frac{1}{2^{n-1}}  \sum_{u=0}^{n-3} x^{u+1} \sum_{j=0}^u
      \binom{u}{j},
\]
from which the desired formula quickly follows.

\end{proof}

\subsection{Expected total length of regions of type~$k$}
\label{ssec:regionlengths}

Since $\E[L_{0,n}]+\E[L_{1,n}]+\E[L_{2,n}]=1$, to prove Theorem~\ref{thm:greenredlengths} it suffices to compute $\E[L_{2,n}]$ and $\E[L_{1,n}]$, which is done respectively in Subsections~\ref{sssec:type2} and~\ref{sssec:type1}.

\subsubsection{Expected total length of regions of type~$2$}
\label{sssec:type2}

\begin{proposition}
For every $n\geq3$, we have
\[
\E[L_{2,n}]=\frac{3^n+2n+11}{8\cdot 3^{n-1}}.
\]
\end{proposition}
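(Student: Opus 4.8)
The plan is to reduce, via the transfer lemma (Lemma~\ref{lem:fundamental}), to a computation in the exponential spacing model, following closely the strategy used for $\E[H_{2,n}]$ in the proof of Theorem~\ref{thm:greenexpectation}. First I would argue, by exchangeability of the $n$ points and the reflection symmetry exchanging the two colours, together with the scale-decoupling of Lemma~\ref{lemma:exp_var}, that
\[
\E[L_{2,n}] = \E\!\left[ \ell_0 \, \mathbf{1}_{Q} \right],
\]
where $Q=\{O_0=B_R,\,O_1=W_L\}$ is the event considered in the proof of Theorem~\ref{thm:greenexpectation} and $\ell_0$ is the (unnormalised) length of the type-$2$ region anchored at the reference black dot $Y_0=0$. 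This is the length analogue of the identity $\E[H_{2,n}]=2nf_n$: the factors $2n$ and $2Y_n$ cancel exactly as in Lemma~\ref{lem:fundamental}.

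The key step is then to find a usable expression for $\ell_0$. By Lemma~\ref{lem:green} the region containing $Y_0=0$ and its mutually nearest white neighbour $Y_1$ is the arc delimited by the closest bisector on each side; writing $b_\pm$ (resp. $w_\pm$) for the previous/next black (resp. white) dots, the ``looks-left/looks-right'' inequalities force the region to be
\[
\left[\ \max\!\Bigl(\tfrac{b_-}{2},\tfrac{w_-+Y_1}{2}\Bigr),\ \min\!\Bigl(\tfrac{b_+}{2},\tfrac{Y_1+w_+}{2}\Bigr)\ \right].
\]
On the event $A_{k,l}$ (where the first black among positions $1,\dots,n-1$ sits at $k+1$ and the last at $n-l$, as in the proof of Theorem~\ref{thm:greenexpectation}) I would check that both extrema simplify: setting $A=T_3+\cdots+T_{k+1}$ and $B=T_{n-l+1}+\cdots+T_{n-1}$, the ``inner'' bisector always wins on the side where the pair looks, and one obtains
\[
\ell_0=\tfrac12\Bigl(2T_1+T_2+T_n-(T_1-A)_+-(T_1-B)_+\Bigr),
\]
with $(x)_+=\max(0,x)$; this formula remains valid on $A_\varnothing$ with the convention $A=B=+\infty$. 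The point of this reduction is that $\ell_0$ depends only on $T_1,T_2,T_n$ and on the Erlang variables $A\sim\mathrm{Erlang}(k-1,1)$ and $B\sim\mathrm{Erlang}(l-1,1)$, and not on the (uncontrolled) colours in the middle of the configuration.

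It then remains to integrate. Using the partition~\eqref{eq:qpartition} of $Q$, on each $A_{k,l}$ the variables $T_1,T_2,T_n,A,B$ are independent, the $RL$-pattern constraints read $T_1<T_2+A$ and $T_1<B+T_n$, and the colours contribute the weight $2^{-\min(1+k+l,\,n-1)}$, exactly as in the formula for $p_{k,l}$. I would compute $\E[\ell_0\mathbf{1}_{A_{k,l}}]$ as a product of (at most quadruple) integrals of exponential and Erlang densities, evaluated with the primitive $F(x,k,\lambda)$ recalled above; the right/left symmetry $(T_2,A,k)\leftrightarrow(T_n,B,l)$ halves the work. Summing over $k,l\geq1$ with $k+l\leq n-1$, plus the boundary contribution of $A_\varnothing$, produces binomial--geometric double series in $\tfrac13$ and $\tfrac12$ of exactly the type handled in Lemma~\ref{lem:phix}, which collapse to closed form and, after simplification, give $\tfrac{3^n+2n+11}{8\cdot 3^{n-1}}$. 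As a check, the leading term yields $\E[L_{2,n}]\to\tfrac38$, in agreement with Corollary~\ref{cor:asymptotics}.

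The main obstacle I anticipate is not conceptual but organisational: proving the collapse of the $\min$ and $\max$ into the clean single formula for $\ell_0$ (which requires carefully treating the boundary regimes $k=1$, $l=1$ and the colour-flipped extension $Y_{i\pm n}$), and then keeping exact track of the colour weights $2^{-\min(1+k+l,n-1)}$ and of the coupling $k+l\leq n-1$ through the double summation. These are precisely the places where the subleading terms $2n+11$ are generated, so they must be handled without approximation.
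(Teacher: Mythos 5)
Your proposal is correct and follows essentially the same route as the paper's proof: transfer to the exponential spacing model, reduction to $\E[\ell_0\,\mathbf{1}_Q]$ with the anchored type-$2$ region, the partition of $Q$ into $A_\varnothing$ and the events $A_{k,l}$, then Erlang integrals and binomial sums closed out via Lemma~\ref{lem:phix}. The only difference is cosmetic: the paper exploits the left--right symmetry of the anchored region to replace its length by $2\rho-T_1$ (so only the right boundary must be tracked), whereas you track both boundaries through your formula $\ell_0=\tfrac12\bigl(2T_1+T_2+T_n-(T_1-A)_+-(T_1-B)_+\bigr)$, which is indeed valid (including the cases $k=1$, $l=1$ and $A_\varnothing$) and leads to equivalent integrals.
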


\begin{proof}
We use the exponential spacing model again. Regions of type~$2$ come
in antipodal pairs of equal lengths and exactly one of the
regions in each pair has the element of $\calP$ to the left of the
element of $\calP'$. Recall from the proof of
Theorem~\ref{thm:greenexpectation} that $Q$ denotes the event that
$O_0=B_R$ and $O_1=W_L$. Conditionally on $Q$ being realized, the
region of type~$2$ containing the origin can be divided into three:
the portion from the left boundary to the black dot, the portion from
the black dot to the white dot and the portion from the white dot to
the right boundary. By symmetry, conditionally on $Q$, the expected
lengths of the first and third portions are equal. The position of the
white dot is $T_1$ and we denote by $\rho$ the position of the right
boundary. See Figure~\ref{fig:length2}.

\begin{figure}[h]
  \centering
  \def\svgwidth{11cm}

  \begingroup%
  \makeatletter%
  \providecommand\color[2][]{%
    \errmessage{(Inkscape) Color is used for the text in Inkscape, but the package 'color.sty' is not loaded}%
    \renewcommand\color[2][]{}%
  }%
  \providecommand\transparent[1]{%
    \errmessage{(Inkscape) Transparency is used (non-zero) for the text in Inkscape, but the package 'transparent.sty' is not loaded}%
    \renewcommand\transparent[1]{}%
  }%
  \providecommand\rotatebox[2]{#2}%
  \newcommand*\fsize{\dimexpr\f@size pt\relax}%
  \newcommand*\lineheight[1]{\fontsize{\fsize}{#1\fsize}\selectfont}%
  \ifx\svgwidth\undefined%
    \setlength{\unitlength}{183.85544424bp}%
    \ifx\svgscale\undefined%
      \relax%
    \else%
      \setlength{\unitlength}{\unitlength * \real{\svgscale}}%
    \fi%
  \else%
    \setlength{\unitlength}{\svgwidth}%
  \fi%
  \global\let\svgwidth\undefined%
  \global\let\svgscale\undefined%
  \makeatother%
  \begin{picture}(1,0.42771129)%
    \lineheight{1}%
    \setlength\tabcolsep{0pt}%
    \put(0.42892454,0.26915143){\color[rgb]{0,0,0}\makebox(0,0)[t]{\lineheight{1.25}\smash{\begin{tabular}[t]{c}$0$\end{tabular}}}}%
    \put(0.55141915,0.26885394){\color[rgb]{0,0,0}\makebox(0,0)[t]{\lineheight{1.25}\smash{\begin{tabular}[t]{c}$T_1$\end{tabular}}}}%
    \put(0.61227282,0.30948951){\color[rgb]{0,0,0}\makebox(0,0)[t]{\lineheight{1.25}\smash{\begin{tabular}[t]{c}$\rho$\end{tabular}}}}%
    \put(0.67379791,0.26885394){\color[rgb]{0,0,0}\makebox(0,0)[t]{\lineheight{1.25}\smash{\begin{tabular}[t]{c}$T_1+T_2$\end{tabular}}}}%
    \put(0.26586864,0.26885394){\color[rgb]{0,0,0}\makebox(0,0)[t]{\lineheight{1.25}\smash{\begin{tabular}[t]{c}$-T_n$\end{tabular}}}}%
    \put(0.12318337,0.37936757){\color[rgb]{0,0,0}\makebox(0,0)[t]{\lineheight{1.25}\smash{\begin{tabular}[t]{c}$\dots$\end{tabular}}}}%
    \put(0.83031482,0.37861345){\color[rgb]{0,0,0}\makebox(0,0)[t]{\lineheight{1.25}\smash{\begin{tabular}[t]{c}$\dots$\end{tabular}}}}%
    \put(0.42892454,0.00399738){\color[rgb]{0,0,0}\makebox(0,0)[t]{\lineheight{1.25}\smash{\begin{tabular}[t]{c}$0$\end{tabular}}}}%
    \put(0.55141917,0.0036998){\color[rgb]{0,0,0}\makebox(0,0)[t]{\lineheight{1.25}\smash{\begin{tabular}[t]{c}$T_1$\end{tabular}}}}%
    \put(0.60411423,0.04433512){\color[rgb]{0,0,0}\makebox(0,0)[t]{\lineheight{1.25}\smash{\begin{tabular}[t]{c}$\rho$\end{tabular}}}}%
    \put(0.66563932,0.0036998){\color[rgb]{0,0,0}\makebox(0,0)[t]{\lineheight{1.25}\smash{\begin{tabular}[t]{c}$T_1+T_2$\end{tabular}}}}%
    \put(0.30635829,0.00449216){\color[rgb]{0,0,0}\makebox(0,0)[t]{\lineheight{1.25}\smash{\begin{tabular}[t]{c}$-T_n$\end{tabular}}}}%
    \put(0.25715767,0.11348489){\color[rgb]{0,0,0}\makebox(0,0)[t]{\lineheight{1.25}\smash{\begin{tabular}[t]{c}$\dots$\end{tabular}}}}%
    \put(0.71811345,0.11369418){\color[rgb]{0,0,0}\makebox(0,0)[t]{\lineheight{1.25}\smash{\begin{tabular}[t]{c}$\dots$\end{tabular}}}}%
    \put(0.90595172,0.00363066){\color[rgb]{0,0,0}\makebox(0,0)[t]{\lineheight{1.25}\smash{\begin{tabular}[t]{c}$T_1+\dots+T_{k+1}$\end{tabular}}}}%
    \put(0.10233119,0.00363064){\color[rgb]{0,0,0}\makebox(0,0)[t]{\lineheight{1.25}\smash{\begin{tabular}[t]{c}$-(T_n+\dots+T_{n+1-l})$\end{tabular}}}}%
    \put(0,0.02){\includegraphics[width=\unitlength,page=1]{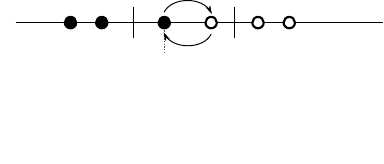}}%
    \put(0,0.02){\includegraphics[width=\unitlength,page=2]{length2.pdf}}%
    \put(0,0.02){\includegraphics[width=\unitlength,page=3]{length2.pdf}}%
    \put(0,0.02){\includegraphics[width=\unitlength,page=4]{length2.pdf}}%
    \put(0,0.02){\includegraphics[width=\unitlength,page=5]{length2.pdf}}%
  \end{picture}%
\endgroup%

  \caption{Arrangement and positions of the relevant points under $A_{\varnothing}$
    (top) and $A_{k,l}$ (bottom).}
  \label{fig:length2}
\end{figure}

By Equation~\eqref{eq:transfer}, we have
\[
\E[L_{2,n}]= \frac{\E[L_2(\underline{T},\underline{\Gamma})]}{2n},
\]
where the expectation on the left-hand side refers to a model on a circle of unit length while the expectation on the right-hand side refers to the exponential spacing model. On the other hand, since in the exponential spacing model there are $n$ choices for the point to place in $0$, we have
\[
\E[L_2(\underline{T},\underline{\Gamma})]=2n\E[(2\rho-T_1)\ind{Q}],
\]
thus
\[
\E[L_{2,n}]=\E[(2\rho-T_1)\ind{Q}].
\]

We use again the partition of $Q$ given by Equation~\eqref{eq:qpartition}. Conditionally on $A_\varnothing$, we have
\[
\rho=\frac{2T_1+T_2}{2}
\]
so
\[
\E[(2\rho-T_1)\ind{A_\varnothing}]=\frac{1}{2^{n-1}}\E[T_1+T_2]=\frac{1}{2^{n-2}}.
\]

Let $k,l\geq1$ be such that $k+l\leq n-1$ and assume for now that $k\geq2$. Conditionally on $A_{k,l}$, we have
\[
2\rho=\min(T_1+\cdots+T_{k+1},2T_1+T_2).
\]
Setting $X=T_1$, $Y=T_2$, $U=T_3+\cdots+T_{k+1}$ and $V=T_{n+1-l}+\cdots+T_{n}$, we have
\[
\E[(2\rho-T_1)\ind{A_{k,l}}]=\frac{1}{2^{\min(1+k+l,n-1)}} \times
\E[\min(Y+U,X+Y)\indset{X<Y+U \text{ and } X<V}].
\]
Since $U\sim$ Erlang$(k-1,1)$, $V\sim$ Erlang$(l,1)$ and $X$, $Y$, $U$ and $V$ are independent, we deduce
\[
2^{\min(1+k+l,n-1)}\E[(2\rho-T_1)\ind{A_{k,l}}]=I_1+I_2,
\]
where
\[
I_1:=\int_{x=0}^{\infty} e^{-x} \int_{u=x}^{\infty} \frac{u^{k-2}}{(k-2)!}e^{-u} \int_{y=0}^{\infty}  (x+y)e^{-y} \int_{v=x}^{\infty} \frac{v^{l-1}}{(l-1)!}e^{-v}\d v \, \d y \, \d u \, \d x
\]
corresponds to the case $X<U$ and
\[
I_2:=\int_{x=0}^{\infty}  e^{-x} \int_{u=0}^{x} \frac{u^{k-2}}{(k-2)!}e^{-u} \int_{y=x-u}^{\infty}  (u+y) e^{-y}\int_{v=x}^{\infty} \frac{v^{l-1}}{(l-1)!}e^{-v}\d v \, \d y \, \d u \, \d x
\]
corresponds to the case $U<X$. A straightforward computation yields
\[
I_1=\sum_{i=0}^{k-2}\sum_{j=0}^{l-1} \binom{i+j}{j} \frac{1}{3^{i+j+1}}+\frac{1}{9}\binom{i+j}{j} \frac{i+j+1}{3^{i+j}}
\]
and
\[
I_2=\sum_{j=0}^{l-1} \binom{j+k-1}{j} \frac{1}{3^{j+k}}+\frac{1}{9}\binom{j+k-1}{j} \frac{j+k}{3^{j+k-1}}
\]
so that
\begin{multline}
\label{eq:k2}
2^{\min(1+k+l,n-1)}\E[(2\rho-T_1)\ind{A_{k,l}}]= \\
\sum_{i=0}^{k-1}\sum_{j=0}^{l-1} \binom{i+j}{j} \frac{1}{3^{i+j+1}}+\frac{1}{9}\binom{i+j}{j} \frac{i+j+1}{3^{i+j}}.
\end{multline}
If $k=1$, then conditionally on $A_{1,l}$ we have $2\rho=T_1+T_2$ so that
\[
\E[(2\rho-T_1)\ind{A_{1,l}}]=\frac{1}{2^{\min(2+l,n-1)}}
\E[T_2\indset{T_1<T_2 \text{ and } T_1<T_{n+1-l}+\cdots+T_n}].
\]
Similar computations as above show that formula~\eqref{eq:k2} is also valid for $k=1$.

Recalling the definition of $\varphi(x)$ from Equation~\eqref{eq:defphi}, we observe that
\[
\E[(2\rho-T_1)\ind{Q}]=\frac{1}{2^{n-2}}+\varphi\left(\frac13\right)+\frac19\varphi'\left(\frac13\right).
\]
Applying formula~\eqref{eq:phi13} to express $\varphi(\tfrac13)$ and using Lemma~\ref{lem:phix} to compute
\[
\varphi'\left(\frac13\right)=\frac98-\frac{9}{2^{n-1}}+\frac{2n+5}{8\cdot 3^{n-3}},
\]
we conclude that
\[
\E[L_{2,n}]=\frac{3^n+2n+11}{8\cdot 3^{n-1}}.
\]
\end{proof}

\subsubsection{Expected total length of regions of type~$1$}
\label{sssec:type1}

\begin{proposition}
For every $n\geq3$, we have
\[
  \E\left[ L_{1,n} \right]=\frac{3^{n-1}-n-1}{2\cdot 3^{n-1}}.
\]
\end{proposition}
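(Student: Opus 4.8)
The plan is to mirror the computation of $\E[L_{2,n}]$: pass to the exponential spacing model through the transfer lemma (Lemma~\ref{lem:fundamental}), reduce the expected total length of type-1 arcs to the expected length of the single arc containing a fixed dot, and evaluate the resulting Erlang integrals in terms of the function $\phi$ from~\eqref{eq:defphi} and its derivative. By~\eqref{eq:transfer} it is enough to compute $\E[L_1(\underline{T},\underline{\Gamma})]$. Since a type-1 arc contains exactly one dot, $L_1(\underline{T},\underline{\Gamma})$ is the sum, over all $2n$ dots, of the length of the arc containing that dot times the indicator that this arc has type $1$; by translation and color symmetry every dot contributes equally, so $\E[L_1(\underline{T},\underline{\Gamma})]=2n\,\E[\ell(Y_0)\indset{Y_0 \text{ lies in a type-1 arc}}]$, where $\ell(Y_0)$ denotes the length of the arc containing the origin dot. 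Hence $\E[L_{1,n}]=\E[\ell(Y_0)\indset{Y_0 \text{ lies in a type-1 arc}}]$.

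It remains to identify when $Y_0$ lies in a type-1 arc and what the arc looks like. By Lemmas~\ref{lem:match},~\ref{lem:green} and~\ref{lem:red}, the origin dot is alone in its arc exactly when it is not part of a consecutive $RL$ pattern in the sequence of directions $(D(q_i))$; concretely this means either $D(Y_0)=R$ together with the next dot also looking right, or $D(Y_0)=L$ together with the previous dot also looking left. Reflection symmetry makes these two cases contribute equally, so I fix $D(Y_0)=R$ and multiply by $2$. In this case the arc containing $Y_0$ is bounded on the right by the $\calL$-bisector $b_+/2$, the midpoint of $Y_0$ and the next black dot $b_+$, and on the left by $\max\!\left(b_-/2,\tfrac{W_L+W_R}{2}\right)$, the closer of the $\calL$-bisector with the previous black dot $b_-$ and the $\calL'$-bisector $\tfrac{W_L+W_R}{2}$ straddling the two whites $W_L<Y_0<W_R$ nearest the origin. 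Thus $\ell(Y_0)$ splits into a clean right portion equal to $b_+/2$ and a left portion equal to $-\max\!\left(b_-/2,\tfrac{W_L+W_R}{2}\right)$.

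I would then expand $\E[\ell(Y_0)\indset{D(Y_0)=R,\ Y_0 \text{ type-1}}]$ by partitioning the conditioning event into an empty event and a family $A_{k,l}$ of subevents recording the number of intervening dots before the relevant black and white neighbours appear, exactly as in the partition~\eqref{eq:qpartition} used for $L_{2,n}$. Each subevent yields an integral against independent Erlang variables built from the partial sums of the $T_i$, and summing over $k$ and $l$ should collapse, as in the type-2 proof, to an expression of the form $c_0+c_1\,\phi(\tfrac13)+c_2\,\phi'(\tfrac13)$ for explicit constants; substituting the value~\eqref{eq:phi13} of $\phi(\tfrac13)$ and the value of $\phi'(\tfrac13)$ read off from Lemma~\ref{lem:phix} and simplifying gives the claimed formula. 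The main obstacle, and the reason the type-1 case is more delicate than the type-2 one, is the left boundary: whereas a type-2 arc is cleanly bounded by one $\calL$- and one $\calL'$-bisector, a type-1 arc has a boundary equal to a maximum of an $\calL$- and an $\calL'$-bisector, so evaluating its expected length requires an extra case split according to which of $b_-$ and the straddling white-pair midpoint is closer to the origin, and hence a finer family of conditioning events than the $A_{k,l}$ that sufficed for $L_{2,n}$.
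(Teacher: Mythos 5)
Your setup is correct and is, up to a reflection, exactly the paper's: the paper fixes the origin dot to be black and looking \emph{left} via the event $S=\{O_0=B_L,\ O_{n-1}\neq B_R\}$ and proves $\E[L_{1,n}]=\E[(2\rho-2\lambda)\ind{S}]$, which is your identity $\E[L_{1,n}]=\E\left[\ell(Y_0)\indset{Y_0 \text{ lies in a type-1 arc}}\right]$ after the same symmetry halving; your characterization of the type-1 event (two consecutive dots looking the same way) and your boundary description (one clean $\calL$-bisector on the side the dot looks towards, and a $\max$ of an $\calL$- and an $\calL'$-bisector on the other side) are both right, and match the paper's formulas $2\rho=\min(X,X+U-Y)$ appearing in its events $S^3_{k,l}$ and $S^4_{k,l}$.

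The gap is that everything after this point is asserted rather than carried out, and --- since the transfer lemma, the OCDC lemmas and $\phi$ are already established machinery --- that remaining part is the entire content of the proposition. You never define the ``finer family of conditioning events,'' and the claim that the sum ``should collapse, as in the type-2 proof, to an expression of the form $c_0+c_1\phi(\tfrac13)+c_2\phi'(\tfrac13)$'' is not a checkable step; it also misrepresents what actually happens. In the paper, partitioning $S$ requires \emph{five} families of events $S^1_l,\,S^2_l,\,S^3_{k,l},\,S^4_{k,l},\,S^5_{l'}$, which must track the extremal looking-dots of \emph{both} colors ($\alpha^\pm$ and $\beta^\pm$), with separate treatments of the boundary cases $k=0$, $l'=1$ and $l'=n-2$; the $\min$/$\max$ in the arc length splits each contribution into two integrals ($I_1+I_2$, resp. $I_3+I_4$); and the resulting sums $A_1,\dots,A_5$ contain terms such as $\sum\binom{l+h-1}{h}\tfrac{3+i-h}{2^{l+i+2}}$ and $\sum\binom{i+j}{j}\tfrac{3l+i+j+4}{3^{i+j+2}}$ --- binomials weighted by powers of $2$, and sums with linear prefactors --- which are \emph{not} evaluations of the function $\phi$ of \eqref{eq:defphi} and require their own summation identities beyond Lemma~\ref{lem:phix}. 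It is true, a posteriori, that the answer can be written as $\tfrac12-\tfrac{1}{2^{n-1}}+\phi(\tfrac13)-\tfrac29\phi'(\tfrac13)$ using \eqref{eq:phi13}, but the coefficients $1$ and $-\tfrac29$ (contrast with $1$ and $+\tfrac19$ in the type-2 case) can only be produced by doing the computation; nothing in your argument yields them. So what you have is a correct plan whose decisive computational core --- the part that actually produces $\frac{3^{n-1}-n-1}{2\cdot 3^{n-1}}$ --- is missing.
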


\begin{proof}
Denote by $S$ the event that $O_0=B_L$ and $O_{n-1}\neq B_R$. By Lemma~\ref{lem:green}, this is equivalent to requiring that the dot at position $0$ is a black dot looking to the left and lying in a region of type~$1$. Denote by $\lambda$ (resp.~$\rho$) the position of the left (resp.~right) boundary of that region. Since regions of type~$1$ come in antipodal pairs of equal length with exactly one which is occupied, and the point is equally likely to look to the left or to the right, Lemma~\ref{lem:fundamental} and a reasoning similar to the one for $L_{2,n}$ yields
\[
\E[L_{1,n}]=\E[(2\rho-2\lambda)\ind{S}],
\]
where the expectation on the left-hand side refers to a model on a circle of unit length while the expectation on the right-hand side refers to the exponential spacing model.

Recall the definitions of $\alpha^+$ and $\alpha^-$ from the proof of Theorem~\ref{thm:greenexpectation} and define $\beta^+$ and $\beta^-$ similarly for white dots. More precisely, for any OCDC $\underline{o}=(o_0,o_1,\ldots,o_{n-1})$, we define
\[
\beta(\underline{o})=\left\{ 1\leq i\leq n-1 | o_i\in \{W_L,W_R \} \right\}.
\]
If $\beta(\underline{o})\neq\varnothing$, set $\beta^-(\underline{o})=\min(\beta(\underline{o}))$ and $\beta^+(\underline{o})=\max(\beta(\underline{o}))$.
Define the following events.
For every $1\leq l \leq n-2$,
\[
S^1_l:=
\{O_0=B_L, O_{n-1}=B_L, \beta^-(\underline{O})=1, \beta^+(\underline{O})=n-1-l\}.
\]
For every $1\leq l\leq n-3$,
\[
S^2_l:=
\{O_0=B_L,\alpha^+(\underline{O})=n-1-l, \beta^-(\underline{O})=1, \beta^+(\underline{O})=n-1\}.
\]
For every $k,l\geq1$ such that $k+l\leq n-2$,
\[
S^3_{k,l}:=
\{O_0=B_L, O_{n-1}=B_L, \alpha^-(\underline{O})=1, \beta^-(\underline{O})=k+1, \beta^+(\underline{O})=n-1-l\}.
\]
For every $k,l\geq1$ such that $k+l\leq n-3$,
\[
S^4_{k,l}:=
\{O_0=B_L, \alpha^-(\underline{O})=1, \alpha^+(\underline{O})=n-1-l, \beta^-(\underline{O})=k+1, \beta^+(\underline{O})=n-1\}.
\]
For every $1\leq l'\leq n-2$,
\[
S^5_{l'}:=
\{O_0=B_L, \alpha(\underline{O})=\{1,\ldots,n-1-l'\}, \beta(\underline{O})=\{n-l',\ldots,n-1\}\}.
\]
Observing that $O_0=B_L$ implies that $\alpha(\underline{O})\neq\varnothing$ and  that $O_{n-1}=B_L$ implies that $\beta(\underline{O})\neq\varnothing$, we deduce that all the events defined above form a partition of $S$.

We first treat the cases of $S^m$ with $1\leq m \leq 4$. We write $X=T_1$, $U=T_2+\cdots+T_{k+1}$, $V=T_{n-l}+\cdots+T_{n-1}$ and $Y=T_n$. Then $U\sim$ Erlang$(k,1)$, $V\sim$ Erlang$(l,1)$ and $X$, $Y$, $U$ and $V$ are independent.

The event $S^1_l$ corresponds to imposing the colors of $\min(l+2,n-1)$ dots and requiring that $V<Y<X$. Conditionally on $S^1_l$ we have $2\lambda=-V-Y$ and $2\rho=X-Y$. Hence
\[
2^{\min(l+2,n-1)}\E[(2\rho-2\lambda)\ind{S^1_l}]=\int_{v=0}^\infty   \frac{v^{l-1}}{(l-1)!}e^{-v}\int_{y=v}^{\infty}  e^{-y}\int_{x=y}^{\infty}  (x+v)e^{-x} \d x \, \d y \, \d v .
\]
Thus
\[
2^{\min(l+2,n-1)}\E[(2\rho-2\lambda)\ind{S^1_l}]=\frac{9+4l}{4\cdot 3^{l+1}}.
\]

The event $S^2_l$ corresponds to imposing the colors of $\min(l+2,n-1)$ dots and requiring that $V+Y<X$. Conditionally on $S^2_l$ we have $2\lambda=-Y$ and $2\rho=X-Y-V$. Hence
\[
2^{\min(l+2,n-1)}\E[(2\rho-2\lambda)\ind{S^1_l}]=\int_{v=0}^\infty   \frac{v^{l-1}}{(l-1)!}e^{-v} \int_{y=0}^{\infty}  e^{-y} \int_{x=v+y}^{\infty}  (x-v)e^{-x} \d x \, \d y \, \d v.
\]
Thus
\[
2^{\min(l+2,n-1)}\E[(2\rho-2\lambda)\ind{S^2_l}]=\frac{3}{2^{l+2}}.
\]

The event $S^3_{k,l}$ corresponds to imposing the colors of $\min(k+l+2,n-1)$ dots and requiring that $V<Y<X+U$. Conditionally on $S^3_l$ we have $2\lambda=-V-Y$ and $2\rho=\min(X,X+U-Y)$. Hence
\[
2^{\min(k+l+2,n-1)}\E[(2\rho-2\lambda)\ind{S^3_{k,l}}]=I_1+I_2
\]
where
\[
I_1=\int_{x=0}^\infty  e^{-x}  \int_{y=0}^{\infty}  e^{-y}\int_{u=y}^{\infty}  \frac{u^{k-1}}{(k-1)!}e^{-u} \int_{v=0}^y   \frac{v^{l-1}}{(l-1)!}(x+v+y)e^{-v}\d v \, \d u \, \d y \, \d x
\]
corresponds to the case $Y<U$ and
\[
I_2=\int_{y=0}^\infty  e^{-y}  \int_{u=0}^{y}   \frac{u^{k-1}}{(k-1)!}e^{-u} \int_{v=0}^{y}   \frac{v^{l-1}}{(l-1)!}e^{-v}\int_{x=y-u}^\infty   (x+v+u)e^{-x}\d x \, \d v \, \d u \, \d y
\]
corresponds to the case $Y>U$.
Computations yield
\[
I_1=l+2-\frac{k+2l+4}{2^{k+1}}-\sum_{i=0}^{k-1}\binom{i+l}{l}\frac{l}{3^{i+l+1}}-\sum_{i=0}^{k-1}\sum_{j=0}^{l-1}\binom{i+j}{j}\frac{3l+i+j+4}{3^{i+j+2}}
\]
and
\[
I_2=\frac{k+2l+3}{2^{k+2}}-\binom{k+l}{l}\frac{l}{3^{k+l+1}}-\sum_{j=0}^{l-1}\binom{k+j}{j}\frac{3l+k+j+4}{3^{k+j+2}}
\]

\begin{multline}
2^{\min(k+l+2,n-1)}\E[(2\rho-2\lambda)\ind{S^3_{k,l}}]=\\
l+2-\frac{k+2l+5}{2^{k+2}} -\sum_{i=0}^{k}\binom{i+l}{l}\frac{l}{3^{i+l+1}}-\sum_{i=0}^{k}\sum_{j=0}^{l-1}\binom{i+j}{j}\frac{3l+i+j+4}{3^{i+j+2}}.
\end{multline}

The event $S^4_{k,l}$ corresponds to imposing the colors of $\min(k+l+2,n-1)$ dots and requiring that $V+Y<X+U$. Conditionally on $S^4_l$ we have $2\lambda=-Y$ and $2\rho=\min(X,X+U-Y-V)$. Hence
\[
2^{\min(k+l+2,n-1)}\E[(2\rho-2\lambda)\ind{S^4_{k,l}}]=I_3+I_4
\]
where
\[
I_3=\int_{x=0}^\infty  e^{-x}  \int_{y=0}^{\infty}  e^{-y} \int_{v=0}^{\infty}   \frac{v^{l-1}}{(l-1)!}e^{-v} \int_{u=y+v}^\infty   \frac{u^{k-1}}{(k-1)!}(x+y)e^{-u}\d u \, \d v \, \d y \, \d x
\]
corresponds to the case $Y+V<U$ and
\[
I_4=\int_{y=0}^\infty  e^{-y} \int_{v=0}^{\infty}   \frac{v^{l-1}}{(l-1)!}e^{-v} \int_{u=0}^{y+v}  \frac{u^{k-1}}{(k-1)!}e^{-u} \int_{x=y+v-u}^\infty   (x+u-v)e^{-x}\d x \, \d u \, \d v \, \d y 
\]
corresponds to the case $Y+V>U$.
Computations yield
\[
I_3=\sum_{i=0}^{k-1}\sum_{h=0}^i\binom{l+h-1}{h}\frac{3+i-h}{2^{l+i+2}}
\]
and
\[
I_4=\sum_{h=0}^k\binom{l+h-1}{h}\frac{3+k-h}{2^{l+k+2}}
\]
thus
\[
2^{\min(k+l+2,n-1)}\E[(2\rho-2\lambda)\ind{S^4_{k,l}}]=
\sum_{i=0}^{k}\sum_{h=0}^i\binom{l+h-1}{h}\frac{3+i-h}{2^{l+i+2}}.
\]
The cases $S^1_l$ (resp.~$S^2_l$) correspond to $S^3_{0,l}$ (resp.~$S^4_{0,l}$) provided we extend the definitions of $S^3_{k,l}$ and $S^4_{k,l}$ to $k=0$.

For the case of $S^5_{l'}$, we define $X=T_1$, $U=T_2+\cdots+T_{n-1-l'}$, $Z=T_{n-l'}$, $V=T_{n-l'+1}+\cdots+T_{n-1}$ and $Y=T_n$. Then $U\sim$ Erlang$(n-2-l',1)$, $V\sim$ Erlang$(l'-1,1)$ and $X$, $Y$, $U$, $V$ and $Z$ are independent. The event $S^5_{l'}$ corresponds to requiring that $V+Y<X+U$ and imposing the colors of $n-1$ dots. Conditionally on $S^5_{l'}$ we have $2\lambda=-Y$ and $2\rho=\min(X,X+U-Y-V)$. Setting $k=n-2-l'$ and $l=l'-1$ and comparing with the computation for $S^4_{k,l}$ we deduce that if $2\leq l'\leq n-3$:
\begin{equation}
\label{eq:s5}
2^{n-1}\E[(2\rho-2\lambda)\ind{S^5_{l'}}]=
\sum_{i=0}^{n-2-l'}\sum_{h=0}^i\binom{l'+h-2}{h}\frac{3+i-h}{2^{l'+i+1}}.
\end{equation}
A separate computation shows that Equation~\eqref{eq:s5} still holds when $l'=n-2$. So the total contributions of $S^5_{l'}$ for $2\leq l'\leq n-2$ correspond to the contributions of $S^4_{k,l}$ with $k+l=n-3$ and $0\leq k\leq n-4$. For $l'=1$, another separate computation gives
\[
2^{n-1}\E[(2\rho-2\lambda)\ind{S^5_1}]=\frac{2^n-n-2}{2^{n-1}}.
\]

For every $s\geq1$, we denote by $G_s$ the set of all pairs of integers $(k,l)$ such that $k\geq0$, $l\geq1$ and $k+l\leq s$. Then $\E[L_{1,n}]=A_1-A_2-A_3+A_4+A_5$ with
\begin{align}
  A_1 &= \sum_{(k,l)\in
  G_{n-2}}\frac{1}{2^{\min(k+l+2,n-1)}}\left(l+2-\frac{k+2l+5}{2^{k+2}}\right) \\
  A_2 &= \sum_{(k,l)\in
  G_{n-2}}\frac{1}{2^{\min(k+l+2,n-1)}} \sum_{i=0}^k
       \binom{i+l}{l}\frac{l}{3^{i+l+1}} \\
  A_3 &= \sum_{(k,l)\in  G_{n-2}}\frac{1}{2^{\min(k+l+2,n-1)}} \sum_{i=0}^k
  \sum_{j=0}^{l-1} \binom{i+j}{j}\frac{3l+i+j+4}{3^{i+j+2}} \\
  A_4 &= \sum_{(k,l)\in G_{n-3}}\frac{1}{2^{k+l+2}}\sum_{i=0}^k \sum_{h=0}^i \binom{l+h-1}{h}\frac{3+i-h}{2^{l+i+2}} \\
A_5 &= \frac{2^n-n-2}{2^{2n-2}}+\frac{1}{2^{n-1}} \sum_{k=0}^{n-4} \sum_{i=0}^k \sum_{h=0}^i \binom{n-k+h-4}{h}\frac{3+i-h}{2^{n-k+i-1}}.
\end{align}

Computing these sums can be done via lengthy but elementary manipulations
of indices and summation formulas, analogous to those of Lemma~\ref{lem:phix}. Here we simply indicate the auxiliary summation variables needed and the order in which to compute the sums. For $A_4$ it is useful to introduce
$u=l+i, v=l+h, s=k+l$ and compute the sums on $s,l,v,u$ in this order (meaning that the sum on $s$ is the innermost sum); $A_5$
can be deduced easily. For $A_1,A_2,A_3$ it is more convenient to split
the terms for which $k+l=n-2$ from the others. Then $A_1$ is
straightforward; $A_2$ can be calculated by introducing $s=k+l, u=i+l$ and computing the
sums on $l,u,s$ in this order; for $A_3$, one can introduce
$s=k+l,u=i+j$ and compute the sums on $l,s,j,u$ in this order. In this
way, at each step the sums to compute are either of the form
$\sum_{m=a}^b m^{d}x^m$, or of the form
$\sum_{m=0}^n \binom{n}{m} m^{d}$, with $d \in
\{0,1,2\}$. These being standard, we omit the computations here.
Thus we get
\begin{align}
  A_1 &= \frac19 \left( 11 - 9\frac{n+1}{2^{n-1}} + \frac{3n+4}{2^{2n-2}} \right) \\
  A_2 &= \frac{1}{16} \left( 1 - \frac{2n-1}{3^{n-1}} \right) \\
  A_3 &= \frac{1}{16} \left( 15 - \frac{n+1}{2^{n-5}} + \frac{10n+7}{3^{n-1}} \right) \\
  A_4 &= \frac{5}{18} - \frac{2n-5}{2^n} - \frac{13+3n}{9\cdot 2^{2n-2}} \\
  A_5 &= \frac{2n-5}{2^n} + \frac{1}{2^{2n-2}}
\end{align}
and combining these yields
\begin{equation*}
  \E\left[ L_{1,n} \right]=\frac{3^{n-1}-n-1}{2\cdot 3^{n-1}}.
\end{equation*}
\end{proof}

\subsection{Equidistribution of regions of each type}
\label{ssec:equidistributed}

This subsection is devoted to the proof of Theorem~\ref{thm:functionalcv}, which is done by making use of Theorems~\ref{thm:greenexpectation} and~\ref{thm:greenredlengths}.

\begin{proof}[Proof of Theorem~\ref{thm:functionalcv}]

Recall that we identify the circle with the interval $[0,1)$, so that, for $0\leq t \leq 1$, the circular arc from $1$ to $e^{2i\pi t}$ gets identified to the interval $[0,t]$. We can restrict ourselves to $0\leq t\leq \tfrac12$, since for $\tfrac12 <t \leq 1$, we have for every $k\in\{0,1,2\}$
\begin{align*}
h_{k,n}(t)&=h_{k,n}\left(t-\frac12\right)+h_{k,n}\left(\frac12\right) \\
\ell_{k,n}(t)&=\ell_{k,n}\left(t-\frac12\right)+\ell_{k,n}\left(\frac12\right).
\end{align*}

Let us first focus on the proof of Theorem~\ref{thm:functionalcv} (i), as (ii) can be showed in the same way. Firstly, remark that the variables $( \frac{h_{0,n}(t)}{2n}, \frac{h_{1,n}(t)}{2n}, \frac{h_{2,n}(t)}{2n} )_{0 \leq t \leq \tfrac12}$ live in the compact space $[0,1]^{[0,\tfrac12]}$, so the sequence that we consider is tight. Thus, we only have to check the convergence of its finite-dimensional marginals. Fix $m \geq 1$ and $0<a_1<a_2<\cdots<a_m<\tfrac12$ to be $m$ real numbers. We also set $a_0=0$ and $a_{m+1}=\tfrac12$. We first prove that the proportions of regions of each color in $[a_0,a_1]\ldots,[a_m,a_{m+1}]$ are ``almost independent'', and then use Theorem~\ref{thm:greenexpectation} on each of these intervals to get the result. For this, let $N_1, \ldots, N_{m+1}$ be the number of dots in each of these intervals. Recall that black (resp.~white) dots correspond to elements of $\calP$ (resp.~$\calP'$). It is clear that $(N_1, \ldots, N_{m+1})$ is distributed as a multinomial of parameters $(n; 2(a_1-a_0), 2(a_2-a_1),\ldots, 2(a_{m+1}-a_m))$. Thus, for any $1 \leq i \leq m+1$, a Chernoff bound provides
\begin{align*}
\P \left( \left\lvert N_i-2n(a_i-a_{i-1}) \right\rvert\geq n^{3/4} \right) \leq 2 e^{-2\sqrt{n}}.
\end{align*}
This implies that, with probability going to $1$ as $n \rightarrow \infty$:
\begin{equation}
\label{eq:bounds}
\forall 1 \leq i \leq m+1, \left\lvert N_i-2n(a_i-a_{i-1})\right\rvert\leq n^{3/4}.
\end{equation}
Let us assume from now on that this holds. We now need to control the interactions between two different intervals of the form $[a_i,a_{i+1}]$. The key remark is the following: among all the lines arising as the boundary of some region entirely contained inside $[a_i,a_{i+1}]$, at most $4$ are not midpoints of two dots that are both inside $[a_i,a_{i+1}]$. These four lines involve the leftmost and rightmost dots of each color in the interval. Hence, overall, at most $4(m+1)$ regions are created by interactions between two intervals. Consider now the affine map sending $a_i$ to $0$ and $a_{i+1}$ to $\tfrac12$. Conditionally to the number of points in the interval $[a_i,a_{i+1}]$, their images by this map are i.i.d. uniform on $[0,\tfrac12]$. By~\eqref{eq:bounds}, using Theorem~\ref{thm:greenexpectation} and the previous key remark, jointly for all $i$:

\begin{align*}
&\left( \frac{h_{0,n}(a_{i+1}) - h_{0,n}(a_i)}{2n}, \frac{h_{1,n}(a_{i+1}) - h_{1,n}(a_i)}{2n}, \frac{h_{2,n}(a_{i+1}) - h_{2,n}(a_i)}{2n} \right)\\ &\underset{n \rightarrow \infty}{\overset{(d)}{\longrightarrow}} \left( \frac{a_{i+1}-a_i}{4}, \frac{a_{i+1}-a_i}{2}, \frac{a_{i+1}-a_i}{4} \right).
\end{align*}
Thus, the finite-dimensional marginals of the process converge, and one gets (i).

Let us now check that the same method may be applied to prove (ii). The sequence involved is tight for the same reason. To prove the convergence of the finite-dimensional marginals of the process, we again use the fact that only a bounded number of regions arise from the interaction between two different intervals.
The only additional ingredient that we need is the following result, whose proof can be found in \cite{Lev39}, which tells us that all the regions are small enough, so that omitting a bounded number of regions does not change much the sum of the lengths of the regions of a given type.

\begin{lemma}[\cite{Lev39}]
\label{lem:levy}
Let $M_n$ be the maximal distance between two consecutive points for $n$ i.i.d. uniform points on $[0,\tfrac12]$. Then, as $n \rightarrow \infty$, in probability:
\begin{align*}
\frac{n M_n}{\log n} \rightarrow \frac{1}{2}.
\end{align*}
\end{lemma}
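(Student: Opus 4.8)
The plan is to derive this classical statement (hence the citation to \cite{Lev39}) from the Rényi--Pyke representation of uniform spacings, which is already available to us through Lemma~\ref{lemma:exp_var}; once that representation is invoked, the result reduces to the elementary large-$n$ behaviour of the maximum of i.i.d.\ exponential variables. First I would rescale the problem: if the points are uniform on $[0,\tfrac12]$, multiplying by $2$ yields $n$ uniform points on $[0,1]$ whose consecutive spacings are exactly twice those on $[0,\tfrac12]$. Writing $M_n^{[0,1]}$ for the maximal spacing on $[0,1]$, we have $M_n=\tfrac12 M_n^{[0,1]}$, so it suffices to prove $\tfrac{n M_n^{[0,1]}}{\log n}\to 1$ in probability, the factor $\tfrac12$ then producing the stated limit. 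By Lemma~\ref{lemma:exp_var}, the interior spacings of $n$ uniform points on $[0,1]$ are distributed as $(T_1/Y,\ldots,T_{n-1}/Y)$, where $T_1,\ldots,T_{n-1}$ are i.i.d.\ exponential of parameter $1$ and $Y=\sum_i T_i$ (adding the two boundary spacings is immaterial, since one would then take the maximum over $n+1$ rather than $n-1$ exponentials, and $\log(n-1)\sim\log(n+1)\sim\log n$). Taking the coordinate-wise maximum gives the key identity
\[
M_n^{[0,1]}\;\egaldistr\;\frac{\max_{1\le i\le n-1}T_i}{Y}.
\]

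Next I would treat the two factors separately. For the denominator, the law of large numbers gives $Y/n\to 1$ almost surely, hence $n/Y\to 1$ in probability. For the numerator I would use the closed form $\P\left(\max_{1\le i\le n-1}T_i\le x\right)=(1-e^{-x})^{n-1}$: substituting $x=(1\pm\epsilon)\log n$ and using $\bigl(1-n^{-(1\pm\epsilon)}\bigr)^{n-1}\to 1$ (respectively $0$) shows that $\tfrac{\max_i T_i}{\log n}\to 1$ in probability. Combining the factors,
\[
\frac{n M_n^{[0,1]}}{\log n}\;\egaldistr\;\frac{\max_i T_i}{\log n}\cdot\frac{n}{Y}\;\longrightarrow\;1
\]
in probability by Slutsky's lemma, and the rescaling contributes the factor $\tfrac12$, giving $\tfrac{n M_n}{\log n}\to\tfrac12$.

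There is no genuine obstacle, the statement being classical; the only point requiring mild care is that we need convergence \emph{in probability} rather than merely in distribution, i.e.\ we must control both the upper and the lower tail of $\max_i T_i$ (the upper bound guaranteeing $M_n$ is not too large, the lower bound guaranteeing it is not too small). Both estimates, however, follow immediately from the explicit CDF above, so once Lemma~\ref{lemma:exp_var} is in hand the argument is short. I would therefore regard the main conceptual content as the reduction to exponential spacings — precisely the tool the paper has already set up — rather than any delicate estimate.
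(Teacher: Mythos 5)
Your argument is correct in substance, but note that the paper itself does not prove Lemma~\ref{lem:levy}: it is quoted from the literature, with the proof deferred to \cite{Lev39}. So your proof is not a variant of the paper's argument but a self-contained replacement for the citation, and it is a natural one given the tools already in place: Pyke's representation (Lemma~\ref{lemma:exp_var}) reduces the maximal spacing to a ratio $\max_i T_i/\sum_i T_i$, the explicit CDF $(1-e^{-x})^m$ gives $\max_i T_i/\log n \to 1$ in probability, the law of large numbers handles the denominator, Slutsky combines them, and the rescaling from $[0,1]$ to $[0,\tfrac12]$ produces the factor $\tfrac12$. One detail should be fixed: your distributional identity for the interior spacings is slightly off. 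The $n-1$ interior spacings of $n$ uniform points on $[0,1]$ sum to the distance between the leftmost and rightmost points, which is strictly less than $1$, so they cannot be distributed as $\left(T_1/Y,\ldots,T_{n-1}/Y\right)$ with $Y=\sum_{i=1}^{n-1}T_i$, since that vector sums to exactly $1$. The correct statement is that all $n+1$ spacings, boundary gaps included, are jointly distributed as $\left(T_1/Y_{n+1},\ldots,T_{n+1}/Y_{n+1}\right)$ with $Y_{n+1}=\sum_{i=1}^{n+1}T_i$; equivalently, in the paper's circular setup where one point is pinned at $0$, the $n$ spacings are proportional to $\left(T_i/Y_n\right)_{1\leq i\leq n}$. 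Since $\max_{1\leq i\leq m}T_i/\log m\to 1$ and $Y_m/m\to 1$ in probability for $m\in\{n-1,n,n+1\}$ alike, this correction changes nothing in the limit, and the rest of your argument goes through verbatim.
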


Considering the first dot to the left and the first dot to the right of a region of type~$k$ with $k\in\{0,1,2\}$, we find that an upper bound for the length of that region is $(k+1)M_n$. By Lemma~\ref{lem:levy}, with probability going to $1$ as $n \rightarrow \infty$, every region length is less than $\sqrt{n}$, and thus one can use the same key argument as in the proof of (i). By Theorem~\ref{thm:greenredlengths}, we get jointly for all $i$:

\begin{align*}
&\left(  \ell_{0,n}(a_{i+1})- \ell_{0,n}(a_{i}), \ell_{1,n}(a_{i+1})- \ell_{1,n}(a_{i}), \ell_{2,n}(a_{i+1})-\ell_{2,n}(a_{i}) \right) \\ &\underset{n \rightarrow \infty}{\overset{(d)}{\longrightarrow}} \left( \frac{a_{i+1}-a_i}{8}, \frac{a_{i+1}-a_i}{2}, \frac{3(a_{i+1}-a_i)}{8} \right).
\end{align*}
Hence, the finite-dimensional marginals of the process converge, and one gets (ii).
\end{proof}

\section{Uniform realizable words and bracelets}
\label{sec:uniformwords}

The aim of this section is to prove Theorem~\ref{thm:refinedconvergence} stated below, which immediately implies Theorem~\ref{thm:convergence} about the asymptotic shape of a uniformly random realizable word.

Let $\underline{w}^{(n)}$ be a random word taken uniformly in the set of
realizable words of length $2n$. We define the \emph{folded word} obtained from $\underline{w}^{(n)}$ to be the word
$\hat{\underline{w}}^{(n)}$ of length $n$ on the alphabet
$\{00,10,01,11\}$,
whose letter in position $i$ is the concatenation $w^{(n)}_i w^{(n)}_{i+n}$. For
$x \in [0,n]$ and $a \in \{ 00,11,10,01 \}$, denote by $S^{a}_{x}$
the number of letters $a$ in $\hat{\underline{w}}^{(n)}$ between positions
$0$ and $\lfloor x \rfloor$. Then the following holds:
\begin{theorem}
\label{thm:refinedconvergence}
\begin{itemize}
\item[(i)] The following holds in probability:
\begin{align*}
\left( \frac{S^{00}_{n}}{n}, \frac{S^{11}_{n}}{n}, \frac{S^{10}_{n}}{n}, \frac{S^{01}_{n}}{n} \right) \underset{n \rightarrow \infty}{\longrightarrow} \left(\frac{1}{6}, \frac{1}{6}, \frac{1}{3}, \frac{1}{3} \right).
\end{align*}

\item[(ii)]
We have the functional convergence:
\begin{align*}
&\frac{2}{\sqrt{n}}\left( S^{00}_{cn}-\frac{cn}{6}, S^{11}_{cn}-\frac{cn}{6}, S^{10}_{cn}-\frac{cn}{3}, S^{01}_{cn}-\frac{cn}{3}\right)_{0 \leq c \leq 1} \\
&\underset{n \rightarrow \infty}{\overset{(d)}{\longrightarrow}} \left( W^{(2)}_c, W^{(2)}_c, W^{(1)}_c-W^{(2)}_c, -W^{(1)}_c-W^{(2)}_c \right)_{0 \leq c \leq 1}
\end{align*}
where $W^{(1)}, W^{(2)}$ are two independent Brownian motions of respective variances $2/3$ and $2/9$.
\end{itemize}
\end{theorem}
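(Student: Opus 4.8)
The plan is to reduce the sampling of a uniform realizable word to a transparent explicit procedure, and then to isolate the two independent sources of randomness that produce $W^{(1)}$ and $W^{(2)}$. First I would reinterpret the folded word through the signature: by Theorem~\ref{thm:main} a word is realizable if and only if its signature is interlacing, and a position $i$ carries the folded letter $00$, $11$, or one of $\{10,01\}$ exactly when $\sigma_i=0$, $2$, or $1$. The interlacing condition says precisely that, read cyclically, the subword formed by the \emph{special} letters $00$ and $11$ strictly alternates, with at least one of each. Following the enumeration in the proof of Corollary~\ref{cor:enum}, a uniform realizable word is then produced in three independent steps: choose the number $2p$ of special positions with probability proportional to $\binom{n}{2p}2^{n-2p}$; choose the special positions uniformly among the $\binom{n}{2p}$ subsets together with a fair global phase coin fixing all special letters by alternation; and decide, for each of the $n-2p$ remaining \emph{neutral} positions, by an independent fair coin $\epsilon_i=\pm1$ whether the letter is $10$ or $01$. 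The crucial feature of this description is that the special letters and the neutral signs are governed by independent randomness.

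Writing $K=2p$ for the number of special positions, one checks that $\binom{n}{K}2^{n-K}$ is proportional to $\P(\mathrm{Bin}(n,\tfrac13)=K)$, so $K$ is a binomial of parameters $(n,\tfrac13)$ conditioned to be even; the parity constraint and the requirement $p\ge1$ are asymptotically negligible, and $K$ satisfies a central limit theorem with mean $n/3$ and variance $2n/9$. This already yields $p/n\to 1/6$ and the law of large numbers (i), since $S^{00}_n=S^{11}_n=K/2$ while the $n-K\approx 2n/3$ neutral choices split $10$ and $01$ evenly. For the functional statement the key object is the special-counting process $c\mapsto S^{00}_{cn}+S^{11}_{cn}$. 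Conditionally on $K$ this is the counting process of a uniform random $K$-subset, whose centred count in $[0,cn]$ has conditional variance asymptotic to $\tfrac{2n}{9}c(1-c)$, so after scaling by $1/\sqrt n$ it converges to a Brownian bridge of variance rate $2/9$; adding the independent endpoint fluctuation $c(K-n/3)$, whose variance rate is also $2/9$, the bridge and the endpoint recombine into a genuine Brownian motion $W^{(2)}$ of variance $2/9$. Since $|S^{00}_{cn}-S^{11}_{cn}|\le 1$ for all $c$ by alternation, both $\tfrac{2}{\sqrt n}(S^{00}_{cn}-cn/6)$ and $\tfrac{2}{\sqrt n}(S^{11}_{cn}-cn/6)$ converge to this same $W^{(2)}_c$.

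The second source of randomness is the neutral imbalance $S^{10}_{cn}-S^{01}_{cn}=\sum_{\text{neutral }i\le cn}\epsilon_i$. As the $\epsilon_i$ are i.i.d. fair signs independent of the placement of the special positions, and the number of neutral positions among the first $cn$ concentrates at $2cn/3$, a conditional Donsker theorem shows that $\tfrac{1}{\sqrt n}\sum_{\text{neutral }i\le cn}\epsilon_i$ converges to a Brownian motion $W^{(1)}$ of variance $2/3$, and the independence of the $\epsilon_i$ from everything else makes $W^{(1)}$ independent of $W^{(2)}$. It then remains to assemble the four coordinates through the deterministic identities $S^{10}_{cn}+S^{01}_{cn}=cn-(S^{00}_{cn}+S^{11}_{cn})$ and $S^{10}_{cn}-S^{01}_{cn}=\sum\epsilon_i$: after centring one obtains $\tfrac{2}{\sqrt n}(S^{10}_{cn}-cn/3)\to W^{(1)}_c-W^{(2)}_c$ and $\tfrac{2}{\sqrt n}(S^{01}_{cn}-cn/3)\to -W^{(1)}_c-W^{(2)}_c$, which is exactly the joint limit in Theorem~\ref{thm:refinedconvergence}(ii). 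Theorem~\ref{thm:convergence} follows immediately by reading off $F^0=S^{00}$, $F^2=S^{11}$ and $F^1=S^{10}+S^{01}$.

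The hard part will be the functional central limit theorem for the special-counting process. One must show that the negatively correlated fluctuations of a uniform random subset, which on their own would produce a Brownian bridge, combine with the genuinely random total $K$ to yield a Brownian motion with independent increments; this works precisely because the variance rate of the subset-counting bridge and the variance of $K$ coincide, both equal to $2/9$, so that the cross terms telescope to the covariance $\tfrac{2n}{9}\min(c_1,c_2)$ of Brownian motion. Establishing joint convergence in $\D([0,1],\R^4)$ — tightness, together with the degeneracy forcing the limits of $S^{00}$ and $S^{11}$ to coincide and the asymptotic independence of the subset bridge both from $K$ and from the neutral signs — is the delicate point; by contrast, the law of large numbers (i) and the identification of all limiting variances reduce to routine moment computations.
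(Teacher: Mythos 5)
Your proposal is correct, but it takes a genuinely different route from the paper's. The paper encodes a folded realizable word as a lattice path: it bijects $\hat{\cW}_n^+$ with walks of length $n$ with steps in $\{0,+1,-1\}$ having an even nonzero number of zero steps, realizes the uniform measure as an i.i.d.\ walk $(S_i,K_i)$ \emph{conditioned} on the parity of $K_n$, proves an unconditioned Donsker theorem via a two-dimensional local limit theorem (Theorem~\ref{thm:multivariatelltart1}), transfers it to the conditioned walk through the absolute-continuity factorization~\eqref{eq:expect1} on $[0,u]$ together with a time-reversal argument for tightness near $1$, and finally translates back through Lemma~\ref{lem:walktoword}. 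You instead absorb the conditioning entirely into the one-dimensional law of $K=2p$: by the count $2\binom{n}{2p}2^{n-2p}$ from the proof of Corollary~\ref{cor:enum}, sampling a uniform realizable word is exactly drawing $K\sim\Bin(n,\tfrac13)$ conditioned to be even and nonzero, then a uniform $K$-subset for the special positions, a phase coin, and i.i.d.\ signs on the neutral positions, so no conditioned \emph{functional} limit theorem is ever needed. Your remaining ingredients --- the CLT for the parity-conditioned binomial, the Brownian-bridge limit for the counting process of a uniform random subset (sampling without replacement, variance rate $\tfrac13\cdot\tfrac23=\tfrac29$), the fact that a bridge of rate $2/9$ plus $c$ times an independent $N(0,2/9)$ endpoint recombines into a Brownian motion of rate $2/9$, and a random-time-change Donsker theorem for the signs --- are all standard, and the resulting limits match the paper's identifications (your $S^{00}_{cn}+S^{11}_{cn}$ and $S^{10}_{cn}-S^{01}_{cn}$ play the roles of the paper's $K_{cn}$ and $S_{cn}$). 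What each approach buys: the paper disposes of the conditioning once and for all with a ready-made local CLT, while your decomposition is more probabilistically transparent --- it exhibits the two independent sources of randomness explicitly, makes clear why $S^{00}$ and $S^{11}$ share the same limit, and replaces the 2D local CLT and the time-reversal tightness step by elementary one-dimensional facts, at the price of the joint-convergence/asymptotic-independence argument for the subset bridge that you correctly flag as the delicate point. One wording caveat: the parity constraint on $K$ is not ``negligible'' (it halves the support); what you actually need is that the even-conditioned binomial obeys the same CLT, which follows for instance from $\P(\Bin(n,\tfrac13)\text{ even})-\P(\Bin(n,\tfrac13)\text{ odd})=3^{-n}$, the same identity the paper uses to show $\varphi_n(0)\to\tfrac12$.
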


To simplify notations, we will often drop the
dependence in $n$. We prove both parts of the theorem at once. The main idea in the proof
is to rephrase it in terms of a random walk, and then use a
\emph{local limit theorem}. A local limit theorem controls the precise
value of a random walk after a large number of steps. Let us state it
properly (see e.g. \cite[Theorem~$6.1$]{Rva61} for a proof of this
result). Recall that a random variable $Y \in \Z^j$ is called \emph{aperiodic} if there is no strict sublattice of $\Z^j$ containing the set of differences $\{x-y, x,y \in \Z^j, \P(Y=x)>0, \P(Y=y)>0 \}$.

\begin{theorem}[\cite{Rva61}]
\label{thm:multivariatelltart1}
Let $j \geq 1$ and $(\textbf{Y}_i)_{i \geq 1} := \left((Y_i^{(1)}, \ldots, Y_i^{(j)})\right)_{i \geq 1}$ be i.i.d. random variables in $\Z^j$ with finite variance, such that the covariance matrix $\Sigma$ of  $\mathbf{Y}_1$ is positive definite. Assume in addition that $\textbf{Y}_1$ is aperiodic and denote by $\textbf{M}$ the mean of $\textbf{Y}_1$. Finally, define for $n \geq 1$
\begin{align*}
\mathbf{T_n} = \frac{1}{\sqrt{n}} \left(\sum_{i=1}^n \mathbf{Y}_i - n \mathbf{M} \right) \in \R^j.
\end{align*}
Then, as $n \rightarrow \infty$, uniformly for $\mathbf{x} \in \R^j$ such that $\P \left( \mathbf{T_n} = \mathbf{x} \right) > 0$,
\begin{align*}
\P \left( \mathbf{T_n} = \mathbf{x} \right) = \frac{1}{(2\pi n)^{j/2} \sqrt{\det \Sigma}} e^{-\frac{1}{2} ^t \bf{x} \Sigma^{-1} \bf{x}} + o\left( n^{-j/2} \right).
\end{align*}
\end{theorem}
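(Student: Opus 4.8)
The plan is to prove this classical lattice local limit theorem by Fourier inversion on the torus, following the standard Stone--Rvačeva scheme. Write $\mathbf{S}_n=\sum_{i=1}^n\mathbf{Y}_i\in\Z^j$ and let $\varphi(\mathbf{t})=\E[e^{i\langle\mathbf{t},\mathbf{Y}_1\rangle}]$ be the characteristic function of $\mathbf{Y}_1$, which is $2\pi$-periodic in each coordinate. Since $\P(\mathbf{T}_n=\mathbf{x})>0$ forces $\mathbf{k}:=n\mathbf{M}+\sqrt{n}\,\mathbf{x}$ to lie in $\Z^j$, the inversion formula on $[-\pi,\pi]^j$ gives
\[
\P(\mathbf{T}_n=\mathbf{x})=\P(\mathbf{S}_n=\mathbf{k})=\frac{1}{(2\pi)^j}\int_{[-\pi,\pi]^j}e^{-i\langle\mathbf{t},\mathbf{k}\rangle}\varphi(\mathbf{t})^n\,d\mathbf{t}.
\]
Introducing the centered characteristic function $\psi(\mathbf{t})=e^{-i\langle\mathbf{t},\mathbf{M}\rangle}\varphi(\mathbf{t})$ (the characteristic function of $\mathbf{Y}_1-\mathbf{M}$), substituting $\mathbf{t}=\mathbf{s}/\sqrt{n}$ and using $\langle\mathbf{s}/\sqrt{n},\mathbf{k}\rangle=\sqrt{n}\,\langle\mathbf{s},\mathbf{M}\rangle+\langle\mathbf{s},\mathbf{x}\rangle$, the factors carrying the mean $\mathbf{M}$ cancel and one is left with
\[
n^{j/2}\,\P(\mathbf{T}_n=\mathbf{x})=\frac{1}{(2\pi)^j}\int_{[-\pi\sqrt{n},\pi\sqrt{n}]^j}e^{-i\langle\mathbf{s},\mathbf{x}\rangle}\,\psi\!\left(\tfrac{\mathbf{s}}{\sqrt{n}}\right)^{n}d\mathbf{s}.
\]
It thus suffices to show that this integral converges, uniformly in $\mathbf{x}$, to the Gaussian Fourier integral $\int_{\R^j}e^{-i\langle\mathbf{s},\mathbf{x}\rangle}e^{-\frac12\langle\mathbf{s},\Sigma\mathbf{s}\rangle}\,d\mathbf{s}$.

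To control the integrand I would split the rescaled box at a small radius. Since $\mathbf{Y}_1-\mathbf{M}$ has mean zero and covariance $\Sigma$, finite variance yields the second-order expansion $\psi(\mathbf{t})=1-\tfrac12\langle\mathbf{t},\Sigma\mathbf{t}\rangle+o(\norm{\mathbf{t}}^2)$ as $\mathbf{t}\to0$. In particular there exist $\delta_0\in(0,\pi)$ and $c>0$ with $|\psi(\mathbf{t})|\le e^{-c\norm{\mathbf{t}}^2}$ for $\norm{\mathbf{t}}\le\delta_0$; hence on the inner region $A=\{\norm{\mathbf{s}}\le\delta_0\sqrt{n}\}$ one has the $n$-uniform domination $|\psi(\mathbf{s}/\sqrt{n})^n|\le e^{-c\norm{\mathbf{s}}^2}$, while the same expansion gives the pointwise limit $\psi(\mathbf{s}/\sqrt{n})^n\to e^{-\frac12\langle\mathbf{s},\Sigma\mathbf{s}\rangle}$ for each fixed $\mathbf{s}$. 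Dominated convergence then shows $\int_A\to\int_{\R^j}$. On the outer region $B=[-\pi\sqrt{n},\pi\sqrt{n}]^j\setminus A$, aperiodicity enters: because the differences of $\mathbf{Y}_1$ generate $\Z^j$, one has $|\varphi(\mathbf{t})|=|\psi(\mathbf{t})|<1$ for every $\mathbf{t}\in[-\pi,\pi]^j\setminus\{0\}$, so by continuity and compactness $\theta:=\sup\{|\psi(\mathbf{t})|:\mathbf{t}\in[-\pi,\pi]^j,\ \norm{\mathbf{t}}\ge\delta_0\}<1$. Consequently the contribution of $B$ is bounded in modulus by $\theta^n(2\pi\sqrt{n})^j$, which tends to $0$.

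Assembling the two pieces gives $n^{j/2}\P(\mathbf{T}_n=\mathbf{x})\to\frac{1}{(2\pi)^j}\int_{\R^j}e^{-i\langle\mathbf{s},\mathbf{x}\rangle}e^{-\frac12\langle\mathbf{s},\Sigma\mathbf{s}\rangle}\,d\mathbf{s}$, and completing the square evaluates the Gaussian integral as $(2\pi)^{j/2}(\det\Sigma)^{-1/2}e^{-\frac12\langle\mathbf{x},\Sigma^{-1}\mathbf{x}\rangle}$; dividing by $(2\pi)^j$ reproduces the claimed main term, so that the displayed identity says precisely that the remainder is $o(n^{-j/2})$. The key point that makes this uniform in $\mathbf{x}$ — and hence delivers the error uniformly over the admissible shifted lattice $\{\mathbf{x}:n\mathbf{M}+\sqrt{n}\,\mathbf{x}\in\Z^j\}$ — is that $\mathbf{x}$ enters only through the oscillatory factor $e^{-i\langle\mathbf{s},\mathbf{x}\rangle}$ of modulus one: every error estimate above (the dominated-convergence bound on $A$, the Gaussian tail outside $A$, and the exponential bound $\theta^n$ on $B$) is obtained by passing to absolute values and is therefore independent of $\mathbf{x}$. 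The main obstacle is the careful handling of the small-frequency region: one must extract the domination $|\psi(\mathbf{t})|\le e^{-c\norm{\mathbf{t}}^2}$ from finite variance alone (rather than a third-moment assumption), and verify that the $o(\norm{\mathbf{t}}^2)$ error in the expansion, once amplified by the exponent $n$ on the rescaled region, still vanishes, so that the pointwise convergence and the $n$-uniform domination hold simultaneously; the aperiodicity-based strict bound $|\psi|<1$ away from the origin is exactly what rules out spurious contributions from the rest of the torus.
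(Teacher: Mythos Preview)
The paper does not prove this theorem: it is quoted from the literature (Rva\v{c}eva~\cite{Rva61}), with the explicit remark ``see e.g.\ \cite[Theorem~6.1]{Rva61} for a proof of this result''. There is therefore no in-paper argument to compare against.

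That said, your proposal is the standard and correct proof. The Fourier inversion on $[-\pi,\pi]^j$, the rescaling $\mathbf{t}=\mathbf{s}/\sqrt{n}$, the split into a small-frequency region (handled by the second-order expansion of $\psi$ and dominated convergence) and a large-frequency region (handled by aperiodicity via $\sup_{\norm{\mathbf t}\ge\delta_0}|\psi(\mathbf t)|<1$) is exactly the Stone--Rva\v{c}eva scheme. Your remark that uniformity in $\mathbf{x}$ comes for free because $\mathbf{x}$ only enters through the unimodular factor $e^{-i\langle\mathbf{s},\mathbf{x}\rangle}$ is the right point. The one place where a referee might ask for a line more of detail is the passage from the expansion $\psi(\mathbf t)=1-\tfrac12\langle\mathbf t,\Sigma\mathbf t\rangle+o(\norm{\mathbf t}^2)$ to the pointwise limit $\psi(\mathbf s/\sqrt n)^n\to e^{-\frac12\langle\mathbf s,\Sigma\mathbf s\rangle}$ together with the uniform Gaussian domination on the inner region; both do follow from finite second moments alone (no third moment is needed), but it is worth spelling out that the $o(\cdot)$ is uniform in direction and that for $\norm{\mathbf t}\le\delta_0$ one may take logarithms to get $\log|\psi(\mathbf t)|\le -c\norm{\mathbf t}^2$.
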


We now define a walk from a realizable word, in an almost bijective
way: given any $a := xy \in \{ 00, 01, 10, 11 \}$, define $f(a) =
x-y$. Then, to a folded realizable word $\hat{\underline{w}} := \hat{w}_1 \cdots \hat{w}_n$, we associate the walk $S$ satisfying $S_0=0$ and, for all $i \geq 1$, $S_i-S_{i-1}= f(\hat{w}_i)$.

Remark that occurrences of $01$ (resp.~$10$) in $\hat{\underline{w}}$
correspond to jumps by $-1$ (resp.~$+1$) in $S$. Jumps by $0$ in $S$ may
correspond to either $00$ or $11$, but as these two letters shall
alternate in a folded realizable word, if one knows whether the first
jump $0$ corresponds to $00$ or $11$, then it is possible to recover
$\hat{\underline{w}}$ from $S$. By symmetry, we assume from now on
that the first jump by $0$ corresponds to $11$, so that the map
$\hat{\underline{w}} \mapsto S$ is a bijection from $\hat{\cW}_n^+$ to
$\Walks^+(n)$, where $\hat{\cW}_n^+$ is the set of folded realizable words whose first $11$ appears before the first $00$, and $\Walks^+(n)$ is the set of walks of length $n$, starting from $0$ and with steps in $\{0,+1,-1\}$, with an even nonzero number of steps $0$.

Now take $n$ to be a positive integer. We want to study a uniform element of the set $\Walks^+(n)$. To this end, we first study the set $\Walks(n)$ of walks of length $n$, starting from $0$ and with jumps in $\{0,+1,-1\}$. We define a walk $(T_i)_{0 \leq i \leq n} := (S_i,K_i)_{0 \leq i \leq n}$ on $\Z^2$ as follows: its first coordinate is a uniform element of $\Walks(n)$, $K_0=0$ and, for any $0 \leq i \leq n-1$, $K_{i+1}-K_i = \mathds{1}_{S_{i+1}-S_i=0}$. In other words, the second coordinate of $T$ enumerates the steps $0$ in the walk $S$. It is clear by definition that $(T_i)_{0 \leq i \leq n}$ is a random walk on $\Z^2$ starting from $(0,0)$, with i.i.d. jumps $Y_1, \ldots, Y_n$ whose distribution is the following:
\begin{align*}
\P \left( Y_1=(1,0) \right) = \P \left( Y_1=(-1,0) \right) = \P \left( Y_1 = (0,1) \right) = \frac{1}{3}.
\end{align*}
In particular, $Y_1$ has respective mean and covariance matrix
$$M = \begin{pmatrix}
0 \\
1/3
\end{pmatrix}
\text{ and }
\Sigma=
\left(\begin{matrix}
2/3 & 0\\
0 & 2/9
\end{matrix}\right)$$

We want to prove the functional convergence of the walk $S$, along with the process $(K_i)_{0 \leq i \leq n}$ counting the number of ``$0$'' jumps in the walk, conditionally on $K_n$ being even and nonzero. Since, clearly, $\P \left( K_n = 0 \right) = o(\P(K_n = 0 \mod 2))$, we only need to condition $K_n$ to be even.

In what follows, we define $(S_u)_{u \in [0,n]}$ (resp.~$(K_u)_{u \in [0,n]}$) as the linear interpolation of $(S_i)_{0 \leq i \leq n}$ (resp.~$(K_i)_{0 \leq i \leq n}$) on the whole interval.

\begin{proposition}
\label{prop:convergence}
The following convergence holds in distribution, in $\mathcal{C}([0,1],\R^2)$:
\begin{align*}
\left( \left(\frac{S_{cn}}{\sqrt{n}}, \frac{K_{cn}-cn/3}{\sqrt{n}} \right)_{0 \leq c \leq 1} \Big| K_n=0 \mod 2 \right) \underset{n \rightarrow \infty}{\overset{(d)}{\longrightarrow}} \left( W_c^{(1)}, W_c^{(2)} \right)_{0 \leq c \leq 1}
\end{align*}
where $W^{(1)}, W^{(2)}$ are independent Brownian motions of respective variances $2/3$ and $2/9$.
\end{proposition}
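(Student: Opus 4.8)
The plan is to obtain the unconditioned invariance principle first, and then to argue that conditioning on $\{K_n \text{ even}\}$ perturbs neither the limit nor, asymptotically, its probability weight. For the unconditioned part, I would note that $(T_i)_{0\le i\le n}=(S_i,K_i)$ is a random walk in $\Z^2$ with i.i.d.\ increments $Y_1,\dots,Y_n$ of mean $M=(0,1/3)$ and covariance $\Sigma=\mathrm{diag}(2/3,2/9)$. The multidimensional invariance principle (Donsker's theorem) then gives the convergence in $\mathcal{C}([0,1],\R^2)$ of $\bigl(S_{cn}/\sqrt n,\,(K_{cn}-cn/3)/\sqrt n\bigr)_{0\le c\le 1}$ towards a two-dimensional Brownian motion of covariance $\Sigma$; since $\Sigma$ is diagonal, its two coordinates are independent Brownian motions $W^{(1)},W^{(2)}$ of variances $2/3$ and $2/9$. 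This is exactly the statement of the proposition with the conditioning removed, so the whole difficulty lies in the conditioning.

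To handle it, I would use the identity $\indset{K_n \text{ even}}=\tfrac12\bigl(1+(-1)^{K_n}\bigr)$. Since $K_n$, the number of $0$-steps, is $\mathrm{Bin}(n,1/3)$-distributed, one computes $\E[(-1)^{K_n}]=(1/3)^n$, hence $\P(K_n\text{ even})=\tfrac12(1+3^{-n})\to\tfrac12$. For a bounded continuous functional $F$ of the rescaled path $X^{(n)}$, this yields
\[
\E\!\left[F(X^{(n)})\mid K_n\text{ even}\right]=\frac{\tfrac12\,\E[F(X^{(n)})]+\tfrac12\,\E\!\left[F(X^{(n)})(-1)^{K_n}\right]}{\tfrac12(1+3^{-n})}.
\]
The first term in the numerator converges to $\tfrac12\,\E[F(W^{(1)},W^{(2)})]$ by the unconditioned convergence, so it remains to prove that the oscillating correction $\E[F(X^{(n)})(-1)^{K_n}]$ vanishes.

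This vanishing is where I would invoke the Fourier estimate underlying the local limit theorem. By tightness---which the conditioned family inherits from the unconditioned one, the conditioning event having probability bounded away from $0$---it suffices to establish convergence of the finite-dimensional marginals, i.e.\ to treat $F$ of the form $\prod_j e^{i\langle \xi_j,\Delta_j\rangle}$ where the $\Delta_j$ are the rescaled increments of $X^{(n)}$ along a fixed subdivision $0=c_0<\cdots<c_m=1$. These increments are independent across blocks, and $(-1)^{K_n}$ factorizes accordingly; block $j$ then contributes the factor $\widehat\mu\bigl(\xi_j/\sqrt n+(0,\pi)\bigr)^{N_j}$ with $N_j\approx (c_j-c_{j-1})n$, where $\widehat\mu(\theta)=\E[e^{i\langle\theta,Y_1\rangle}]$. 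The key point is that $\widehat\mu(0,\pi)=\tfrac13(1+1-1)=\tfrac13$ has modulus strictly below $1$; by continuity each block factor decays exponentially in $n$, so the product, and hence $\E[F(X^{(n)})(-1)^{K_n}]$, tends to $0$. Thus the conditioned finite-dimensional marginals converge to those of $(W^{(1)},W^{(2)})$, and tightness upgrades this to the functional convergence.

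The main obstacle, and the reason the local limit theorem cannot be applied off the shelf, is the \emph{periodicity} of the walk: every step changes $S+K$ by $\pm1$, so $S_i+K_i\equiv i\pmod 2$ and $Y_1$ fails the aperiodicity hypothesis of Theorem~\ref{thm:multivariatelltart1}; in particular $\{K_n\text{ even}\}$ is the same event as $\{S_n\equiv n\bmod 2\}$. The decoupling above is precisely what shows that this parity constraint, a microscopic phenomenon sitting at the Fourier point $(0,\pi)$, becomes asymptotically independent of the macroscopic Gaussian fluctuations, which are read at frequencies $\xi/\sqrt n\to 0$. Making this vanishing robust enough to survive the passage from finite-dimensional to functional convergence is the delicate step; once it is secured, the invariance principle assembles the limit.
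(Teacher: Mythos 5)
Your proof is correct, but it takes a genuinely different route from the paper's at the one place where there is real work to do, namely removing the conditioning. The paper derives the unconditioned convergence from the local limit theorem (Theorem~\ref{thm:multivariatelltart1}) together with a theorem of Kallenberg, then handles the conditioning by applying the Markov property at an intermediate time $nu$, $u<1$: the conditional expectation becomes an unconditioned one weighted by $\varphi_{n-\lfloor nu\rfloor}(K_{\lfloor nu\rfloor})/\varphi_n(0)\to 1$, which yields the convergence only on $[0,u]$, and tightness on $[u,1]$ must then be recovered separately through a time-reversal argument plus a conditional local estimate for $K_n$. You instead get the unconditioned part from the multidimensional Donsker theorem, and treat the conditioning via the identity $\indset{K_n \text{ even}}=\tfrac12\left(1+(-1)^{K_n}\right)$: since $K_n\sim\Bin(n,\tfrac13)$ gives $\P(K_n \text{ even})=\tfrac12(1+3^{-n})$, everything reduces to showing that the oscillating term $\E[F(X^{(n)})(-1)^{K_n}]$ vanishes, which you do for finite-dimensional test functionals by factorizing over time blocks and using that the characteristic function $\widehat{\mu}$ of $Y_1$ satisfies $|\widehat{\mu}(0,\pi)|=\tfrac13<1$; tightness of the conditioned laws on all of $[0,1]$ is inherited for free from the unconditioned walk because the conditioning event has probability bounded away from $0$. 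Your route thus treats the endpoint $c=1$ on the same footing as the interior and eliminates the paper's two-stage structure. It also buys something more: as you correctly point out, $S_i+K_i\equiv i \pmod 2$, so $Y_1$ is \emph{not} aperiodic in the sense required by Theorem~\ref{thm:multivariatelltart1}, and the uniform asymptotics the paper states for $\P(S_n=\lfloor a\sqrt{n}\rfloor, K_n=\lfloor n/3+b\sqrt{n}\rfloor)$ cannot hold as written (the probability vanishes off a coset of an index-two sublattice, and on that coset the correct asymptotic carries an extra factor $2$); the paper's conclusion survives because only the integrated, central-limit consequence is used, but your argument avoids the issue entirely by never invoking a local limit theorem for the two-dimensional walk. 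Both proofs ultimately rest on the same elementary parity computation for the binomial $K_n$; what the paper's decomposition buys in exchange is that it never has to manipulate characteristic functions of the conditioned walk, only ratios of parity probabilities.
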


The whole proof of this proposition is highly inspired from the one of \cite[Lemma~$4.1$]{The18}. Let us start with a result on the corresponding unconditioned random walk:

\begin{equation}
\label{eq:expect0}
\left( \frac{S_{cn}}{\sqrt{n}}, \frac{K_{cn}-cn/3}{\sqrt{n}} \right)_{0 \leq c \leq 1} \underset{n \rightarrow \infty}{\overset{(d)}{\longrightarrow}} \left( W^{(1)}_c, W^{(2)}_c \right)_{0 \leq c \leq 1}
\end{equation}

This result is a consequence of Theorem~\ref{thm:multivariatelltart1}. Indeed, by \cite[Theorem~16.14]{Kal02}, it is enough to check that the one-dimensional convergence holds for $c=1$. One gets this from Theorem~\ref{thm:multivariatelltart1}. Uniformly for $a, b$ in a compact subset of $\R$:
\[\P(S_n= \lfloor a \sqrt{n} \rfloor, K_n=\lfloor n/3+b \sqrt{n}\rfloor)  \quad \mathop{\sim}_{n \rightarrow \infty} \quad \frac{1}{{2 \pi n \sqrt{\det \Sigma}}}  e^{-\frac{1}{2} \left( \frac{2}{3} a^2 + \frac{2}{9} b^2 \right)}.\]
This implies (see e.g.~\cite[Theorem~7.8]{Bil68}) that $({S_{ n }}/{\sqrt{n}}, ({K_{n} -  n/3})/{\sqrt{n}})$ converges in distribution to
$(W^{(1)}_1, W^{(2)}_1)$. The convergence~\eqref{eq:expect0} follows.

We now want a conditioned version of~\eqref{eq:expect0}, taking into
account the fact that $K_n$ has to be even. To this end, take $0<u<1$
and take $F: \mathcal{C}([0,u],\R^2) \rightarrow \R$ a bounded
continuous functional.
Set
$$E_n := \E\left[F\left( \frac{S_{cn}}{\sqrt{n}},
    \frac{K_{cn}-cn/3}{\sqrt{n}} \right)_{0 \leq c \leq u} \;\middle|\
  K_n=0 \mod 2\right].$$
Setting $\varphi_n(i) = \P (K_n=i \mod 2)$ and observing that the (unconditioned) walk until time $nu$ is independent of the walk between $nu$ and $n$, one can write:
\begin{equation}
\label{eq:expect1}
E_n = \E\left[F\left( \frac{S_{cn}}{\sqrt{n}}, \frac{K_{cn}-cn/3}{\sqrt{n}} \right)_{0 \leq c \leq u}\frac{\varphi_{n-\lfloor nu \rfloor}(K_{\lfloor nu \rfloor})}{\varphi_n(0)}\right]
\end{equation}

In order to estimate this quantity, simply remark that $K_n$ is
distributed as a binomial $\Bin_n$ of parameters $(n,1/3)$. Now,
remark by a simple computation that $\P(\Bin_n = 0 \mod 2) + \P(\Bin_n
= 1 \mod 2) = 1$, and $\P(\Bin_n = 0 \mod 2) - \P(\Bin_n = 1 \mod 2) =
3^{-n}$, which implies that $\varphi_n(0)$ and $\varphi_n(1)$ both converge to $1/2$ as
$n \rightarrow \infty$.

Thus,~\eqref{eq:expect1} can be rewritten:

\begin{align}
\label{eq:expect2}
E_n &= \E\left[F\left( \frac{S_{cn}}{\sqrt{n}}, \frac{K_{cn}-cn/3}{\sqrt{n}} \right)_{0 \leq c \leq u}\frac{1/2+o(1)}{1/2+o(1)}\right] \nonumber \\
&= \E\left[F\left( \frac{S_{cn}}{\sqrt{n}}, \frac{K_{cn}-cn/3}{\sqrt{n}} \right)_{0 \leq c \leq u}\right] + o(1)
\end{align}
and we get Proposition~\ref{prop:convergence} on $[0,u]$. In order to extend it to the whole interval $[0,1]$, it now suffices to show that the process is tight on $[0,1]$.

\begin{proof}[Proof of tension on the whole interval]
The convergence~\eqref{eq:expect2} shows notably that, conditionally
to the fact that $K_n=0 \mod 2$, the process
\begin{equation}
  \label{eq:process1}( {S_{cn}}/{\sqrt{n}}, ({ K_{cn} - cn/3})/{\sqrt{n}})_{0 \leq c \leq
    1}
\end{equation}
is tight on $[0,u]$ for every $u \in (0,1)$. To show that it is in
addition tight on $[u,1]$, we only need to check that, for $u \in
(0,1)$, the process
$$( {S_{ n-cn }}/{\sqrt{n}}, ({ K_{n-cn} - n(1-c)/3})/{\sqrt{n}})_{0
  \leq c \leq u}$$
is tight  conditionally on $K_n=0 \mod 2$. For this, we use the invariance of the process by time-reversal: the process $(\widehat{S}_i,\widehat{K}_i)_{0 \leq i \leq n}:=(S_n-S_{n-i},K_n-K_{n-i})_{0 \leq i \leq n}$ has the same distribution as $(S_i,K_i)_{0 \leq i \leq n}$, and this is also true under the condition that $K_n=0 \mod 2$. By definition, we can write
\begin{multline}
\left( \frac{S_{n-cn}}{\sqrt{n}}, \frac{ K_{n-cn} - n(1-c)/3}{\sqrt{n}}\right)_{0 \leq c \leq u}= \\
\left( \frac{\widehat{S}_n-\widehat{S}_{cn}}{\sqrt{n}}, \frac{\widehat{K}_n- n/3}{\sqrt{n}}- \frac{\widehat{K}_{cn}-cn/3}{\sqrt{n}}  \right)_{0 \leq c \leq u}.
\end{multline}

Now, letting $\sigma^2 := 2n/9$ be the variance of
$K_1$, we obtain that, uniformly for $b$ in a compact subset of $\R$,
$$\P(K_n=\lfloor n/3+b \sqrt{n}\rfloor \, \big| \, K_n=0 \mod 2) =
\frac{2}{\sqrt{2 \pi n} \sigma} e^{-\frac{b^2}{2\sigma^2}} + o\left(\frac{1}{\sqrt{n}}\right)$$
as $n \rightarrow \infty$.  This implies that, conditionally on $K_n=0
\mod 2$, $(K_n - n/3)/\sqrt{n}$ converges in distribution. Hence, by
\eqref{eq:expect2}, the initial process~\eqref{eq:process1} is tight on $[u,1]$  conditionally on $K_n = 0 \mod 2$.

Finally, the process is tight on $[0,1]$. Furthermore, the convergence of the finite-dimensional marginals is just a consequence of~\eqref{eq:expect2}. This put together implies Proposition~\ref{prop:convergence}.
\end{proof}

We can now prove the main result of this section, Theorem
\ref{thm:refinedconvergence}, by translating Proposition
\ref{prop:convergence} in terms of folded realizable words. For this,
we make use of the following lemma, which relates the behaviour of a
folded realizable word in $\hat{\cW}^+_n$ to the behaviour of the associated element of $\Walks^+(n)$.

\begin{lemma}[From the walk to the word]
\label{lem:walktoword}
Let $\hat{\underline{w}}$ be a folded realizable word in $\hat{\cW}^+_n$ and $(S_i,K_i)_{0 \leq i \leq n}$ be the associated walk on $\Z^2$. For any $i \geq 0$, denote by $\alpha_i$ (resp.~$\beta_i, \gamma_i, \delta_i$) the number of occurrences of $11$ (resp.~$00,10,01$) in the word $\hat{\underline{w}}$ up to position $i$. Then the following holds.
For any $i \geq 0$, any $a \in \Z$, any $p\geq0$:
$$\left\{
    \begin{array}{ll}
        S_i = a \\
        K_i= p
    \end{array}
\right.
\Longleftrightarrow
\left\{
    \begin{array}{ll}
        \alpha_i= \lfloor \frac{p+1}{2} \rfloor, \beta_i = \lfloor \frac{p}{2} \rfloor\\
    \gamma_i= \frac{i-p+a}{2}, \delta_i = \frac{i-p-a}{2}.
    \end{array}
\right.$$
\end{lemma}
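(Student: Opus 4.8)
The plan is to translate back and forth between the four letter-counts $(\alpha_i,\beta_i,\gamma_i,\delta_i)$ and the two walk coordinates $(S_i,K_i)$ using three elementary bookkeeping identities together with one structural fact coming from interlacing. First I would record that, by the very definition of the walk, each letter $10$ contributes $+1$ to $S$, each $01$ contributes $-1$, and each of $00,11$ contributes $0$, while $K$ increments exactly on the zero-steps $00$ and $11$. Reading off the first $i$ letters gives the linear relations
\[
S_i=\gamma_i-\delta_i,\qquad K_i=\alpha_i+\beta_i,\qquad i=\alpha_i+\beta_i+\gamma_i+\delta_i.
\]

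From the first and third relations one immediately gets $\gamma_i+\delta_i=i-K_i$ and $\gamma_i-\delta_i=S_i$; under the hypotheses $S_i=a$ and $K_i=p$ this yields $\gamma_i=\tfrac{i-p+a}{2}$ and $\delta_i=\tfrac{i-p-a}{2}$, which are the claimed values. Conversely, plugging these expressions back in returns $S_i=\gamma_i-\delta_i=a$ and $\gamma_i+\delta_i=i-p$, so the $\gamma,\delta$ part of the equivalence is purely the solution of a $2\times 2$ linear system and requires nothing about realizability.

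The only genuine input is the determination of $\alpha_i$ and $\beta_i$ from $K_i=p$ alone, i.e. splitting the $p$ zero-steps into their $11$ and $00$ parts. Here I would invoke the interlacing property of the signature: recalling that in the folded word the letters $11$, $00$, and $\{10,01\}$ correspond respectively to the values $2$, $0$, and $1$ of the signature $\underline{\sigma}$, the interlacing condition forces the $2$'s and the $0$'s to alternate in cyclic order (exactly one $2$ strictly between any two cyclically consecutive $0$'s). Any linear reading of the word is an arc of this cyclic sequence, so the subsequence of zero-steps, read from position $1$, still strictly alternates between $11$ and $00$. Since $\hat{\underline{w}}\in\hat{\cW}^+_n$ means the first $11$ precedes the first $00$, this alternating subsequence starts with $11$; hence among the first $p$ zero-steps there are $\lfloor\tfrac{p+1}{2}\rfloor$ copies of $11$ and $\lfloor\tfrac{p}{2}\rfloor$ copies of $00$, giving $\alpha_i=\lfloor\tfrac{p+1}{2}\rfloor$ and $\beta_i=\lfloor\tfrac{p}{2}\rfloor$. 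The reverse implication follows since $\lfloor\tfrac{p+1}{2}\rfloor+\lfloor\tfrac{p}{2}\rfloor=p$ recovers $K_i=p$. The main (and essentially only) obstacle is this alternation step; everything else is elementary counting.
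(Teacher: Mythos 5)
Your proof is correct: the three bookkeeping identities $S_i=\gamma_i-\delta_i$, $K_i=\alpha_i+\beta_i$, $i=\alpha_i+\beta_i+\gamma_i+\delta_i$, together with the strict cyclic alternation of the letters $11$ and $00$ (forced by the interlacing of the signature and the equality of the numbers of $0$'s and $2$'s) and the normalization that words in $\hat{\cW}^+_n$ start their zero-steps with $11$, give exactly the stated formulas. The paper omits this proof as ``straightforward,'' and your argument is precisely the intended one, with the alternation step correctly identified as the only point where realizability is actually used.
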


This lemma, whose proof is straightforward, implies Theorem~\ref{thm:refinedconvergence}:

\begin{proof}[Proof of Theorem~\ref{thm:refinedconvergence}]
The proof just boils down to putting together Lemma~\ref{lem:walktoword} and Proposition~\ref{prop:convergence}. Indeed, Lemma~\ref{lem:walktoword} (keeping the same notation as in its statement) allows us to write for all $1 \leq i \leq n$:
\begin{equation}
  \begin{split}
    \alpha_i & = \frac{i}{6} +  \frac{K_i-i/3}{2} + c_1 = \beta_i + c_2 \\
\gamma_i & = \frac{i}{3} + \frac{S_i-(K_i-i/3)}{2} \\ \delta_i &= \frac{i}{3} + \frac{-S_i-(K_i-i/3)}{2},
\end{split}
\end{equation}
where $c_1, c_2$ are bounded in absolute value by $1$, independently of $n$ and $i$. This proves point \textit{(ii)} of the theorem by the convergence of Proposition~\ref{prop:convergence}. Using the fact that
$\sup_{0 \leq c \leq 1} |W_c^{(1)}|$, $\sup_{0 \leq c \leq 1} |W_c^{(2)}|$ are bounded in probability, point \textit{(i)} follows.
\end{proof}

\begin{remark}
\label{rem:braceletnecklace}
Notice that the conclusions of Theorems~\ref{thm:convergence} and~\ref{thm:refinedconvergence} still
hold when one considers a uniform realizable bracelet instead of a
uniform realizable word. Indeed, one just has to check that, with
probability going to $1$ as $n \rightarrow \infty$, the equivalence class up to shift and reversal of a uniform
realizable word of size $2n$ has cardinality $2n$. To see this, first recall that we only need to deal with the first $n$ letters of a realizable word of size $2n$, then
remark that a word of length $n$ equal to one of its cyclic shifts is necessarily
periodic, of period at most $n/2$. Thus, there are at most $3^{n/2}$
realizable words with fixed period. Summing over all possible periods,
there are at most $n 3^{n/2}$ such words, which is $o(\# \cW_n)$. Furthermore, a word of length $n$ which is equal to its reversal is determined by its first $n/2$ letters, hence there are at most $n3^{n/2}$ words whose reversal may be equal to one of their shifts. The result follows.
\end{remark}

\section*{Acknowledgements}

We thank Andrew Howroyd for his interest in the sequence $(\# \calB_n)$ and his proof of Corollary~\ref{cor:howroyd}. We also thank anonymous referees for their careful reading of a submitted version of this paper and their suggestions for improvements. SR acknowledges the support of the Fondation Sciences Math\'ematiques de Paris. PT acknowledges partial support from Agence Nationale de la Recherche, Grant Number ANR-14-CE25-0014 (ANR GRAAL).

\Addresses
\end{document}